\documentclass[12pt,reqno]{amsart}
\usepackage{hyperref}
\usepackage{cleveref}
\usepackage{amsmath,amssymb, amscd, amsfonts, amsthm}
\usepackage{yhmath}
\usepackage[english]{babel}
\usepackage{caption}
\usepackage{euscript,enumerate,tikz}
\usepackage[margin=1in]{geometry}
\usepackage{subcaption}
\usepackage{graphicx}
\usepackage{float}
\usepackage{pgf,tikz}
\usepackage{mathrsfs}
\usepackage{upgreek}
\usepackage[normalem]{ulem}

\def\CLL {{\mathcal L}}
\def\BFC {{\mathbf C}}
\def\BBP {{\mathbb P}}
\def\BBC {{\mathbb C}}
\def\ep {{\epsilon }}

\newcommand\sing{\operatorname{Sing}}

\newtheorem{theorem}{Theorem}[section]
\newtheorem{definition}[theorem]{Definition}

\newtheorem{question}[theorem]{Question}

\theoremstyle{definition}

\newtheorem{example}[theorem]{Example}

\begin{document}
\title[On induced cycles of Levi graphs]{On induced cycles of Levi graphs associated to line arrangements}
\author[Rupam Karmakar]{Rupam Karmakar}
\email{rupammath91@gmail.com}
\address{Stat-Math Unit, Indian Statistical Institute \newline \indent 203 B.T. Road, Kolkata--700108, India.}
\author[Rajib Sarkar]{Rajib Sarkar}
\email{rajib.sarkar63@gmail.com}
\address{Department of Mathematics, National Institute of Technology Warangal \newline \indent Hanamkonda, Telangana-- 506004, India.}

\begin{abstract}
In this article, we investigate the existence of induced cycles in Levi graphs associated to line arrangements in $\mathbb{P}_{\mathbb{C}}^2$. We also look at the problem of finding the length of a longest induced cycle in Levi graphs associated to line arrangements.

\end{abstract}

\keywords{Line Arrangements, Bipartite graphs, Levi graphs, Induced cycles}
\thanks{AMS Subject Classification (2020): 14N10, 14N20, 05C38, 05C10, 05E14}
\maketitle
\section{Introduction}
Over the past several decades the problem of finding a longest induced cycle in a simple graph has garnered much attention in fields such as Combinatorics and Computer science. Erd\H os \cite{Erdos92} mentioned this problem in a more general setting; he talked about the problem of finding a largest induced $r$-regular subgraph in a given graph. Later on, it has been shown that this problem is NP-hard by Cardoso et al. in \cite{CKL07}. Since a cycle is a $2$-regular subgraph, the problem of determining the length of longest induced cycles in a simple graph is NP-hard. Also, it can be shown that finding a longest induced cycle in any simple graph is equivalent to finding a longest induced cycle in any bipartite graph. Let $G$ be a simple graph with the vertex set $V(G)$ and the edge set $E(G)$. We can construct a new bipartite graph $G'$ with the vertex set $V(G')=V(G)\sqcup \{x_{uv} \ | \ \{u,v\}\in E(G)\}$ and the edge set $E(G')=\{\{u,x_{uv}\},\{x_{uv},v\} \ | \ \{u,v\}\in E(G)\}.$ Then there is a path from $u$ to $v$ in $G$ if and only if there is a path from $u$ to $v$ in $G'$. Therefore, a longest induced cycle in $G'$ gives a longest induced cycle in $G$ and vice versa. Several researchers has studied induced cycles in bipartite graphs and it is still an active area of research, see \cite{AA09,Adamus09,Jackson81,Jackson85,LN21} etc.

In \cite{Cox}, Coxeter introduced the notion of Levi graphs that are naturally associated to line arrangements (see Section \ref{Prelim} for the definition). More generally, Levi graphs are naturally associated to plane curve arrangements with ordinary singularities. Levi graphs are important as they encode various properties of intersection posets of curve arrangements with ordinary singularities. It can also be noted that Levi graphs are bipartite graphs. So, the study of finding longest induced cycles in Levi graphs associated to curve arrangements are worth to look at, as this is a significant subclass of bipartite graphs.

In \cite{KSS24}, the authors have studied the binomial edge ideals of Levi graphs associated to certain curve arrangements in the complex projective plane. We prove that the Levi graphs of almost all $d$-arrangements and conic-line arrangements contain an induced cycle of length $6$ and using this property, we obtain bounds for the Castelnuovo-Mumford regularity of powers of binomial edge ideals of Levi graphs coming from $d$-arrangements and conic-line arrangements. Hence, studying the existence of induced cycles in Levi graphs associated to curve arrangements yield lower bounds for the regularity of powers of the corresponding binomial edge ideals. 

Motivated by both these combinatorial and algebraic aspects, we focus our attention to the existence of induced cycles in Levi graphs associated to line arrangements. In \cite[Theorem 5.1]{KSS24}, the authors have proved that for line arrangements $\mathcal{L}$ of $k$ lines with $t_k=0$, the associated Levi graphs has induced cycles of length $6$. In this article, we study the existence of induced cycles of length $\geq 8$ in Levi graphs associated to line arrangements. For line arrangements with $t_k=t_{k-1} =0$, we show that (see Theorem \ref{thm: cycle of length 8}) the associated Levi graphs contain an induced cycle of length $8$. This result is optimal in the sense that there exists line arrangements with $t_k =0$ (see Example \ref{exam: near pencil}) such that the associated Levi graphs does not contain an induced cycles of length $\geq 8$. Now it is natural to ask whether the associated Levi graph of a line arrangement of $k$ lines with $t_k=t_{k-1}= t_{k-2} =0$ contains an induced cycle of length $10$. The answer is negative as we see in Example \ref{exam: not having cycle of length 10}. To study the existence of induced cycles of length $10$, we first consider line arrangements of $k$ lines with largest multiplicity $\geq \frac{k}{2}$ and assuming few additional conditions show the existence of induced cycles of length $10$ (see Theorem \ref{thm: cycles of length 10}). Next, we look into the cases where the largest multiplicity is $ \leq \frac{k}{2}$. We know that only line arrangements with $t_r = 0$ for all $r>2$ are generic line arrangements and in that case the associated Levi graphs contain induced cycles of all possible lengths. So, we consider line arrangements of $k$ lines with $t_3 \neq 0$ and $t_r=0$ for all $r > 3$ and show that the associated Levi graphs contain induced cycles of length $2i$ for $i \leq \lfloor {\frac{k+9}{4}} \rfloor$ if $k$ is odd and for $i \leq  \min \{\lfloor {\frac{k+11}{4}} \rfloor, \lfloor {\frac{2k+16}{7}} \rfloor \}$ if $k$ is even (see Theorem \ref{thm: max t_3}). We generalize this result in some specific settings in Theorem \ref{thm: max t_q} for line arrangements with $t_r \neq 0$ for some $r > 3$.

Geometric properties of a line arrangement fixes the combinatorial data, hence considering a particular class of line arrangement might give better estimate for the lengths of induced cycles in the associated Levi graphs. So, we now restrict ourselves to specific classes of line arrangements, e.g. Ceva's line arrangements and supersolvable line arrangements and study the induced cycles in the associated Levi graphs. However, determining the length of the longest induced cycles even for a particular class of line arrangements seems very difficult. Therefore, we consider the following few examples of line arrangements with varying combinatorial data and obtain the lengths of longest induced cycles for the associated Levi graphs.

$\bullet$ Let $\mathcal{L}_{\{9\}}$ be a $(9)_3$ arrangement in $\mathbb{P}_{\mathbb{C}}^2$. We show that the length of a longest induced cycle in the associated Levi graph is $14$ (see Example \ref{exam: nine lines with t_3}).

$\bullet$ We add a line to the $(9)_3$ arrangement and get a line arrangement $\mathcal{L}_{\{10\}}$ of $10$ lines with $t_2 = 9$ and $t_3 = 12$. We show that the length of a longest induced cycle in the associated Levi graph is $18$ ( see Example \ref{exam: ten lines with t_2 and t_3}).

$\bullet$ Let $\mathcal{L}_{\mathcal{H}}$ be the Hesse arrangement of lines in $\mathbb{P}_{\mathbb{C}}^2$ i.e. $\mathcal{L}_{\mathcal{H}}$ is a line arrangement of $12$ lines with $t_2 =12, t_4 =9$ and $t_r =0$ elsewhere. We show that the length of a longest induced cycle in the associated Levi graph is $12$ ( see Theorem \ref{thm: hesse arrangement}).

$\bullet$ Let $\mathcal{A}$ be a supersolvable line arrangement with two modular points. We compute the length of a longest induced cycle in the associated Levi graph for some cases ( see Theorem \ref{thm: supersolvable k < m-2}).

\section{Acknowledgment}
The first author thanks the National Board for Higher Mathematics (NBHM), Department of Atomic Energy, Government of India for financial support through the postdoctoral fellowship.

\section{Preliminaries}\label{Prelim}
\subsection{Line arrangements and Levi graphs}
   We assume the underlying field to be $\mathbb{C}$, as all the examples used in the article come from complex line arrangements. However, most of the results are true when the underlying field $K$ is aribitrary.
 We recall some basic notation, definitions, and known results that will be used throughout this article.

First, we recall the notion of a line arrangement in the complex projective plane. A \textit{line arrangement} is simply a finite collection of $k$-distinct lines $\{\ell_{1}, \dots , \ell_{k}\}$ in the complex projective plane and is denoted by $\mathcal{L}$. The set of intersection points of $\mathcal{L}$ is denoted by ${\rm Sing}(\mathcal{L})$.


We have the following combinatorial count for a line arrangement $\mathcal{L}$: 
\begin{align}
    \binom{k}{2}= \sum_{p \in \sing(\mathcal{L})} \binom{m_p}{2},
\end{align}
where $m_p$ denotes the multiplicity at $p$ i.e., the number of lines in $\mathcal{L}$ passing through $p \in \sing(\mathcal{L}).$ 
Also, we have for every $\ell_i\in \mathcal{L}$, 
\begin{align}\label{comb count}
    k-1=\sum_{p\in \sing(\mathcal{L})\cap \ell_i}(m_p-1).
\end{align}
Let $t_r(\mathcal{L})$ be the number of $r$-fold points in $\sing{(\mathcal{L})}$ that is the number of points where exactly $r$ lines from $\mathcal{L}$ meet. So, if $s(\mathcal{L})$ is the total number of intersection points in $\mathcal{L}$, then $s(\mathcal{L})= \sum_{r \geq 2}t_r(\mathcal{L})$. We sometimes write $s, t_r$ in place of $s(\mathcal{L}), t_r(\mathcal{L})$ respectively if there is no confusion about $\mathcal{L}$. 

A point $p \in \sing(\mathcal{L})$ is said to be a modular point if for any other point $p' \in \sing(\mathcal{L}) $, the line joining $p$ and $p'$ is $\ell_i \in \mathcal{L}$ for some $i \in \{1, \dots, k\}$.

\vspace{2mm}

We now recall the notion of Levi graphs for line arrangements.
\begin{definition}\label{df: d-arr}
Let $\mathcal{L}=\{\ell_{1}, \dots , \ell_{k}\} \subset \mathbb{P}^{2}_{\mathbb{C}}$ be a line arrangement with $|\sing(\mathcal{L})|=s$. Then the associated \emph{Levi graph} $G = (V,E)$ is a bipartite graph with  $V : = V_{1} \cup V_{2} =$ 
$\{x_{1}, \dots , x_{s}, y_{1}, \dots , y_{k}\}$,  where each vertex $x_i$ corresponds to an intersection point $p_i \in \sing(\mathcal{L})$, each vertex $y_j$ corresponds to the line $\ell_j$ in $\mathcal{L}$ and vertices $x_i$, $y_j$ are joined by an edge in $E$ if and only if $p_i$ is incident with $\ell_j$. 
\end{definition}

\subsection{Some basics from graph theory}
 Let $G$ be a simple graph with the vertex set $V(G)$ and the edge set $E(G)$. A subgraph $G_{sub}$ of $G$ is said to be an \textit{induced subgraph} if for all $u, v \in V(G_{sub})$ such that $\{u,v\} \in E(G)$ implies that $\{u,v\} \in E(G_{sub})$. For a vertex $v$ in $G$,
$N_G(v) := \{u \in V(G) :  \{u,v\} \in E(G)\}$ denotes the
\textit{neighborhood} of $v$ in $G$. The \textit{degree} of a vertex  $v$, denoted by $\deg_G(v)$, is
$|N_G(v)|$. A \textit{cycle} is a connected graph $G$ with $\deg_G(v)=2$ for all $v\in V(G)$.  We denote the cycle on $n$ vertices by $\mathbf{C}_n$ for $n\geq 3$. In particular, $\mathbf{C}_3$ is triangle and $\mathbf{C}_4$ is square. Note that by a cycle we mean it is an induced cycle.


\section{Induced cycle of Levi graphs}\label{Sec: induced cycle}

We have seen in \cite[Theorem 5.1]{KSS24} that for a line arrangement $\mathcal{L}$ of $k$ lines in $\mathbb{P}_{\mathbb{C}}^2$ with $t_k=0$, the associated Levi graph $G$ has an induced $\BFC_{6}$. But in general the associated Levi graphs of line arrangements of $k$ lines with $t_k=0$ doesn't contain induced cycles of length $\geq 8$ as we observe in the following example.

\begin{example} \label{exam: near pencil}
Let $\mathcal{L} = \{ \ell_1, \dots, \ell_k \}$ be a near pencil of $k$ lines i.e., a line arrangement of $k$ lines with $t_2 = k-1, t_{k-1}=1$ and $t_i =0$ elsewhere. Let, $\sing(\mathcal{L}) \cap \ell_k \cap \ell_i$ be double points for $ i = 1, \dots , k-1$ and $\sing(\mathcal{L}) \cap \ell_1 \cap \dots \cap \ell_{k-1}$ be the intersection point of multiplicity $k-1$. Now if possible assume that $\ell_{j_1}p_{j_1j_2}\ell_{j_2} \dots \ell_{j_i}p_{j_ij_1}\ell_{j_1}$ corresponds to an induced cycle of length $2i$ for $i \geq 4$ in the associated Levi graph $G$ of $\mathcal{L}$ where $p_{j_{i_1}j_{i_2}} \in \sing(\mathcal{L}) \cap \ell_{j_{i_1}} \cap \ell_{j_{i_2}}, j_{i_1},j_{i_2} \in \{j_1, \dots, j_i\}$. Since all the double points belong to $\sing(\mathcal{L}) \cap \ell_k$, $\ell_k = \ell_{j_t}$ for some $t \in \{1, \dots, i\}$. Without loss of generality, assume that $\ell_k = \ell_{j_1}$. Then $\{j_2, j_3, j_4\} \subset \{1, \dots, k-1\}$. But that would imply that $p_{j_2j_3} = p_{j_3j_4}$, a contradiction. So, $G$ does not contain an induced cycle of length $\geq 8$.
\end{example}

In this section, we study the existence of induced cycles of length $\geq 8$ in $G$. First we show that, if both $t_{k-1}$ and $t_k$ are zero in $\mathcal{L}$, the associated Levi graph $G$ has an induced $\BFC_8$.
\begin{theorem}\label{thm: cycle of length 8}
    Let $\mathcal{L} = \{\ell_1, \dots \ell_k\} \subseteq \mathbb{P}_{\mathbb{C}}^2$ be a line arrangement with $t_k = t_{k-1} = 0$. Then the associated Levi graph has an induced $\BFC_8$.
\end{theorem}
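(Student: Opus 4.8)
The plan is to reduce the statement to a purely incidence-theoretic claim: it suffices to produce four lines of $\mathcal{L}$ no three of which are concurrent. Indeed, suppose $\ell_a,\ell_b,\ell_c,\ell_d$ are four such lines, and write $x_{uv}$ for the vertex of the Levi graph $G$ corresponding to the point $\ell_u\cap\ell_v$. I claim that
\[
y_a - x_{ab} - y_b - x_{bc} - y_c - x_{cd} - y_d - x_{da} - y_a
\]
is an induced $\BFC_8$. First, the four points $\ell_a\cap\ell_b,\ \ell_b\cap\ell_c,\ \ell_c\cap\ell_d,\ \ell_d\cap\ell_a$ are pairwise distinct, since any coincidence among them would force a single point to lie on three of the four lines. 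Second, the subgraph is induced: the point $x_{ab}$ lies on $\ell_c$ (resp. $\ell_d$) only if $\ell_a,\ell_b,\ell_c$ (resp. $\ell_a,\ell_b,\ell_d$) are concurrent, which is excluded, so each $x_{uv}$ is adjacent, among these eight vertices, only to $y_u$ and $y_v$. Since every edge of $G$ joins a point-vertex to a line-vertex, this determines the full induced subgraph, which is exactly the displayed $8$-cycle. Thus everything rests on the existence of four lines in general position.

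Next I would prove the incidence claim: if $t_k=t_{k-1}=0$, then $\mathcal{L}$ contains four lines no three of which are concurrent. I would argue the contrapositive by establishing the classification that \emph{if no four lines of $\mathcal{L}$ are in general position, then either all $k$ lines pass through a common point, or all but one do}. In the first case $\sing(\mathcal{L})$ contains a point of multiplicity $k$, so $t_k\neq 0$; in the second it contains a point of multiplicity $k-1$, so $t_{k-1}\neq 0$. Either conclusion contradicts the hypothesis and finishes the proof (we may assume $k\geq 4$, as the hypothesis $t_k=t_{k-1}=0$ is impossible for $k\leq 3$, where two lines always meet).

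The heart of the argument, and the step I expect to be the main obstacle, is the classification itself. If every three lines of $\mathcal{L}$ are concurrent then the arrangement is a pencil and we are in the first case; otherwise fix three lines $\ell_1,\ell_2,\ell_3$ forming a genuine triangle with distinct vertices $P_{12},P_{23},P_{13}$. For any further line $\ell$, applying the failure of general position to $\{\ell_1,\ell_2,\ell_3,\ell\}$ and using that $\ell_1,\ell_2,\ell_3$ are not concurrent forces $\ell$ to pass through one of the three vertices; moreover a line through two of the vertices must itself be one of $\ell_1,\ell_2,\ell_3$, so each additional line passes through \emph{exactly one} vertex. It then remains to rule out the possibility that two additional lines $\ell,\ell'$ meet distinct vertices, and this is the delicate point: choosing the appropriate four lines among $\{\ell_1,\ell_2,\ell_3,\ell,\ell'\}$ (for instance $\ell_1,\ell_3,\ell,\ell'$ when $\ell$ passes through $P_{12}$ and $\ell'$ through $P_{23}$) and checking directly that no three of them are concurrent contradicts the standing assumption. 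Hence all additional lines pass through a single common vertex, so all lines but one are concurrent, giving the second case and completing the classification.
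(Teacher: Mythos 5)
Your proposal is correct, and it reaches the same reduction as the paper --- an induced $\BFC_8$ comes from four lines no three of which are concurrent --- but it establishes the existence of such a quadruple by a genuinely different route. The paper argues constructively: it takes a point $p$ of maximal multiplicity $q\leq k-2$, chooses $\ell_1,\ell_2$ through $p$ and $\ell_{j_1},\ell_{j_2}$ from the remaining lines, and runs a short case analysis (swapping in $\ell_3$ when one of $\ell_1,\ell_2$ happens to pass through $\ell_{j_1}\cap\ell_{j_2}$) to arrange that no three of the four chosen lines are concurrent. You instead prove the contrapositive via a clean classification: an arrangement with no four lines in general position is a pencil or a near-pencil, hence has $t_k\neq 0$ or $t_{k-1}\neq 0$. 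Your verification of that classification is sound --- in particular the delicate triple $\ell_3,\ell,\ell'$ in your chosen quadruple cannot be concurrent because $\ell'\cap\ell_3=P_{23}$ while $\ell\cap\ell_3\neq P_{23}$, since $\ell$ meets exactly one vertex of the triangle --- and the induced-subgraph check for the $8$-cycle is complete, as every potential chord $x_{uv}y_w$ would force three of the four lines through one point. What your approach buys is a sharper structural statement: it identifies pencils and near-pencils as precisely the arrangements lacking four lines in general position, which explains why Example \ref{exam: near pencil} is the only obstruction and makes the optimality of the hypothesis $t_k=t_{k-1}=0$ transparent. What the paper's construction buys is that it localizes the four lines relative to a maximal-multiplicity point, a setup it then reuses and extends in the proof of Theorem \ref{thm: cycles of length 10}. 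Your remark that $k\leq 3$ is incompatible with the hypothesis correctly disposes of the degenerate cases.
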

\begin{proof}
    Let $q$ be the largest integer such that $t_q \neq 0$ i.e., $t_q \neq 0$ and $t_j =0$ for $j > q$. Without loss of generality, assume that $\sing(\mathcal{L}) \cap \ell_1 \cap \dots \cap \ell_q \neq \emptyset$ and denote this point as $p$. Since $t_{k-1} = t_k =0$, we have $q \leq k-2$.

    Let us choose two lines $\ell_{j_1}$ and $\ell_{j_2}$ such that $j_1, j_2 \in \{ q+1, \dots, k\}$ and consider the lines $\ell_1, \ell_{j_1}, \ell_{j_2}$ and $\ell_2$. We denote the intersection points of $\ell_i$ and $\ell_j$ as $p_{ij}$, where both $i$ and $j$ belong to $\{q+1, \dots, k\}$ or one of $i, j$ belongs to $\{1, \dots, q\}$ and the other belongs to $\{q+1, \dots, k\}$.

   If both $\sing(\mathcal{L}) \cap \ell_{j_1} \cap \ell_{j_2} \cap \ell_1$ and $\sing(\mathcal{L}) \cap \ell_{j_1} \cap \ell_{j_2} \cap \ell_2$ are empty, $\ell_1, \ell_{j_1}, \ell_{j_2}, \ell_2$ corresponds to an induced cycle $\BFC_8$ in the following way.

     Since $\sing(\mathcal{L}) \cap \ell_{j_1} \cap \ell_{j_2} \cap \ell_1$ is empty, $\ell_1$ doesn't pass through $\sing(\mathcal{L}) \cap \ell_{j_1} \cap \ell_{j_2}$ and $\ell_{j_2}$ doesn't pass through $\sing(\mathcal{L}) \cap \ell_1 \cap \ell_{j_1}$. Similarly, $\sing(\mathcal{L}) \cap \ell_{j_1} \cap \ell_{j_2} \cap \ell_2 = \emptyset$ implies that $\ell_2$ doesn't pass through $\sing(\mathcal{L}) \cap \ell_{j_1} \cap \ell_{j_2}$ and $\ell_{j_1}$ doesn't pass through $\sing(\mathcal{L}) \cap \ell_2 \cap \ell_{j_2}$. Also by hypothesis, $\ell_{j_1}$ and $\ell_{j_2}$ don't pass through $\sing(\mathcal{L}) \cap \ell_1 \cap \ell_2$. So, $\ell_1p_{1j_1}\ell_{j_1}p_{j_1j_2}\ell_{j_2}p_{j_22}\ell_2p\ell_1$ corresponds to an induced $\BFC_8$ in the associated Levi graph.

       If one of $\sing(\mathcal{L}) \cap \ell_{j_1} \cap \ell_{j_2} \cap \ell_1$ or $\sing(\mathcal{L}) \cap \ell_{j_1} \cap \ell_{j_2} \cap \ell_2$ is non-empty, say $\sing(\mathcal{L}) \cap \ell_{j_1} \cap \ell_{j_2} \cap \ell_2$ is non-empty, we replace $\ell_2$ with $\ell_3$ and $\ell_1, \ell_{j_1}, \ell_{j_2}, \ell_3$ corresponds to an induced $\BFC_8$ in the associated Levi graph. Similarly, if $\sing(\mathcal{L}) \cap \ell_{j_1} \cap \ell_{j_2} \cap \ell_1$ is non-empty, we replace $\ell_1$ with $\ell_3$ and $\ell_3, \ell_{j_1}, \ell_{j_2}, \ell_2$ corresponds to an induced $\BFC_8$ in that case.
     \end{proof}

From \cite[Theorem 5.1]{KSS24} and Theorem \ref{thm: cycle of length 8} one might ask the following question:
 \begin{question}\label{qs: cycle of length 10}
For a line arrangement $\mathcal{L}$ of $k$ lines in $\mathbb{P}_{\mathbb{C}}^2$ with $t_k = t_{k-1} = t_{k-2} = 0$, does the associated Levi graph has an induced $\BFC_{10}$?
\end{question}

But this is not true as we observe in the example below.

\begin{example}\label{exam: not having cycle of length 10}
Let $\mathcal{L}$ be a line arrangement with two modular points $p_1$ and $p_2$ such that $a \geq 5$ lines pass the modular point $p_1$ and $b > a$  lines pass through the other modular point $p_2$. Then $k = a+ b-1 \geq 10$ and $t_2 = (a-1)(b-1), t_a=1, t_b = 1$.

Let, $q$ be the largest integar such that $t_q \neq 0$. Here $q=b$ since $b$ is the largest integer such that $t_b \neq 0$. Also, $t_{k -q+1} = t_a \neq 0$. We claim that the associated Levi graph of $\mathcal{L}$ does not have an induced cycle of dimension $\geq 10$.

Let $\{\ell_{j_1}, \dots, \ell_{j_i}\} \subset \mathcal{L}$ correspond to an induced $\mathbf{C}_{2i}$ for $i \geq 5$ in the associated Levi graph. Then at least three of the lines pass through one of the modular points, say $\ell_{j_{i_1}}, \ell_{j_{i_2}}, \ell_{j_{i_3}}$ pass through $p_1$. Then $\sing(\mathcal{L}) \cap \ell_{j_{i_1}} \cap \ell_{j_{i_2}} \cap \ell_{j_{i_3}} = \{p_1\}$. Hence, $\{\ell_{j_1}, \dots \ell_{j_i}\}$ can not correspond to an induced cycle of length $ 2i \geq 10$.
\end{example}

 \vskip 4mm
\begin{theorem}\label{thm: cycles of length 10}
    Let $\mathcal{L} = \{\ell_1, \dots, \ell_k \}$ be an arrangement of $k \geq 10$ lines in $\mathbb{P}_{\mathbb{C}}^2$ with $q$ the largest integer such that $t_q \neq 0$. Without loss of generality we may assume that $\sing(\mathcal{L}) \cap \ell_1 \cap \dots \cap \ell_q \neq \emptyset$.
    
    \vskip 2mm
    $(i)$ If $t_{k-q} = 0$, $t_{k-q+1} = 0$ and  $k \leq 2q-2$, the associated Levi graph $G$ has an induced $\mathbf{C}_{10}$.

 If $t_{k-q} \neq 0$, the associated Levi graph $G$ has an induced $\BFC_{10}$ if and only if  $\sing(\mathcal{L}) \cap \ell_{{q+1}}\cap \dots \cap \ell_k = \emptyset$.

 $(ii)$ For $k=2q-1$ with $t_{k-q} = 0$, the associated Levi graph $G$ has an induced $\BFC_{10}$ if and only if $t_q = 1$.

 $(iii)$ For $k=2q$, the associated Levi graph $G$ has an induced $\BFC_{10}$ if and only if $t_q=1$.

\end{theorem}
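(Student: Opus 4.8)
The plan is to reduce the statement to a geometric question about $\mathcal{L}$ and then settle each case by a counting argument. The governing dictionary, already implicit in the proof of Theorem~\ref{thm: cycle of length 8}, is that $G$ contains an induced $\mathbf{C}_{10}$ if and only if $\mathcal{L}$ contains five lines no three of which are concurrent. Indeed, five such lines in any cyclic order meet in five distinct double points, and the absence of a chord at each of these points is precisely the requirement that no third chosen line passes through it; conversely, since any three of five cyclically arranged lines contain two that are adjacent, three concurrent cycle-lines would force their common point to be a cycle-vertex through which a third line passes, i.e.\ a chord. Thus the theorem is equivalent to deciding, under each hypothesis, whether $\mathcal{L}$ contains five lines with no three concurrent.

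Two elementary facts drive every case. First, at most two lines through any fixed point of $\sing(\mathcal{L})$ can belong to a family with no three concurrent; in particular the maximal pencil $\ell_1,\dots,\ell_q$ through $p$ contributes at most two lines. Second, if every three of the non-pencil lines $\ell_{q+1},\dots,\ell_k$ were concurrent then all of them would pass through a common point, so a non-pencil triangle (three of them with no common point) exists exactly when $\ell_{q+1},\dots,\ell_k$ are not all concurrent. My standard construction for the existence (``if'') statements is then to take such a triangle together with two lines of the $p$-pencil: since each of the three triangle vertices lies on at most one pencil line, at most three pencil lines are forbidden, and because $q\geq 5$ in all three cases two admissible pencil lines remain, yielding five lines with no three concurrent, hence an induced $\mathbf{C}_{10}$.

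To certify the hypotheses of this construction I translate concurrency into multiplicity. If all the non-pencil lines passed through a single point $p''$, then $p''$ would have multiplicity $k-q$, increased to $k-q+1$ if one pencil line also passed through it. In the first part of $(i)$ the assumptions $t_{k-q}=t_{k-q+1}=0$ exclude both values, so the non-pencil lines are not concurrent, the triangle exists, and an induced $\mathbf{C}_{10}$ always occurs. For the second part of $(i)$ the same observation yields the stated equivalence: $\sing(\mathcal{L})\cap\ell_{q+1}\cap\cdots\cap\ell_k\neq\emptyset$ means exactly that the non-pencil lines are concurrent, and then every line of $\mathcal{L}$ lies in the pencil through $p$ or the pencil through $p''$, leaving at most $2+2=4$ admissible lines. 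For the ``if'' halves of $(ii)$ and $(iii)$ a concurrency point of all non-pencil lines would have multiplicity $q-1$ or $q$ when $k=2q-1$, and multiplicity $q$ when $k=2q$ (as $q+1$ exceeds the maximum); the hypotheses $t_{k-q}=0$ and $t_q=1$ forbid a second point of multiplicity $q$ and rule these out, so the triangle exists and $q\geq 5$ furnishes the two pencil lines.

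The ``only if'' directions rest on a two-pencil count. Assume $t_q\geq 2$ and fix points $p,p'$ of multiplicity $q$, with $A,B$ the lines through them; two points span a unique line, so $|A\cap B|\leq 1$, and an admissible family uses at most two lines from each pencil. When $k=2q-1$ we have $|A|+|B|=k+1$, forcing $|A\cap B|=1$ and $A\cup B=\mathcal{L}$, so only four lines are available and no induced $\mathbf{C}_{10}$ exists, which completes $(ii)$. The delicate point, which I expect to be the main obstacle, is the ``only if'' direction of $(iii)$, where $k=2q$. If $|A\cap B|=0$ the two pencils exhaust all $2q=k$ lines and the four-line bound again applies; but if $A$ and $B$ share $m:=\overline{pp'}$ then $|A\cup B|=2q-1$ and exactly one line $n$ lies outside both pencils. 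The count now permits a candidate family of two lines through $p$, two through $p'$, and $n$, so the obstruction is not numerical: one must show that $n$ cannot complete such a family without producing a triple point $\{a,b,n\}$ with $a\in A\setminus\{m\}$, $b\in B\setminus\{m\}$. I would therefore concentrate the hard work on the incidences of $n$ with $A\setminus\{m\}$ and $B\setminus\{m\}$, proving that the pattern of concurrences these force along $n$ always obstructs choosing two lines from each pencil that avoid $n$; this is the step where the finer geometry of the configuration, rather than its multiplicity vector alone, must be brought in.
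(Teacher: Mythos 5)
Your reduction of the problem to finding five lines of $\mathcal{L}$ no three of which are concurrent is correct (the pigeonhole fact that any three of five cyclically ordered lines contain an adjacent pair is exactly what makes the equivalence work), and it handles part $(i)$, part $(ii)$, and the existence direction of part $(iii)$ more transparently than the paper, which instead runs an explicit line-replacement construction through many subcases. Your multiplicity bookkeeping in those parts is sound: the hypotheses do force $k-q\geq 3$, so the non-pencil triangle exists, each of its three vertices lies on at most one line of the $q$-fold pencil, and $q\geq 5$ always leaves two pencil lines free.

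The gap is the one you flag yourself: the ``only if'' direction of $(iii)$ in the subcase $|A\cap B|=1$. You are right that the obstruction is not numerical, but the finer geometric argument you hope for does not exist, because the required five lines can always be found there. Each $a\in A\setminus\{m\}$ meets $n$ in a single point, and at most one $b\in B\setminus\{m\}$ passes through that point (two lines of $B$ meet only at $p'$, which is not on $n$); hence the bad pairs $(a,b)$ with $a\cap b\in n$ form a partial matching between $A\setminus\{m\}$ and $B\setminus\{m\}$. Since $|A\setminus\{m\}|=|B\setminus\{m\}|=q-1\geq 4$, one can pick $a_1,a_2$ and then $b_1,b_2$ avoiding their at most two matched partners, and $\{a_1,a_2,b_1,b_2,n\}$ has no three concurrent lines. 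Concretely, for $q=5$, $k=10$, take $z=0$, $y=iz$ for $i=1,\dots,4$, $x=jz$ for $j=1,\dots,4$, and $x+y=10z$: here $t_5=2$ and $t_2=25$, yet the lines $y=z$, $x=z$, $y=2z$, $x=2z$, $x+y=10z$ carry an induced $\BFC_{10}$. So part $(iii)$ as stated fails in this subcase; the paper's own proof of $(iii)$ (``similar to the proof of $(ii)$, hence omitted'') silently assumes that $t_q>1$ forces all of $\ell_{q+1},\dots,\ell_k$ through the second $q$-fold point, which is exactly what breaks when $k=2q$. Your proposal therefore cannot be completed as written, but the point at which it stalls is a defect of the statement rather than of your method.
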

\begin{proof}

\textit{Proof of $(i)$:}
   Let $t_{k-q} = 0$.  Since $t_{k-q} = 0$ and $t_{k-q+1}=0$, we have $\sing(\mathcal{L}) \cap \ell_{q+1} \cap \dots \cap \ell_k = \emptyset$ and $\sing(\mathcal{L}) \cap \ell_* \cap \ell_{q+1} \cap \dots \cap \ell_k  = \emptyset$ for all $ * \in \{1, \dots, q\}$. Then for $q+1$ and $q+2$ there exists $k_1 \in \{q+3, \dots k\}$ such that $\sing(\mathcal{L}) \cap \ell_{q+1} \cap \ell_{q+2} \cap \ell_{k_1} = \emptyset$.
    
    Now consider $\ell_1, \ell_2, \ell_{q+1}, \ell_{q+2}, \ell_{k_1}$.
    
    \textbf{Case I}:
    Let either $\sing(\mathcal{L}) \cap \ell_{q+1} \cap \ell_{q+2} \cap \ell_1$ or $\sing(\mathcal{L}) \cap \ell_{q+1} \cap \ell_{q+2} \cap \ell_2$  be non empty, say $\sing(\mathcal{L}) \cap \ell_{q+1} \cap \ell_{q+2} \cap \ell_1 \neq \emptyset$. then we replace $\ell_1$ by $\ell_3$ in the above diagram. Now consider $\ell_{q+2}$ and $\ell_{k_1}$.

\textbf{Subcase I}: If both of $\sing(\mathcal{L}) \cap \ell_{q+2} \cap \ell_{k_1} \cap \ell_2$ and  $\sing(\mathcal{L}) \cap \ell_{q+2} \cap \ell_{k_1} \cap \ell_3$ are empty then we don't change anything and consider $\ell_{q+1}$ and $\ell_{k_1}$ in the next step. 
\begin{itemize}

\item If both of $\sing(\mathcal{L}) \cap \ell_{q+1} \cap \ell_{k_1} \cap \ell_2$  and $\sing(\mathcal{L}) \cap \ell_{q+1} \cap \ell_{k_1} \cap \ell_3$  are empty then we get an induced $\mathcal{C}_{10}$ corresponding to $\{\ell_3, \ell_2, \ell_{q+1}, \ell_{q+2}, \ell_{k_1}\}$ in the following way.

    Let $\sing(\mathcal{L}) \cap \ell_1 \dots \cap \ell_q  = \{p_1\}, \sing(\mathcal{L}) \cap \ell_{q+1} \cap \ell_3 = p_2, \sing(\mathcal{L}) \cap \ell_{q+1} \cap \ell_{q+2} = p_3,  \sing(\mathcal{L}) \cap \ell_{q+2} \cap \ell_{k_1} = p_4, \sing(\mathcal{L}) \cap \ell_{k_1} \cap \ell_2  = p_5$. Since $\sing(\mathcal{L}) \cap \ell_{q+1} \cap \ell_{q+2} \cap \ell_1  \neq \emptyset$, $p_2 \notin \ell_{q+2}$ and since $\sing(\mathcal{L}) \cap \ell_{q+1} \cap \ell_{k_1} \cap \ell_3  = \emptyset$, $p_2 \notin \ell_{k_1}$. Also $\sing(\mathcal{L}) \cap \ell_3 \cap \ell_2 = p_1$ implies that $p_2 \notin \ell_2$.

    By hypothesis, $p_3 \notin  \ell_{k_1}$ and $p_4 \notin \ell_{q+1}$. furthermore $\sing(\mathcal{L}) \cap \ell_{q+1} \cap \ell_{q+2} \cap \ell_{\ast}= \emptyset$ for $\ast =2, 3$ implies that $p_3 \notin \{ \ell_3, \ell_2 \}$ and $\sing(\mathcal{L}) \cap \ell_{i+2} \cap \ell_{k_1} \cap \ell_{\ast} = \emptyset$ for $\ast =2,3$ implies that $p_4 \notin \{\ell_2, \ell_3 \}$. Similarly one can show that $p_5 \notin \{\ell_3, \ell_{q+1}, \ell_{q+2})$. So, $p_i$ for $i=1, \dots ,5$ along with $\{\ell_3, \ell_2, \ell_{q+1}, \ell_{q+2}, \ell_{k_1}\}$ correspond to an induced $\mathcal{C}_{10}$ in $G$.
\vspace{2mm}

  \item  If one of $\sing(\mathcal{L}) \cap \ell_{q+1} \cap \ell_{k_1} \cap \ell_2$ and $\sing(\mathcal{L}) \cap \ell_{q+1} \cap \ell_{k_1} \cap \ell_3$  is non-empty, say $\sing(\mathcal{L}) \cap \ell_{q+1} \cap \ell_{k_1} \cap \ell_3  \neq \emptyset$, then we replace $\ell_3$ by $\ell_4$. Now if $\sing(\mathcal{L}) \cap \ell_{q+2} \cap \ell_{k_1} \cap \ell_4 = \emptyset$ then we claim that  $\{\ell_4, \ell_2, \ell_{q+1}, \ell_{q+2}, \ell_{k_1}\}$ correspond to an induced $\mathcal{C}_{10}$ in $G$. 

    To prove our claim let $\sing(\mathcal{L}) \cap \ell_{q+1} \cap \ell_4 = p_2^{\ast}$. Now similar arguments as above says that $p_2^{\ast} \notin \{ \ell_{q+2}, \ell_{k_1} \}$ and $p_3, p_4, p_5 \notin \ell_4$. So, $p_1, p_2^{\ast}, p_3, p_4, p_5$ along with $\{\ell_4, \ell_2, \ell_{q+1}, \ell_{q+2}, \ell_{k_1}\}$ correspond to an induced $\mathbf{C}_{10}$ in $G$.

    If $\sing(\mathcal{L}) \cap \ell_{q+2} \cap \ell_{k_1} \cap \ell_4 \neq \emptyset$ then replace $\ell_4$ by $\ell_5$ and in that case
    
    $\{\ell_5, \ell_2, \ell_{q+1}, \ell_{q+2}, \ell_{k_1}\}$ correspond to an induced $\mathcal{C}_{10}$ in $G$.

    \end{itemize}
    
    \textbf{Subcase II}:  One of $\ell_{(q+2)(k_1)2}$ and $\ell_{(q+2)(k_1)3}$ is non-empty, say $\ell_{(q+2)(k_1)3} \neq \emptyset$. Then we replace $\ell_3$ by $\ell_4$ and proceed. 
    \begin{itemize}
        \item  If both of $\sing(\mathcal{L}) \cap \ell_{q+1} \cap \ell_{k_1} \cap \ell_2$  and $\sing(\mathcal{L}) \cap \ell_{q+1} \cap \ell_{k_1} \cap \ell_4$  are empty then we get an induced  $\mathcal{C}_{10}$ corresponding to $\{\ell_4, \ell_2, \ell_{q+1}, \ell_{q+2}, \ell_{k_1}\}$ as shown in \textbf{Subcase I}.

        \item If one of $\sing(\mathcal{L}) \cap \ell_{q+1} \cap \ell_{k_1} \cap \ell_2$ and $\sing(\mathcal{L}) \cap \ell_{q+1} \cap \ell_{k_1} \cap \ell_4$  is non-empty, say $\sing(\mathcal{L}) \cap \ell_{q+1} \cap \ell_{k_1} \cap \ell_4  \neq \emptyset$, then we replace $\ell_4$ by $\ell_5$. If $\sing(\mathcal{L}) \cap \ell_{q+2} \cap \ell_{k_1} \cap \ell_5 = \emptyset$ then we claim that  $\{\ell_5, \ell_2, \ell_{q+1}, \ell_{q+2}, \ell_{k_1}\}$ correspond to an induced $\mathcal{C}_{10}$ in $G$. Otherwise replace $\ell_5$ by $\ell_6$ and in that case $\{\ell_5, \ell_2, \ell_{q+1}, \ell_{q+2}, \ell_{k_1}\}$ correspond to an induced $\mathbf{C}_{10}$.
    \end{itemize}
  
    \textbf{Case II}:
    If Both of  $\sing(\mathcal{L}) \cap \ell_{q+1} \cap \ell_{q+2} \cap \ell_1$ and $\sing(\mathcal{L}) \cap \ell_{q+1} \cap \ell_{q+2} \cap \ell_2$ are empty then we make no changes and proceed. In the next step we consider $\ell_{q+2}$ and $\ell_{k_1}$ and divide the proof into two subcases as in \textbf{Case I}. The proof goes along the lines of \textbf{Case I} and the details are left to the reader.

    Now let $t_{k-q} \neq \emptyset$. We claim that $t_q, t_{k-q} \neq 0$ implies that $t_{k-q-1} = 0$. If possible assume that $t_{k-q-1} \neq 0$. Since $\sing(\mathcal{L}) \cap \ell_1 \cap \dots \cap \ell_q \neq \emptyset$, only possibility of a $(k-q-1)$-fold point is
    $\sing(\mathcal{L}) \cap \ell_{*} \cap \ell_{{q+1}}\cap \dots \cap \ell_k$ for some $* \in \{1, \dots, q\}$. But that would imply that $t_{k-q}=0$, a contradiction.

    Now to prove second part of $(i)$, assume that $\sing(\mathcal{L}) \cap \ell_{{q+1}}\cap \dots \cap \ell_k = \emptyset$. Since $t_{k-q+1} = 0$, $\sing(\mathcal{L}) \cap \ell_* \cap \ell_{{q+1}}\cap \dots \cap \ell_k = \emptyset$ for all $* \in \{1, \dots, q\}$. Now the proof is similar to the previous case. 

    To prove the converse suppose $\sing(\mathcal{L}) \cap \ell_{{q+1}}\cap \dots \cap \ell_k \neq \emptyset$. We will show that $G$ does not have an induced $\BFC_{10}$. Suppose there exists an induced cycle of length ten in $G$ and $\ell_{i_1}p_{i_1i_2}\ell_{i_2} \dots \ell_{i_5}p_{i_5i_1}\ell_{i_1}$ corresponds to an induced cycle of length ten. Then there exists a three element subset $S =  \subset \{ \ell_a, \ell_b, \ell_c \} \subset \{ \ell_{i_1}, \dots, \ell_{i_5} \}$ such that $\{a,b,c \} \subset \{1, \dots, q \}$ or $\{a,b,c \} \subset  \{q+1, \dots, k \}$. 

    If $\{a,b,c \} \subset \{1, \dots, q \}$, $\sing(\mathcal{L}) \cap \ell_a \cap \ell_b \cap \ell_c = \sing(\mathcal{L})\cap \ell_1 \cap \dots \cap \ell_q$, a contradiction.

    Similarly, if $\{a,b,c\} \subset \{q+1, \dots, k \}$, $\sing(\mathcal{L}) \cap \ell_a \cap \ell_b \cap \ell_c = \sing(\mathcal{L})\cap \ell_{q+1} \cap \dots \cap \ell_k$, a contradiction. So, $G$ does not have an induced cycle of length ten.

    \textit{Proof of $(ii)$:}

    For $k=2q-1$, $t_{k-q} =0$ implies that $\sing(\mathcal{L}) \cap \ell_{q+1} \cap \dots \cap \ell_k = \emptyset$. Also $t_q=1$ implies that $\sing(\mathcal{L}) \cap \ell_* \cap \ell_{{q+1}}\cap \dots \cap \ell_k = \emptyset$ for all $* \in \{1, \dots, q\}$. Now, the proof to show that $G$ contains an induced cycle of length ten is similar to the proof of $(i)$. To prove the converse assume that $t_q > 1$. Then $\sing(\mathcal{L}) \cap \ell_i \cap \ell_{{q+1}}\cap \dots \cap \ell_k \neq \emptyset$ for some $i \in \{1, \dots, q\}$. Suppose, $\ell_{i_1}p_{i_1i_2}\ell_{i_2} \dots \ell_{i_5}p_{i_5i_1}\ell_{i_1}$ corresponds to an induced cycle of length ten in $G$. Then we can argue as in $(i)$ to show that there exists a subset $\{a,b,c\} \subset \{i_1, \dots, i_5 \}$ such that $\{a,b,c \} \subset \{1, \dots, q \}$ or $\{a,b,c \} \subset  \{q+1, \dots, k \}$, a contradiction. 

    \textit{Proof of $(iii)$}: we have $t_{k-q+1} =0$ as $q$ is the largest multiplicity of points in $\sing(\mathcal{L})$ and $k-q +1 = q+1 > q$. So, $\sing(\mathcal{L}) \cap \ell_* \cap \ell_{{q+1}}\cap \dots \cap \ell_k = \emptyset$ for all $* \in \{1, \dots, q\}$. Next, $t_q=1$ implies that $\sing(\mathcal{L}) \cap \ell_{q+1} \cap \dots \cap \ell_k = \emptyset$. Now, the proof is similar to the proof of $(ii)$, hence omitted.

\end{proof}

Now we shift our attention to line arrangements of $k$ lines with $k < 2q$, where $q$ is the largest multiplicity of points in $\sing(\mathcal{L})$. We know that only line arrangements with $t_r =0$ for all $r > 2$ are the generic line arrangements and the associated Levi graphs in that case contain cycles of length $2i$ for all $ 2 
\leq i \leq k$. So, we consider line arrangements with $t_r=0$ for all $r > 3$ and study the induced cycles in the associated Levi graphs.

    \vskip 6mm



\vspace{ 3mm}

\begin{theorem}\label{thm: max t_3}
    Let $\mathcal{L} = \{ \ell_1, \dots \ell_k \}$ be a line arrangement of $k$ lines with $t_3 \neq 0$ and $t_r = 0$ for all $r > 3$. Then

    $(i)$If $k$ is odd then the associated Levi graph has an induced $\BFC_{2i}$, for all $i \leq \lfloor {\frac{k+9}{4}} \rfloor$.

    $(ii)$ if $k$ is even and atleast one line has more than one double point in it, the associated Levi graph has an induced $\BFC_{2i}$ for all $i \leq \lfloor {\frac{k+11}{4}} \rfloor$.
    
    $(iii)$ if $k$ is even and every line in $\mathcal{L}$ has exactly one double point in it, the associated Levi graph has an induced $\BFC_{2i}$ for all $i \leq \lfloor {\frac{2k+16}{7}} \rfloor$.
\end{theorem}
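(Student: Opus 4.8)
The plan is to recast the problem as a purely combinatorial statement about the arrangement and then build the cycles by a controlled growth procedure. The engine of the argument is the following inducedness criterion: a closed walk $\ell_{a_1}, p_1, \ell_{a_2}, \dots, \ell_{a_i}, p_i$ with $p_j \in \sing(\mathcal{L}) \cap \ell_{a_j} \cap \ell_{a_{j+1}}$ (indices cyclic) is an induced $\BFC_{2i}$ exactly when the $\ell_{a_j}$ are distinct and no chosen point $p_j$ lies on a third cycle line. Since $t_r = 0$ for $r>3$, every point has multiplicity at most $3$, so the only possible obstruction is a \emph{triple} point $p_j$ whose third line also appears among the $\ell_{a_m}$. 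In particular, if every $p_j$ is a double point the walk is automatically induced, because a double point lies on exactly two lines and the consecutive intersection points of distinct line-pairs are always distinct. This reduces matters to finding cycles in the auxiliary graph $H_2$ on vertex set $\{\ell_1,\dots,\ell_k\}$ whose edges are the pairs meeting in a double point, where by \eqref{comb count} one has $\deg_{H_2}(\ell_j)=k-1-2d_3(\ell_j)$ with $d_3(\ell_j)$ the number of triple points on $\ell_j$, supplemented by the option of joining two lines through a triple point at the cost of forbidding its third line from the cycle.

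With this criterion I would fix a triple point with $\sing(\mathcal{L})\cap\ell_1\cap\ell_2\cap\ell_3=\{p\}$ (which exists as $t_3\neq 0$) and grow cycles by edge subdivision. A base $\BFC_6$ is furnished by $\ell_1,\ell_2$ together with any line not through $p$; this is induced since the three lines are non-concurrent, and $\BFC_4$ cannot occur so $\BFC_6$ is the genuine starting point. Given an induced $\BFC_{2j}$, I replace one edge $\ell_a\,r\,\ell_b$ by a path $\ell_a\,p'\,\ell\,p''\,\ell_b$ through a fresh line $\ell$, raising the length by $2$; iterating this produces induced cycles of \emph{every} even length up to the stage where no admissible insertion remains, which is why the statement yields all lengths $2i$ below the threshold rather than a single length. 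The admissibility conditions are that $\ell$ be distinct from the current cycle lines, that the new points $p',p''$ be distinct and avoid the points already in use, and that neither $p'$ nor $p''$ drag in a third cycle line. Each condition excludes only boundedly many lines, and the count of excluded lines against the total $k$ is precisely what produces the explicit thresholds.

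For parts $(i)$ and $(ii)$ I would run this count using double-point insertions as the primary move, reserving triple-point insertions for when $H_2$ runs short. The parity of $k$ and the part $(ii)$ hypothesis that some line carries at least two double points enter exactly here: an extra double point furnishes one further admissible insertion, which is the source of the improvement from $\lfloor(k+9)/4\rfloor$ to $\lfloor(k+11)/4\rfloor$. Part $(iii)$ is genuinely different and is where the main difficulty lies: if every line has exactly one double point then $H_2$ is a perfect matching on the $k$ lines and contains no cycle, so double-point insertions alone cannot sustain the growth. The entire construction must then be carried by triple-point connections, each removing its third line from the usable pool; keeping track of these forbidden third lines so that they neither coincide with cycle lines nor block the next insertion is the delicate bookkeeping, and optimizing the alternation between the matching edges and the triple-point connections is what forces the weaker bound $\lfloor(2k+16)/7\rfloor$.

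The step I expect to be the main obstacle is the uniform counting that guarantees an admissible insertion persists up to the claimed threshold, together with the verification that no inserted point secretly creates a chord through an unnoticed triple point. In case $(iii)$ this is compounded by the rigidity of the matching structure, where one must simultaneously respect the fixed double points and the growing list of excluded third lines. Accordingly I would dispose of the small values of $i$ by the explicit base cycles above and treat the general step by the subdivision argument, separating the three regimes according to the degree sequence of $H_2$.
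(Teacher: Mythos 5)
Your reduction to the incidence criterion is sound (in a Levi graph the only possible chords are point--line incidences, so with $t_r=0$ for $r>3$ the sole obstruction is a triple point whose third line is also a cycle line), and your base $\BFC_6$ and the observation that case $(iii)$ makes the double-point graph $H_2$ a perfect matching are both correct and consistent with what the paper does. But the proposal has a genuine gap in two places. First, your list of admissibility conditions for the subdivision move $\ell_a\,r\,\ell_b \rightsquigarrow \ell_a\,p'\,\ell\,p''\,\ell_b$ is incomplete: requiring that $p',p''$ ``avoid the points already in use'' does not prevent the new line $\ell$ from passing through a \emph{retained} cycle point $p_m$ lying on neither $\ell_a$ nor $\ell_b$; such a $p_m$ (necessarily a triple point whose third line is $\ell$) creates the chord $\{p_m,\ell\}$ and destroys inducedness. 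This condition must be added, and it excludes up to one further line per retained triple point, so it materially changes the count. Second, and more importantly, the count itself is the entire content of the theorem and you defer it. When one actually performs it for your insertion move on a $2j$-cycle, the exclusions are roughly: the $j$ cycle lines, up to $j-2$ third lines of triple points of $\ell_a$ with other cycle lines (to protect $p'$), the same for $\ell_b$, one line for concurrency of $\ell,\ell_a,\ell_b$, and up to $j-1$ third lines through retained triple points --- about $4j-4$ in total, giving insertions only while $j\leq (k+3)/4$ and hence cycles up to $i\leq (k+7)/4$, which falls short of the claimed $\lfloor (k+9)/4\rfloor$. Closing that gap requires either a sharper accounting of overlaps among the exclusion sets or a different growth step, and nothing in the proposal supplies it; likewise the $7$ in the denominator of part $(iii)$, which must emerge from the forced alternation of matching edges and triple-point edges, is asserted rather than derived.

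For comparison, the paper grows an open path $\ell_{j_1},\dots,\ell_{j_i}$ one line at a time and only closes it at the last step, counting at stage $i$ the excluded choices as $(i-1)+(i-2)+(i-3)+(i-4)=4i-10$ (previously used lines; third lines of triple points of $\ell_{j_{i-1}}$ with earlier lines; third lines of triple points of $\ell_{j_1}$ with earlier lines, to permit closing; third lines through interior cycle points). The path-then-close bookkeeping is what yields the additive constants $+9$, $+11$, $+16$; in parts $(ii)$ and $(iii)$ the paper additionally \emph{chooses} certain consecutive intersection points to be double points (using the hypothesis on how many double points each line carries), which is the mechanism your sketch gestures at but does not implement. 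If you want to salvage the subdivision framework you would need to show that the worst-case exclusion sets cannot all be disjoint, or switch to the paper's open-path construction for the final two steps.
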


\begin{proof}
    Let $\ell_{j_1}, \dots , \ell_{j_i}$ corresponds to an induced cycle $\BFC_{2i}$ in $G$.
    
   \textit{Proof of $(i)$:} We start from $\ell_{j_1}$ and consider $\ell_{j_1}$ and $\ell_{j_2}$ as first two lines. Then we have $k-2$ or $k-3$ choices for the third line $\ell_{j_3}$ in $\mathcal{C}$ depending on whether the intersection point of $\ell_{j_1}$ and $\ell_{j_2}$ is a double or triple point. So, there are at least $k-3$ choices for $\ell_{j_3}$. If both $\sing(\mathcal{L}) \cap \ell_{j_1} \cap \ell_{j_2}$ and $\sing(\mathcal{L})  \cap \ell_{j_2} \cap \ell_{j_3}$ are triple points then we have $k-5$ choices for $\ell_{j_4}$. Also, we have to remove the possibility of $\sing(\mathcal{L}) \cap \ell_{j_1} \cap \ell_{j_3} \cap \ell_{j_4}$ being non-empty. So, there are at least $k-6$ choices for $\ell_{j_4}$.

   Similarly for $5 \leq i \leq \lfloor {\frac{k+5}{3}} \rfloor $, if all of $\sing(\mathcal{L}) \cap \ell_{j_1} \cap \ell_{j_2}, \sing(\mathcal{L}) \cap \ell_{j_2} \cap \ell_{j_3}, \dots , \sing(\mathcal{L}) \cap \ell_{j_{i-1}} \cap \ell_{j_i}$ are triple points then we get at least $k -{(i-1) + (i -2)}$ choices for $\ell_{j_i}$. But it might happen that all of $\sing(\mathcal{L}) \cap \ell_{j_1} \cap \ell_{j_{t-1}}, \dots , \sing(\mathcal{L}) \cap \ell_{j_{i-3}} \cap \ell_{j_i}$ are triple points. In that case, we remove $i-3$ more choices for $\ell_{j_i}$. Also, the intersection point of $\ell_{j_1}$ and $\ell_{j_i}$ might pass through any one of $\ell_{j_t}$ for $t=3, \dots, i-2$. So, for an induced cycle of length $2i$ we have to further remove $i-4$ choices for $\ell_{j_i}$. Hence, we have at least $k - \{ (i-1) +(i-2) +(i-3) +(i-4)\} = k -(4i-10)$ choices for $\ell_{j_i}$. Now, for $i= \lfloor {\frac{k+9}{4}} \rfloor$ there are at least $k - (4 \lfloor {\frac{k+5}{3}} \rfloor -10 \geq 1$ choices for $\BFC_{j_{\lfloor {\frac{k+5}{3}} \rfloor}}$. That proves the existence of an induced cycle of length $2i$ for every $ i \leq \lfloor {\frac{k+9}{4}} \rfloor$.

 Note that if $k$ is even then every line has at least one double point i.e. $\sing(\mathcal{L}) \cap \ell_i$ contain at least one double point for all $i \in \{1, \dots, k\}$. We divide it into two cases.
 
\vspace{2mm}
\textit{Proof of $(ii)$:}
 Without loss of generality, let us assume that $\ell_{j_2}$ has at least two double points in it and they are $\sing(\mathcal{L}) \cap \ell_{j_2} \cap \ell_{j_1}$ and $\sing(\mathcal{L}) \cap \ell_{j_2} \cap \ell_{j_3}$. Then we have $k-3$ or $k-4$ choices for $\ell_{j_4}$ depending on whether the intersection point of $\ell_{j_1}$ and $\ell_{j_3}$ is a double or a triple point. So, we have at least $k-4$ choices for $\ell_{j_4}$. 

 If any one of $\sing(\mathcal{L}) \cap \ell_{j_4} \cap \ell_{j_1}$, $\sing(\mathcal{L}) \cap \ell_{j_4} \cap \ell_{j_2}$ or $\sing(\mathcal{L}) \cap \ell_{j_4} \cap \ell_{j_3}$ is a double point, then we have at least $k -(4+2)$ choices for $\ell_{j_5}$. Otherwise, we can choose $\ell_{j_5}$ such that $\sing(\mathcal{L}) \cap \ell_{j_4} \cap \ell_{j_5}$ is a double point and proceed to the next step.

 Next we look at the choices for  $\ell_{j_6}$. If one of $\sing(\mathcal{L}) \cap \ell_{j_5} \cap \ell_{j_1}$, \dots, $\sing(\mathcal{L}) \cap \ell_{j_5} \cap \ell_{j_4}$ is a double point, then we have at least $k- (5+3)$ choices for $\ell_{j_6}$. But if $\sing(\mathcal{L}) \cap \ell_{j_3} \cap \ell_{j_4}$ is a triple point,  we need to remove one more possibility. So we have at least $k-(5+3+1)$ choices for $\ell_{j_6}$. Otherwise, similar to the previous case we can choose $\ell_{j_6}$ such that $\sing(\mathcal{L}) \cap \ell_{j_5} \cap \ell_{j_6}$ is a double point.

 Proceeding similarly we have at least $k -\{ (i-2) + (i-4) \}$ choices for $\ell_{j_{i-1}}$ if one of $\sing(\mathcal{L}) \cap \ell_{j_{i-2}} \cap \ell_{j_1}$, \dots, $\sing(\mathcal{L}) \cap \ell_{j_{i-2}} \cap \ell_{j_{i-3}}$ is a double point. Also, some or all of $\sing(\mathcal{L}) \cap \ell_{j_3} \cap \ell_4$, \dots, $\sing(\mathcal{L}) \cap \ell_{j_{i-4}} \cap \ell_{j_{i-3}}$ might be triple points.So, we have to further remove at most $i-6$ possibilities and finally we have at least $k -\{ (i-2)+(i-4) +(i-6)\}$ possibilities for $\ell_{j_{i-1}}$. If none of  $\sing(\mathcal{L}) \cap \ell_{j_{i-2}} \cap \ell_{j_1}$, \dots, $\sing(\mathcal{L}) \cap \ell_{j_{i-2}} \cap \ell_{j_{i-3}}$ are double points then as in the previous cases we choose $\ell_{j_{i-1}}$ such that $\sing(\mathcal{L}) \cap \ell_{j_{i-2}} \cap \ell_{j_{i-1}}$ is a double point.

 For $\ell_{j_i}$ we have at least $k -\{ (i-1) + (i-3) + (i-5)\}$ choices if one of $\sing(\mathcal{L}) \cap \ell_{j_{i-1}} \cap \ell_{j_1}$, \dots, $\sing(\mathcal{L}) \cap \ell_{j_{i-1}} \cap \ell_{j_{i-2}}$ is a double point. Now, for an induced cycle of length $2i$ ending with $\ell_{j_i}$ and $\ell_{j_1}$ we also have to look at the following list of intersection points: 
 
 $\sing(\mathcal{L}) \cap \ell_{j_1} \cap \ell_{j_3}$, \dots, $\sing(\mathcal{L}) \cap \ell_{j_1} \cap \ell_{j_{i-2}}$

 Some or all of these points might be triple points. In that case, we have to further remove at most $i-4$ choices for $\ell_{j_i}$. So, we have at least $k -\{(i-1) +(i-3) +(i-5) +(i-4) = k -(4i-13)\}$ choices for $\ell_{j_i}$.

 If all of $\sing(\mathcal{L}) \cap \ell_{j_{i-1}} \cap \ell_{j_1}$, \dots, $\sing(\mathcal{L}) \cap \ell_{j_{i-1}} \cap \ell_{j_{i-2}}$ are triple points then we have at least $k - \{(i-1) +(i-2) +(i-5) +(i-4)\} = k -(4i - 12)$ choices for $\ell_{j_i}$. 
 
 Combining both the cases, we conclude that we have at least $k - (4i-12)$ choices for $\ell_{j_i}$. Now for $i= \lfloor {\frac{k+11}{4}} \rfloor$ there are at least $k - (4 \lfloor {\frac{k+11}{4}} \rfloor -12) \geq 1$ choices for $\BFC_{j_{\lfloor {\frac{k+11}{4}} \rfloor}}$. That proves the existence of an induced cycle of length $2i$ for every $ i \leq \lfloor {\frac{k+11}{4}} \rfloor$.
 
\vspace{2mm}
 \textit{Proof of $(iii)$:} All the lines in $\mathcal{L}$ has exactly one double point in it.

 Without loss of generality we can assume that the double point on $\ell_{j_1}$ lies on $\ell_{j_2}$ i.e. $\sing(\mathcal{L}) \cap \ell_{j_1} \cap \ell_{j_2}$ is a double point. Then we have $k-2$ choices for $\ell_{j_3}$. Since each line has only one double point on it, $\sing(\mathcal{L}) \cap \ell_{j_2} \cap \ell_{j_3}$ is not a double point. Choose, $j_4$ such that $\sing(\mathcal{L}) \cap \ell_{j_3} \cap \ell_{j_4}$ is a double point. 

 Since $\sing(\mathcal{L}) \cap \ell_{j_2} \cap \ell_{j_3}$ is a triple point, we have at least $k- (4+1)$ choices for $\ell_{j_5}$.

 Now, if $i$ is even i.e. $(i-1)$ odd, we can choose $\sing(\mathcal{L}) \cap \ell_{j_1} \cap \ell_{j_2}$, $\sing(\mathcal{L}) \cap \ell_{j_3} \cap \ell_{j_4}$, \dots, $\sing(\mathcal{L}) \cap \ell_{j_{i-3}} \cap \ell_{j_{i-2}}$ to be double points. Hence, $\sing(\mathcal{L}) \cap \ell_{j_2} \cap \ell_{j_3}$, $\sing(\mathcal{L}) \cap \ell_{j_4} \cap \ell_{j_5}$, \dots, $\sing(\mathcal{L}) \cap \ell_{j_{i-2}} \cap \ell_{j_{i-1}}$ are all triple points. 
 Then counting as in \textbf{Case I} we have at least $k -\{(i-1) +\frac{i-2}{2} +(i-3) +(i-4) \}= k - (\frac{7i-18}{2}) $ choices for $\ell_{j_i}$.

 If $i$ is odd, we can choose $\sing(\mathcal{L}) \cap \ell_{j_1} \cap \ell_{j_2}$, $\sing(\mathcal{L}) \cap \ell_{j_3} \cap \ell_{j_4}$, \dots, $\sing(\mathcal{L}) \cap \ell_{j_{i-2}} \cap \ell_{j_{i-1}}$ to be double points and $\sing(\mathcal{L}) \cap \ell_{j_2} \cap \ell_{j_3}$, $\sing(\mathcal{L}) \cap \ell_{j_4} \cap \ell_{j_5}$, \dots, $\sing(\mathcal{L}) \cap \ell_{j_{i-3}} \cap \ell_{j_{i-2}}$ to be all triple points. So, we have at least $k -\{(i-1) +\frac{i-3}{2} +(i-3) +(i-4) \}= k - (\frac{7i-19}{2})$ choices for $\ell_{j_i}$.

 combining both the cases, we observe that there are at least $ k - (\frac{7i-18}{2})$ choices for $\ell_{j_i}$. Hence for $i= \lfloor {\frac{2k+16}{7}} \rfloor$ there are at least $k - (7 \lfloor {\frac{2k+16}{7}} \rfloor -18) \geq 1$ choices for $\BFC_{j_{\lfloor {\frac{2k+16}{7}} \rfloor}}$. That proves the existence of an induced cycle of length $2i$ for every $ i \leq \lfloor {\frac{2k+16}{7}} \rfloor$.
 \end{proof}

We notice  in Theorem \ref{thm: max t_3} that  the associated Levi graphs of line arrangements of $k > 10$ lines contain induced cycles of length $10$. Actually we prove more than that. We show that the associated Levi graphs contain induced cycles of length $2i$, where $i$ increases with $k$. But this helps us very little to understand the length of longest induced cycles of a line arrangement in general. Determining length of longest induced cycles of a line arrangement depends heavily on its geometric structure and combinatorial data. We consider two examples with $t_r =0$ for all $r > 3$ and find out the length of longest induced cycles in each cases.

\vskip 2mm

\begin{example}\label{exam: nine lines with t_3}
Let $\mathcal{L}_{\{9\}}$ be a $(9_3)$ arrangement in $\mathbb{P}^2$ (arrangement of nine lines with $t_3=9$) where the lines are given by:
\begin{align*}
  L_1: y=bz, L_2: y=z, L_3 = y=0 \\
  L_4 : x=0, L_5: x=z, L_6: x=az \\
  L_7 : -x+z,L_8:  y =\frac{b}{a}x \, \, \, \, \hspace{1cm} \\
  L_9 : y= \frac{b(b-1)}{b-a}(x + \frac{b(1-a)}{b-a}z)
\end{align*}
where $a,b$ satisfying the relation $a - (b^2 -b +1) =0$. 

Triple points of $\sing(\mathcal{L}_{\{9\}})$ are given by the following table \ref{table:1}.
\begin{table}[h!]
\centering
\begin{tabular}{|ccc|ccc|ccc|}
\hline
$L_1$ &	$L_2$ &	$L_3$ & $L_4$ &	$L_5$ &	$L_6$ & $L_7$ &	$L_8$ & $L_9$\\
\hline
$e_1$ & $e_1$ & $e_1$ & $e_8$	&	$e_8$ & $e_8$ & $e_4$ & $e_3$	&	$e_2$ \\
$e_2$ & $e_4$ & $e_6$ & $e_4$	&	$e_2$ & $e_3$ & $e_7$ & $e_5$	&	$e_5$  \\
$e_3$ & $e_5$ & $e_7$ & $e_6$ &	$e_7$ & $e_9$ & $e_9$ & $e_6$	&	$e_9$ \\
\hline
\end{tabular}
\caption{ $9_3$ }
\label{table:1}
\end{table}

Remaining points of $\sing(\mathcal{L}_{\{ 9 \}})$ are double points and are denoted as:
\begin{align*}
    \sing(\mathcal{L}_{\{9\}}) \cap L_1 \cap L_4 = e_{10}, \sing(\mathcal{L}_{\{9\}}) \cap L_1 \cap L_7 = e_{11} \sing(\mathcal{L}_{\{9\}}) \cap L_2 \cap L_5 = e_{12}, \\ \sing(\mathcal{L}_{\{9\}}) \cap L_2 \cap L_6 = e_{13}, 
    \sing(\mathcal{L}_{\{9\}}) \cap L_3 \cap L_6 = e_{14},
    \sing(\mathcal{L}_{\{9\}}) \cap L_3 \cap L_9 = e_{15},\\
    \sing(\mathcal{L}_{\{9\}}) \cap L_4 \cap L_9 = e_{16} ,
    \sing(\mathcal{L}_{\{9\}}) \cap L_5 \cap L_8 = e_{17},
    \sing(\mathcal{L}_{\{9\}}) \cap L_7 \cap L_8 = e_{18}.
\end{align*}

By part $(i)$ of Theorem \ref{thm: max t_3} we see that the associated Levi graph of $\mathcal{L}_{\{9\}}$ has induced cycles of length $2i$ for $i \leq 4$. But we claim that the Levi graph $G_{\{9\}}$ associated to $\mathcal{L}_{\{9\}}$ contain induced cycles $\BFC_{2i}$ for $i \leq 7$ and length of the maximum induced cycle is $14$. First we show the existence of induced cycles of length $10,12$ and $14$.

 $\bullet$  $L_1e_{10}L_4e_4L_7e_{18}L_8e_5L_9e_2L_1$ corresponds to an induced cycle of length $10$ in $G_{\{9\}}$.
 
$\bullet$  $L_1e_{10}L_4e_4L_7e_{18}L_8e_5L_9e_{15}L_3e_1L_1$ corresponds to an induced cycle of length $12$ in $G_{\{9\}}$.

$\bullet$  $L_1e_1L_2e_{12}L_5e_7L_7e_9L_9e_{15}L_4e_6L_8e_3L_1$ corresponds to an induced cycle of length $14$ in $G_{\{9\}}$.

Now assume that $G_{\{9\}}$ contains an induced cycle of length $16$. Let $\ell_1p_1\ell_2\dots p_7\ell_8p_8\ell_1$ corresponds to an induced cycle of length $16$ in $G_{\{9\}}$. We first show that all $p_i$ for $i=1, \dots , 8$ cannot be double points.

Since, all the lines in $\mathcal{L}_{\{9\}}$ has two double points in it, w.l.o.g we can take $\ell_1 = L_1$. Next we choose $\ell_2 = L_4, p_1= e_{10}$. Then we must have $\ell_3= L_9$ and $p_2= e_{16}$. Similarly for $\ell_4,\ell_5, \ell_6, \ell_7$ and $\ell_8$ we then have $\ell_4= L_3, p_3=e_{15}, \ell_5= L_6, p_4 = e_{14}, \ell_6 =L_2, p_5=e_{13}, \ell_7 =L_5, p_6= e_{12}$ and $\ell_8 = L_8, p_7 = e_{17}$. But the intersection point of  $L_1$ and $L_8$ is $e_3$, a triple point. In fact, $e_3$ passes through $L_6$. So all the $p_i$ for $i=1, \dots, 8$ can not be double points. 

Now, w.l.o.g let $p_1=e_1$ be a triple point and let $\ell_1 = L_1$ and $\ell_2=L_2$. Now, $p_2$ cannot be a triple point, since any triple point on $L_2$ other than $e_1$ will block the entry of some $L_i$ for $i \neq 3$, which would make it impossible to form an induced cycle of length $16$. So, we have two choices for $\ell_3$, $L_5$ and $L_6$. If $\ell_3= L_6$, for the same reason as stated above $p_3$ cannot be triple point. If $p_3$ is a double point, only possibility for $\ell_4$ is $L_3$. This can not happen since $L_3$ passes through $e_1$. Hence $\ell_4 \neq L_3$ which further implies that $\ell_3 \neq L_6$. This leaves us with only one choice, $\ell_3 = L_5$. Now we show that $\ell_4= L_7$ and $p_3= e_7$, the intersection point of $L_3, L_5$ and $L_7$.

We observe that for the same reason a stated for $p_2$, $p_8$ cannot be a triple point. So, $\ell_8=L_4$ and $p_8=e_{10}$. There are nine double points in $\sing(\mathcal{L}_{\{9\}})$ and $\{p_i, \, i=1, \dots, 8\} \cap \{e_{11}, e_{14}, e_{15}\} = \emptyset$. So, apart from $e_1$ at least one more $p_i$ for some $i=1, \dots, 8$ must be a triple point and only possibilities are $e_6$ and $e_7$. Since $e_6$ is the intersection point of $L_3, L_4$ and $L_8$ and $\ell_8= L_4$, $e_6 \neq p_i$ for $i=1, \dots, 8$. So, $p_3 = e_7$ and $\ell_4=L_7$. Then, $\ell_5$ must be equal to $L_8$ and $p_4 = e_{18}$. Now, $p_5$ can not be a triple point since $e_7$ is not allowed. If $p_5$ is a double point, only choice for $p_5$ is $e_{17}$, which implies that $\ell_5 = L_5$, a contradiction. Hence, $G$ does not contain an induced cycle of length $16$ or more.

\end{example}

\begin{example}\label{exam: ten lines with t_2 and t_3}

If we add a tenth line $L_{10} : y = \frac{1}{a-1}(x-1)$ to the line arrangement in Example \ref{exam: nine lines with t_3}, we get a line arrangement $\mathcal{L}_{\{10\}}$ with $12$ triple points and $9$ double points. We denote the nine triple points $e_1, \dots, e_9$ as in table (\ref{table:1}) and the remaining triple points as follows:
\begin{align*}
    e_{10} = \sing(\mathcal{L}_{\{10\}}) \cap L_2 \cap L_5 \cap L_{10} \\
    e_{11} = \sing(\mathcal{L}_{\{10\}}) \cap L_3 \cap L_6 \cap L_{10} \\
    e_{12} = \sing(\mathcal{L}_{\{10\}}) \cap L_4 \cap L_9 \cap L_{10} .
\end{align*}
 All the remaining intersection points are double points. They are
 \begin{align*}
     e_{13} = \sing(\mathcal{L}_{\{10\}}) \cap L_1 \cap L_4 , e_{14} = \sing(\mathcal{L}_{\{10\}}) \cap L_1 \cap L_7, e_{15} = \sing(\mathcal{L}_{\{10\}}) \cap L_1 \cap L_{10} \\
     e_{16} = \sing(\mathcal{L}_{\{10\}}) \cap L_2 \cap L_6, e_{17} = \sing(\mathcal{L}_{\{10\}}) \cap L_3 \cap L_9, e_{18} = \sing(\mathcal{L}_{\{10\}}) \cap L_5 \cap L_8 \\
     e_{19} = \sing(\mathcal{L}_{\{10\}}) \cap L_7 \cap L_8, e_{20} = \sing(\mathcal{L}_{\{10\}}) \cap L_7 \cap L_{10}, e_{21} = \sing(\mathcal{L}_{\{10\}}) \cap L_8 \cap L_{10}.
 \end{align*}

 By part $(ii)$ of Theorem \ref{thm: max t_3} we conclude that the associated Levi graph $G_{\{10\}}$ of $\mathcal{L}_{\{10\}}$ has induced cycles of length $2i$ for $i \leq 5$. But using the combinatorial properties of $\mathcal{L}_{\{10\}}$ we claim that $G_{\{10\}}$ has induced cycles of length $2i$ for $i \leq 9$ and the maximum length of an induced cycle in $G_{\{10\}}$ is $18$. First we show the existence of induced cycles of lengths $2i$ for $6 \leq i \leq 9$ in $G_{\{10\}}$.

 $\bullet$ $L_1e_1L_3e_{17}L_9e_5L_8e_{21}L_{10}e_{12}L_4e_{13}L_1$ corresponds to an induced cycle of length $12$ in $G_{\{10\}}$.

 $\bullet$ $L_1e_{15}L_{10}e_{11}L_6e_{16}L_2e_5L_8e_{18}L_5e_7L_7e_{14}L_1$ corresponds to an induced cycle of length $14$ in $G_{\{10\}}$. 

 $\bullet$ $L_1e_1L_3e_{19}L_9e_5L_8e_{21}L_{10}e_{10}L_5e_8L_4e_4L_7e_{14}L_1$ corresponds to an induced cycle of length $16$ in $G_{\{10\}}$. 

 $\bullet$ $L_2e_{10}L_5e_{18}L_8e_{19}L_7e_{14}L_1e_{13}L_4e_{12}L_9e_{17}L_3e_{11}L_6e_{16}L_2$ corresponds to an induced cycle of length $18$ in $G_{\{10\}}$.

 As $\sing(\mathcal{L}_{\{10\}})$ has nine double intersection points, $\mathcal{L}_{\{10\}}$ can't have an induced cycle of length $20$. Hence, the maximum length of induced cycle in $G_{\{10\}}$ is $18$.

 \end{example}

.


 Now we reformulate Theorem \ref{thm: max t_3} in a more general setup.

   \begin{theorem}\label{thm: max t_q}
       Let $\mathcal{L} = \{ \ell_1, \dots \ell_k \}$ be a line arrangement of $k$ lines with $t_q \neq 0$ and $t_r = 0$ for all $r > q$. Then

    $(i)$ then the associated Levi graph has an induced $\BFC_{2i}$, for all $ i \leq \lfloor {\frac{k+ 9q -18}{3q-5}} \rfloor$.

    $(ii)$ If only non-zero entries in $(t_1, \dots, t_q)$ are $t_q$ and $t_p$ for some $p \in\{ 2, \dots, q-1\}$ and $q-1 \nmid k-1$, the associated Levi graph has an induced $\BFC_{2i}$ for all $ i \leq \lfloor {\frac{k+ 10q -18}{3q-5}} \rfloor$.

   \end{theorem}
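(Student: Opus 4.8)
The plan is to establish both parts by the same greedy line-by-line construction used in the proof of Theorem \ref{thm: max t_3}, now carried out with intersection points of multiplicity up to $q$. I would build the desired cycle as an induced path $\ell_{j_1}, \ell_{j_2}, \dots$ one line at a time, each time recording the new intersection point, and observe that the constraints at the final step (choosing $\ell_{j_i}$, which must simultaneously extend the path and close it back to $\ell_{j_1}$) dominate those at every earlier step. Hence it suffices to show that when $i \le \lfloor\frac{k+9q-18}{3q-5}\rfloor$ there remains at least one admissible choice for $\ell_{j_i}$; the looser intermediate steps then never get stuck, and running the construction for each target $i$ up to the bound yields an induced $\BFC_{2i}$ of every such length.

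For part $(i)$, the heart of the argument is a bound on the number of lines that are \emph{forbidden} as a choice for $\ell_{j_i}$. I would split these into four groups: the $i-1$ already-used lines; the lines through one of the $i-2$ interior path points $\sing(\mathcal{L})\cap\ell_{j_t}\cap\ell_{j_{t+1}}$ (whose use would create a chord); the lines rendering the newly formed edge at $\sing(\mathcal{L})\cap\ell_{j_{i-1}}\cap\ell_{j_i}$ non-induced, i.e.\ those through $\sing(\mathcal{L})\cap\ell_{j_{i-1}}\cap\ell_{j_s}$ for the $i-3$ relevant indices $s$; and the lines spoiling the closing edge at $\sing(\mathcal{L})\cap\ell_{j_i}\cap\ell_{j_1}$, i.e.\ those through $\sing(\mathcal{L})\cap\ell_{j_1}\cap\ell_{j_s}$ for the remaining $i-4$ indices $s$. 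The key numerical input is that, since $t_r=0$ for $r>q$, every intersection point has multiplicity at most $q$ and therefore forbids at most $q-2$ lines. Summing gives at most $(i-1)+(q-2)\big[(i-2)+(i-3)+(i-4)\big]=(3q-5)i-(9q-17)$ forbidden lines, so an admissible $\ell_{j_i}$ exists as soon as $(3q-5)i\le k+9q-18$, which is exactly $i\le\lfloor\frac{k+9q-18}{3q-5}\rfloor$. For $q=3$ this recovers the count $4i-10$ of Theorem \ref{thm: max t_3}$(i)$, which is a useful sanity check.

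For part $(ii)$, the extra hypotheses are used to shave a constant off this count. The restriction that the only nonzero values among $t_1,\dots,t_q$ are $t_p$ and $t_q$ means every intersection point has multiplicity exactly $p$ or $q$, while the divisibility condition forces, through the incidence identity \eqref{comb count}, that every line contains at least one $p$-fold point: if some line met only $q$-fold points then $k-1$ would be a multiple of $q-1$, contradicting $q-1\nmid k-1$. I would exploit this guaranteed low-multiplicity point—routing the construction through it exactly as the double points are used in the proof of Theorem \ref{thm: max t_3}$(ii)$—to replace one worst-case $(q-2)$-contribution in the count above by a smaller one, improving the additive constant from $9q-18$ to $10q-18$ and hence the admissible range of $i$, while keeping the slope $3q-5$ unchanged.

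The main obstacle is the exact constant bookkeeping in part $(ii)$: one must track, through the same kind of intricate case analysis that makes the proof of Theorem \ref{thm: max t_3}$(ii)$ lengthy, precisely which edges can be forced to pass through $p$-fold points, making sure the savings accumulate to the claimed $q$ and that forcing a low-multiplicity edge does not silently re-introduce a forbidden configuration elsewhere. A secondary but necessary point, already implicit in part $(i)$, is to verify that the greedily chosen intersection points are pairwise distinct and that no intermediate choice paints the construction into a corner; these follow because each distinctness failure is itself a concurrence already excluded by the chord-avoidance groups, but they should be stated carefully rather than taken for granted.
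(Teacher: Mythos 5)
Your part $(i)$ is essentially the paper's own argument, carried out with the same bookkeeping: the four groups of forbidden lines you list (the $i-1$ used lines, the $i-2$ interior path points, the $i-3$ points of $\ell_{j_{i-1}}$ guarding the new edge, and the $i-4$ points of $\ell_{j_1}$ guarding the closing edge), each point forbidding at most $q-2$ further lines, give exactly the paper's count $k-\big\{(i-1)+\big[(i-2)+(i-3)+(i-4)\big](q-2)\big\}$ and hence the bound $i\le\lfloor\frac{k+9q-18}{3q-5}\rfloor$. If anything your version is cleaner, since the paper's proof of $(i)$ opens with an unnecessary (and unused) reduction to the case where all singular points are $q$-fold.

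Part $(ii)$, however, has a genuine gap, and it sits precisely where you placed your ``main obstacle.'' Your stated mechanism --- replace \emph{one} worst-case $(q-2)$-contribution by a $(p-2)$-contribution --- saves only $q-p\le q-2$, which is strictly less than the improvement of $q$ needed to move the additive constant from $9q-18$ to $10q-18$ while keeping the slope $3q-5$; so the arithmetic as described cannot reach the claimed bound. The paper instead splits into two cases: if some line carries at least two $p$-fold points it forces two of the path's own edges through them, arriving at at least $k-\{(i-1)+2(p-2)+(3i-11)(q-2)\}$ choices and the bound $\lfloor\frac{k-2p+11q-18}{3q-5}\rfloor$; if every line carries exactly one $p$-fold point it alternates $p$-edges and $q$-edges as in Theorem \ref{thm: max t_3}$(iii)$ and gets $\lfloor\frac{2(k+p+8q-18)}{p+5q-10}\rfloor$, a bound with a \emph{different} denominator. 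Note that neither of these expressions literally reduces to the stated $\lfloor\frac{k+10q-18}{3q-5}\rfloor$ for all admissible $p$ (for $p$ close to $q$ the first numerator $11q-2p-18$ drops below $10q-18$), so the constant in the statement is delicate and cannot be obtained by a generic ``shave one term'' argument; you would need to carry out the case analysis explicitly and track how many edges can be forced through $p$-fold points, which is exactly the step your proposal defers.
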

   \begin{proof}
   Let us assume that $\sing{L}$ has only $q$-fold points i.e. $t_r = 0$ for all $r < q$. this is only possible if $k$ is a multiple of $q(q-1)$.
       The proof is similar to the proof of $(i)$ of Theorem \ref{thm: max t_3}. We provide some details. 
       Let $\ell_{j_1}, \dots , \ell_{j_i}$ corresponds to an induced cycle $\BFC_{2i}$ in $G$ and let us choose $\ell_{j_1}$ and $\ell_{j_2}$ as first two lines. Then for third line $\ell_{j_3}$ we have $k - \{2+(q-2)\}$ choices and for fourth line $\ell_{j_4}$ we have $k - \{3 + 2(q-2) +(q-2)\}$ choices. Similarly for an induced cycle of length $2i$ we have $k -\big\{(i-1)+\{(i-2)+(i-3) +(i-4)\}(q-2) \big\}$ choices for $\ell_{j_i}$. This proves the existence of an induced cycle of length $2i$ for every $ i \leq \lfloor {\frac{k+ 9q -18}{3q-5}} \rfloor$ .

       Proof of $(ii)$: $q-1 \nmid k-1$ in the hypothesis implies that every line $\ell_i$ in $\mathcal{L}$ has at least one $p$-point in it. 
       
      \textbf{Case I:}  At least one line in $\mathcal{L}$ has more than one $p$-point in it. 
      
      The proof in this case is similar to part $(ii)$ of Theorem \ref{thm: max t_3}.

       W.l.o.g assume that $\ell_{j_2}$
 has two $p$-points in it and these points are $\sing(\mathcal{L}) \cap \ell_{j_1} \cap \ell_{j_2}$ and $\sing(\mathcal{L}) \cap \ell_{j_2} \cap \ell_{j_3}$. So, we have at least $k - \{3+ 2(p-2) +(q-2)\}$ choices for $\ell_{j_4}$.  If any one of $\sing(\mathcal{L}) \cap \ell_{j_4} \cap \ell_{j_1}$, $\sing(\mathcal{L}) \cap \ell_{j_4} \cap \ell_{j_2}$ or $\sing(\mathcal{L}) \cap \ell_{j_4} \cap \ell_{j_3}$ is a $p$-point, we have $k - \{ 4+ 3(p-2) +2(q-2)\}$ choices for $\ell_{j_5}$. Otherwise, we choose $\sing(\mathcal{L}) \cap \ell_{j_4} \cap \ell_{j_2}$ a $p$-point. Proceeding similarly to $(ii)$ of Theorem \ref{thm: max t_3} we observe that if we wish to have an induced cycle of length $2i$ corresponding to $\ell_{j_1}. \dots, \ell_{j_i}$ then we have at least $k - \{(i-1) + 2(p-2) + (3i-11)(q-2)\} $ choices for $\ell_{j_i}$. Hence, the associated Levi graph $G$ of $\mathcal{L}$ has induced cycles of length $2i$ for all $ i \leq \lfloor {\frac{k -2p +11q -18}{3q-5}} \rfloor $.

\textbf{Case II:} All the lines in $\mathcal{L}$ has exactly one $p$-point in it. 

 We proceed as in the proof of $(iii)$ of Theorem \ref{thm: max t_3}. W.l.o.g we assume that $\sing(\mathcal{L}) \cap \ell_{j_1} \cap \ell_{j_2}$ is a $p$-point. Then we have $k-\{2 +(p-2)\}$ choices for $\ell_{j_3}$. Since, $\sing(\mathcal{L}) \cap \ell_{j_2} \cap \ell_{j_3}$ must be a $q$-point, we can choose $\ell_{j_4}$ such that $\sing(\mathcal{L}) \cap \ell_{j_3} \cap \ell_{j_4}$ is a $p$-point. So, we have at least $k -\{4 + 2(p-2) + 3(q-2)\}$ choices for $\ell_{j_5}$. 
 If $i$ is even we can choose  $\sing(\mathcal{L}) \cap \ell_{j_1} \cap \ell_{j_2}$, $\sing(\mathcal{L}) \cap \ell_{j_3} \cap \ell_{j_4}$, \dots, $\sing(\mathcal{L}) \cap \ell_{j_{i-3}} \cap \ell_{j_{i-2}}$ to be $p$-points and as a consequence $\sing(\mathcal{L}) \cap \ell_{j_2} \cap \ell_{j_3}$, $\sing(\mathcal{L}) \cap \ell_{j_4} \cap \ell_{j_5}$, \dots, $\sing(\mathcal{L}) \cap \ell_{j_{i-2}} \cap \ell_{j_{i-1}}$ all $q$-points. So, to have an induced cycle of length $2i$ we have $k -\{ (i-1) + \frac{(i-2)(p-2)}{2} + (\frac{5i-16}{2})(q-2)\}$ choices for $\ell_{j_i}$.

 If $i$ is odd then we have $k -\{(i-1) + \frac{(i-1)(p-2)}{2} + (\frac{5i-17}{2})(q-2)\}$ choices for $\ell_{j_i}$.

 Since $p < q$, to have an induced cycle of length $2i$ corresponding to $\ell_{j_1}, \dots, \ell_{j_i}$ we have at least $k -\{ (i-1) + \frac{(i-2)(p-2)}{2} + (\frac{5i-16}{2})(q-2)\}$ choices for $\ell_{j_i}$. So, the associated Levi graph of $\mathcal{L}$ has induced cycles of length $2i$ for all $i \leq \lfloor {\frac{2(k+p+8q -18)}{p+5q-10}} \rfloor$.
       
   \end{proof}

Next, we consider some line arrangements in $\mathbb{P}_{\mathbb{C}}^2$ with the largest multiplicity $q > 3$ and look into the existence of induced cycles in the associated Levi graphs.

\vskip 3mm

 \textbf{Hesse arrangement of lines}:

 Hesse configuration $\mathcal{L}_{\mathcal{H}}$ is a line arrangement of $12$ lines with $t_2 =12, t_4 = 9$ and $t_r =0$ otherwise.  

  \begin{figure}[H]
				\begin{tikzpicture}[scale=1.5]
\draw [line width=2pt] (-3.3,2)-- (0,2);
\draw [line width=2pt] (-1,2.3)-- (-1,-1);
\draw [line width=2pt] (-4,0)-- (-0.7,0);
\draw [line width=2pt] (-3,-0.3)-- (-3,3);
\draw [line width=2pt] (-3.3,1)-- (-0.7,1);
\draw [line width=2pt] (-3.25,-0.25)-- (-0.75,2.25);
\draw [line width=2pt] (-3.25,2.25)-- (-0.75,-0.25);
\draw [line width=2pt] (-2,2.3)-- (-2,-0.3);
\draw [line width=2pt][color=brown] (-1,1)-- (-2.25,-0.25);
\draw [line width=2pt][color=green] (-2,0)-- (-3.25,1.25);
\draw [line width=2pt][color=blue] (-3,1)-- (-1.75,2.25);
\draw [line width=2pt][color=red] (-2,2)-- (-0.75,.75);
\draw [line width=2pt][color=brown] (-3,2)-- (-3.25,1.75);
\draw [line width=2pt][color=green] (-1,2)-- (-1.25,2.25);
\draw [line width=2pt][color=blue] (-1,0)-- (-0.75,0.25);
\draw [line width=2pt][color=red] (-3,0)-- (-2.75,-0.25);
\draw [color=brown][shift={(-1.6388888888888888,2.2222222222222223)},line width=2pt]  plot[domain=-1.0891326492400353:3.303430084828216,variable=\t]({1*1.3791323985896011*cos(\t r)+0*1.3791323985896011*sin(\t r)},{0*1.3791323985896011*cos(\t r)+1*1.3791323985896011*sin(\t r)});
\draw [color=green][shift={(-0.7694877049180329,0.6347438524590164)},line width=2pt]  plot[domain=-2.6653558114207323:1.7380605934191202,variable=\t]({1*1.3845794547740493*cos(\t r)+0*1.3845794547740493*sin(\t r)},{0*1.3845794547740493*cos(\t r)+1*1.3845794547740493*sin(\t r)});
\draw [color=red][shift={(-3.2405482456140353,1.3702741228070177)},line width=2pt]  plot[domain=0.46972422600434666:4.886165863173627,variable=\t]({1*1.3912277419971655*cos(\t r)+0*1.3912277419971655*sin(\t r)},{0*1.3912277419971655*cos(\t r)+1*1.3912277419971655*sin(\t r)});
\draw [color=blue][shift={(-2.3675510204081633,-0.23510204081632663)},line width=2pt]  plot[domain=-4.239138298833666:0.17025042724226078,variable=\t]({1*1.3876126127328947*cos(\t r)+0*1.3876126127328947*sin(\t r)},{0*1.3876126127328947*cos(\t r)+1*1.3876126127328947*sin(\t r)});
\begin{scriptsize}
\fill  (-3,2) circle (3pt);
\fill  (-2,2) circle (3pt);
\fill  (-1,2) circle (3pt);
\fill  (-3,1) circle (3pt);
\fill  (-2,1) circle (3pt);
\fill  (-1,1) circle (3pt);
\fill  (-3,0) circle (3pt);
\fill  (-2,0) circle (3pt);
\fill  (-1,0) circle (3pt);
\draw (-3.4,2) node {$\ell_1$};
\draw (-4.1,0) node {$\ell_3$};
\draw (-3.4,1) node {$\ell_2$};
\draw (-3,-0.4) node {$\ell_4$};
\draw (-2,-0.4) node {$\ell_5$};
\draw (-1,-1.1) node {$\ell_6$};
\draw (-3.3,-0.3) node {$\ell_7$};
\draw (-0.65,-0.3) node {$\ell_8$};
\draw (-1.5,3.4) node {$\ell_9$};
\draw (0.4,0.45) node {$\ell_{10}$};
\draw (-4.4,1.5) node {$\ell_{11}$};
\draw (-2.5,-1.4) node {$\ell_{12}$};
\end{scriptsize}
              \end{tikzpicture}
				\caption{Hesse arrangement}\label{fig: Hesse}
			\end{figure}
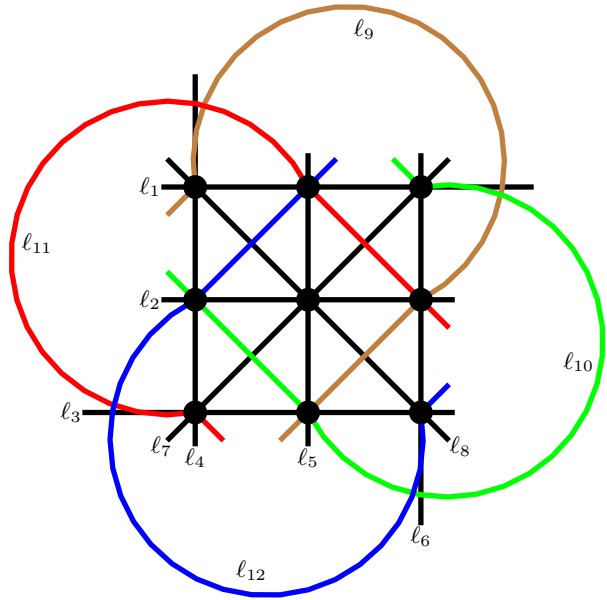

 In figure \ref{fig: Hesse} above, let us denote the horizontal lines as $\ell_1, \ell_2, \ell_3$ from top to down, the vertical lines as $\ell_4, \ell_5, \ell_6$ from left to right, diagonals as $\ell_7, \ell_8$ and broken diagonals as $\ell_9,\ell_{10}, \ell_{11}, \ell_{12}$ clockwise.

 The $4$-points of $\mathcal{L}_{\mathcal{H}}$ are:

 \begin{align*}
     \sing(\mathcal{L}_{\mathcal{H}}) \cap \ell_1 \cap \ell_4 \cap \ell_7\cap \ell_{10} = \{p_1\}, \sing(\mathcal{L}_{\mathcal{H}}) \cap \ell_1 \cap \ell_5 \cap \ell_9\cap \ell_{12} = \{p_2\}, \\  \sing(\mathcal{L}_{\mathcal{H}}) \cap \ell_1 \cap \ell_6 \cap \ell_8\cap \ell_{11} = \{p_3\},
     \sing(\mathcal{L}_{\mathcal{H}}) \cap \ell_2 \cap \ell_4 \cap \ell_{11}\cap \ell_{12} = \{p_4\}, \\ \sing(\mathcal{L}_{\mathcal{H}}) \cap \ell_2 \cap \ell_5 \cap \ell_7\cap \ell_8 = \{p_5\}, \sing(\mathcal{L}_{\mathcal{H}}) \cap \ell_2 \cap \ell_6 \cap \ell_9\cap \ell_{10} = \{p_6\}, \\
     \sing(\mathcal{L}_{\mathcal{H}}) \cap \ell_3 \cap \ell_4 \cap \ell_8\cap \ell_9 = \{p_7\}, \sing(\mathcal{L}_{\mathcal{H}}) \cap \ell_3 \cap \ell_5 \cap \ell_{10}\cap \ell_{11} = \{p_8\}, \\
     \sing(\mathcal{L}_{\mathcal{H}}) \cap \ell_3 \cap \ell_6 \cap \ell_7\cap \ell_{12} = \{p_9\}.
     \end{align*}
     Remaining elements of $\sing(\mathcal{L}_{\mathcal{H}})$ are double points. Whenever $\sing(\mathcal{L}_{\mathcal{H}}) \cap \ell_i \cap \ell_j$ is a double point, we denote it by $p_{ij}$.

      Note that 
      \begin{align*}
          \{p_1, \dots, p_9\} \subseteq \{ \sing(\mathcal{L}_{\mathcal{H}}) \cap \ell_{r_1}, \sing(\mathcal{L}_{\mathcal{H}}) \cap \ell_{r_2}, \sing(\mathcal{L}_{\mathcal{H}}) \cap \ell_{r_3} \}
      \end{align*}

       where $(r_1,r_2,r_3)$ is one of $(1,2,3), (4,5,6), (7,9,11)$ or $(8,10,12)$. Let us assume that $(j_1,j_2,j_3) = (1,2,3)$.

     We give explicit descriptions of induced cycles $\BFC_{10}, \BFC_{12}$ and show that there does not exists induced cycles $\BFC_{2i}$ for $i  \geq 7$. 

     \begin{theorem}\label{thm: hesse arrangement}
         The Levi graphs associated to Hesse arrangement $\mathcal{L}_{\mathcal{H}}$ contains induced cycles of length $2i$ for $i \leq 6$ and the length of longest induced cycles is $12$.
     \end{theorem}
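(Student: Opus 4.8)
The plan is to exploit the fact that the Hesse configuration is exactly the incidence structure of the affine plane $\mathrm{AG}(2,3)$. Concretely, the twelve lines split into four \emph{parallel classes} of three lines each; in the notation fixed before Theorem~\ref{thm: hesse arrangement} these are $\{\ell_1,\ell_2,\ell_3\}$, $\{\ell_4,\ell_5,\ell_6\}$, $\{\ell_7,\ell_9,\ell_{11}\}$, $\{\ell_8,\ell_{10},\ell_{12}\}$, which are precisely the triples $(r_1,r_2,r_3)$ recorded after the list of $4$-points. I would first extract the resulting dichotomy: two lines in the \emph{same} class meet in one of the twelve double points, two lines in \emph{distinct} classes meet in one of the nine $4$-points, and each $4$-point is the common intersection of exactly one line from each of the four classes (so each $\ell_i$ carries exactly three of the nine $4$-points and two double points). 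Since a point of $\sing(\mathcal{L}_{\mathcal{H}})$ together with its incident lines is what produces a vertex of $G$ and its neighbours, an induced $\BFC_{2i}$ is the same datum as a cyclic sequence of $i$ distinct lines in which every consecutive meeting point carries no third line of the sequence.

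For the existence half I would use one uniform construction. Choose the $i$ lines from at most \emph{two} parallel classes, which is possible for every $i\le 6$ since two classes already contain six lines. Then every meeting point of two chosen lines is either a double point or a $4$-point whose two remaining lines lie in the two \emph{omitted} classes, hence are unchosen; consequently no meeting point carries a third chosen line. Thus all $\binom{i}{2}$ meeting points are distinct and admissible, so the ``meeting graph'' on the chosen lines is a complete graph $K_i$ and any Hamiltonian cycle of it realises an induced $\BFC_{2i}$. Taking $i=3,4,5,6$ yields $\BFC_6,\BFC_8,\BFC_{10},\BFC_{12}$ (the first two also follow from \cite[Theorem 5.1]{KSS24} and Theorem~\ref{thm: cycle of length 8}, as $t_{12}=t_{11}=0$); note that $\BFC_4$ cannot occur in any Levi graph because two distinct lines meet in a single point.

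For the upper bound I would show that no induced cycle uses seven or more lines. Fix a candidate set $S$ of lines and form the auxiliary graph $\Gamma_S$ on $S$ whose edges are the same-class pairs (double points) together with the different-class pairs whose $4$-point carries no third line of $S$; by the translation above, an induced cycle on the lines of $S$ is exactly a Hamiltonian cycle of $\Gamma_S$. If $|S|\ge 7$, then since two classes contain only six lines at least three classes are represented, and moreover, because each chosen line lies on exactly three $4$-points, the chosen-line incidences satisfy $\sum_{q} m_q = 3|S|\ge 21 > 2\cdot 9$, where $m_q$ counts chosen lines through the $4$-point $q$; hence some $4$-point carries $m_q\ge 3$ chosen lines. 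Each such concurrence simultaneously deletes the corresponding edges of $\Gamma_S$ and blocks that point from being used. I expect the operative mechanism to be that these forced concurrences either drop a vertex of $\Gamma_S$ to degree $\le 1$, or force three distinct cycle-neighbours onto a single line (as already happens for sets of the shape $\{\ell_1,\ell_2,\ell_3,\ell_4,\ell_5,\ell_7,\ell_9\}$, where the degree-two vertices force three prescribed neighbours onto $\ell_1$); in either event $\Gamma_S$ is non-Hamiltonian. To keep this finite I would reduce to a few representative class-distributions $(n_1,n_2,n_3,n_4)$ with $\sum n_j\ge 7$ and $n_j\le 3$, using the symmetries of the Hesse configuration that permute the four classes and the lines within each class.

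The hard part will be exactly this last step. The obstruction to a long induced cycle is \emph{not} a crude degree count: the estimate above still leaves $\Gamma_S$ with potentially enough admissible edges to be Hamiltonian, so the real content is the rigid way the nine $4$-points \emph{force} neighbours in any prospective cycle. The delicate point is to verify, for each surviving distribution, that the forced incidences can never be assembled into a single $2$-regular spanning subgraph of $S$; presenting these forcings transparently, rather than by brute enumeration of all $\binom{12}{7}$ subsets, is where the genuine work lies. Once it is settled for $|S|=7$, the same forcing argument, now with strictly more concurrences (indeed $\sum_q m_q=3|S|$ grows with $|S|$), disposes of $|S|\ge 8$ as well, and combined with the constructions above this pins the length of a longest induced cycle at $\BFC_{12}$.
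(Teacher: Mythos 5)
Your existence half is correct and in fact cleaner than the paper's: the observation that the twelve lines fall into four parallel classes, that same-class pairs meet in double points while cross-class pairs meet in $4$-points with one line from each class, and that therefore any $i\le 6$ lines drawn from at most two classes have all $\binom{i}{2}$ meeting points free of a third chosen line, does give induced $\BFC_{2i}$ for $3\le i\le 6$ in one stroke (the paper instead exhibits explicit $\BFC_{10}$ and $\BFC_{12}$ and quotes earlier results for $\BFC_6,\BFC_8$). Your translation of ``induced cycle in the Levi graph'' into ``cyclic ordering of lines whose consecutive meeting points carry no third chosen line'' is also the right one, and the count $\sum_q m_q=3|S|>18$ forcing some $4$-point to carry three chosen lines when $|S|\ge 7$ is valid.

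The gap is in the upper bound, and you name it yourself: you never verify that the forced concurrences actually preclude a Hamiltonian cycle of $\Gamma_S$. Knowing that at least one $4$-point carries three chosen lines only deletes three edges from a graph that may still have far too many edges to be ruled non-Hamiltonian by degree considerations, and you explicitly defer ``the genuine work'' of checking the surviving class-distributions. That deferred step is precisely the substance of the paper's argument, which it organizes as three structural claims about the multiset $S$ of consecutive meeting points (not all can be $4$-points; at most three can be double points and no two of those adjacent; long runs of consecutive $4$-points are impossible) followed by a case analysis of the remaining configurations for $i=7$ and $i\ge 8$. Without an analogue of that analysis your proposal does not establish that $12$ is the maximum length. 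A secondary issue: your closing sentence suggests the $|S|\ge 8$ cases follow from the $|S|=7$ case ``with strictly more concurrences,'' but an induced $\BFC_{2i}$ does not contain an induced $\BFC_{14}$, so each cardinality needs its own (if similar) argument; the paper likewise treats $i\ge 8$ separately after settling $i=7$.
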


     \begin{proof}

      We observe that $\ell_3p_{13}\ell_1p_{12}\ell_2p_4\ell_4p_{45}\ell_5p_8\ell_3$ corresponds to an induced $\BFC_{10}$ in the associated Levi graph $G_{\mathcal{H}}$ and $\ell_3p_{13}\ell_1p_{12}\ell_2p_4\ell_4p_{45}\ell_5p_{56}\ell_6p_9\ell_3$ corresponds to an induced $\BFC_{12}$ in the associated Levi graph $G_{\mathcal{H}}$.

     Suppose there exists an induced $\BFC_{2i}$ for $i \geq 7$ corresponding to $\ell_{j_1}, \dots, \ell_{j_i}$. Let
  $S = \{ \sing(\mathcal{L}_{\mathcal{H}}) \cap \ell_{j_t}\cap \ell_{j_{t+1}}, \, \, t=1, \dots, i-1, \, \sing(\mathcal{L}_{\mathcal{H}}) \cap \ell_{j_{i-1}} \cap \ell_{j_i} \}$. we say two points $p$ and $p'$ of $S$ are adjacent if $p = \sing(\mathcal{L}_{\mathcal{H}}) \cap \ell_{j_{t-1}} \cap \ell_{j_t}$ and $p' = \sing(\mathcal{L}_{\mathcal{H}}) \cap \ell_{j_t} \cap \ell_{j_{t+1}} $ for some $t\in \{1, \dots, i\}$. Three points $p,p',p''$ of $S$ are said to be occur consecutively if $p,p'$ are adjacent and $p',p''$ are adjacent. Before going to the proof, we make three claims.
   \vskip 2mm

   $\bullet$ \textit{Claim I:} All points of $S$ can not be $4$-points.

   $\bullet$ \textit{Claim II:} There can be at most three double points in $S$ and no two of them are consecutive.

   $\bullet$ \textit{Claim III:} if $S$ contain at least one double point then four or more $4$-points of $S$ can't be consecutive and three $4$- points in $S$ can occur consecutively if one of the $4$-point is adjacent to a double point.

   \vskip 2mm

   \textit{Proof of Claim I:} Suppose all the points in $S$ are $4$-points. Since all the lines of $\mathcal{L}_{\mathcal{H}}$ have same number of double and $4$-points on them, wlog we assume that $\ell_{j_1} = \ell_1$. Now we know that all the $4$-points belong to $\sing(\mathcal{L}_{\mathcal{H}}) \cap \ell_{r_1}$ for $r_1 \in \{1,2,3\}$. If $j_t = 2$ for some $t \in \{ 2, \dots, i\}$, there will be at least three $4$-points in $S$ that lie on $\ell_3$ and at least two of them are adjacent. Let, $\sing(\mathcal{L}_{\mathcal{H}}) \cap \ell_{j_{s-1}} \cap \ell_{j_s}$ and $\sing(\mathcal{L}_{\mathcal{H}}) \cap \ell_{j_s} \cap \ell_{j_{s+1}}$ belong to $\sing(\mathcal{L}_{\mathcal{H}}) \cap \ell_3$ for some $s \in \{2, \dots, i\}$. Then $j_s =3$. There still exists a third $4$-point that belongs to $\sing(\mathcal{L}_{\mathcal{H}}) \cap \ell_3$, a contradiction. We can make similar conclusion if $j_t = 3$ for some $t \in \{2, \dots, i\}$. So, $\{j_2, \dots, j_i \} \cap \{2,3\} = \emptyset$. In the same way we can show that $|\{j_2, \dots, j_i \} \cap \{r_1,r_2,r_3\}| \leq 1$ for $(r_1,r_2, r_3\} = (4,5,6), (7,9,11), (8,10,12)$. This contradicts the fact that all points of $S$ are $4$-points. 

   \vskip 2mm

   \textit{Proof of Claim II:} Suppose there are four double points in $S$. Then at least two of these double points are consecutive. Without loss of generality, we assume that $\sing(\mathcal{L}_{\mathcal{H}}) \cap \ell_{j_1}\cap \ell_{j_2}$ and $\sing(\mathcal{L}_{\mathcal{H}}) \cap \ell_{j_2}\cap \ell_{j_3}$ are double points. Then $(j_1, j_2, j_3) \in \{ (1,2,3), (4,5,6), ( 7,9,11), (8,10,12)\}$. Let us assume that $(j_1,j_2,j_3) = (1,2,3)$. Then, none of $\sing(\mathcal{L}_{\mathcal{H}}) \cap \ell_{j_t}\cap \ell_{j_{t+1}}$ for $t = 4, \dots, i-1$ can be $4$-points, as all $4$-points pass through $\sing(\mathcal{L}_{\mathcal{H}}) \cap \ell_{r_1}$, for $r_1 \in \{1,2,3\}$.
     
      For the same reason, all of $\sing(\mathcal{L}_{\mathcal{H}}) \cap \ell_{j_4}\cap \ell_{j_5}, \sing(\mathcal{L}_{\mathcal{H}}) \cap \ell_{j_5}\cap \ell_{j_6}, \sing(\mathcal{L}_{\mathcal{H}}) \cap \ell_{j_6}\cap \ell_{j_7}$ must be double points. But then possibilities of  $(j_4, j_4, j_5)$ are  $(4,5,6)$ or $(7,9,11)$ or $(8, 10,12)$.
      
      Now $\sing(\mathcal{L}_{\mathcal{H}}) \cap \ell_{j_6}\cap \ell_{j_7}$ can't be a double point, since $\ell_{j_6}$ has only two double points in it and these are $\sing(\mathcal{L}_{\mathcal{H}}) \cap \ell_{j_6}\cap \ell_{j_5}$ and $\sing(\mathcal{L}_{\mathcal{H}}) \cap \ell_{j_6}\cap \ell_{j_4}$.
      
      Similar argument works when $(j_1,j_2,j_3) = (4,5,6)$ or  $(7,9,11)$ or $(8,10,12)$.

      So, $S$ has at most three double points. If any two of these double points are consecutive, similar argument as above draw a contradiction. Hence, no two of these double points are consecutive.

      \vskip 2mm
      
      \textit{Proof of Claim III:}
       Let  $\sing(\mathcal{L}_{\mathcal{H}}) \cap \ell_{j_1} \cap \ell_{j_2}$ be a double point. Then $j_1, j_2$ belongs to one of the following sets: $\{1,2,3\}, \{4,5,6\}, \{7,9,11\}, \{8,10,12\}$. Let us consider the case where $j_1, j_2 \in \{1,2,3\}$. Other cases can be proved similarly. Now, if $j_1,j_2 \in \{1,2,3\}, (j_1,j_2)$ can be one 
 of $(1,2), (1,3)$ or $(2,3)$. Let $(j_1, j_2) = (1,2)$. For other choices of $(j_1, j_2) proo$ As all $4$- points of $\sing(\mathcal{L}_{\mathcal{H}})$ pass through one of $\ell_1, \ell_2$ or $\ell_3$, any $4$-point in $S$ not adjacent to $\sing(\mathcal{L}_{\mathcal{H}}) \cap \ell_1 \cap \ell_2$ pass through $\ell_3$. Now assume that three $4$-points not adjacent to $\sing(\mathcal{L}_{\mathcal{H}}) \cap \ell_1 \cap \ell_2$ occur consecutively. Then all three of them pass through $\ell_3$, a contradiction. Similarly if four or more $4$- points occur consecutively, we have three consecutive $4$-points which are not adjacent to $\sing(\mathcal{L}_{\mathcal{H}}) \cap \ell_1 \cap \ell_2$. So, Four or more $4$-points in $S$ can not occur consecutively.

 Now,  let $S$ has at most two double points. . If $S$ has no double point, then we get a contradiction via \textit{Claim I}. If $S$ has one double point, we get a contradiction via \textit{Claim II}. If $S$ has two double points, at least three $4$-points occur consecutively. If these all of them are not adjacent to a double point, we are done. So, assume that three $4$-points of $S$ are consecutive and one of them is adjacent to a double point. 

 As in the proof of \textit{Claim II}, we assume that $\ell_{j_1} = \ell_1. \ell_{j_2} = \ell_2$ and $\sing(\mathcal{L}_{\mathcal{H}}) \cap \ell_1 \cap \ell_2$ is a double point. Now without loss of generality assume that three consecutive $4$-points are $\sing(\mathcal{L}_{\mathcal{H}}) \cap \ell_1 \cap \ell_{j_i}, \sing(\mathcal{L}_{\mathcal{H}}) \cap \ell_{j_i} \cap \ell_{j_{i-1}}$ and $\sing(\mathcal{L}_{\mathcal{H}}) \cap \ell_{j_{i-1}} \cap \ell_{j_{i-2}}$. Now, suppose $\ell_{j_{i-1}} \neq \ell_3$. Since both $\sing(\mathcal{L}_{\mathcal{H}}) \cap \ell_{j_i} \cap \ell_{j_{i-1}}$ and $\sing(\mathcal{L}_{\mathcal{H}}) \cap \ell_{j_{i-1}} \cap \ell_{j_{i-2}}$ pass through $\ell_3$, these two points belong to $\sing(\mathcal{L}_{\mathcal{H}}) \cap \ell_{j_{i-1}} \cap \ell_3$, which is not possible. So, $\ell_{j_{i-1}} = \ell_3$. Now since two double points of $S$ are not adjacent, $\sing(\mathcal{L}_{\mathcal{H}}) \cap \ell_{j_3} \cap \ell_{j_4}$ is a $4$-point. But $\sing(\mathcal{L}_{\mathcal{H}}) \cap \ell_{j_3} \cap \ell_{j_4}$ is on $\ell_1, \ell_2$ or $\ell_3$, a contradiction.

 Now for $i=7$ we assume that $S$ has exactly three double points and none of them are consecutive. Then, two $4$-points are consecutive. Let us also assume that $(j_1,j_2,j_3) = (1,2,3)$. So, both of the two consecutive $4$-points lie on one of $\ell_1, \ell_2$ or $\ell_3$. Without loss of generality, we assume that $\ell_{j_1} = \ell_1$ and $\sing(\mathcal{L}_{\mathcal{H}}) \cap \ell_1 \cap \ell_{j_2}$, $\sing(\mathcal{L}_{\mathcal{H}}) \cap \ell_1 \cap \ell_{j_7}$ are two adjacent $4$-points. There are few possibilities for $j_2$ and $j_7$. We consider these one by one.
      
  $\bullet$ Let us assume that $\sing(\mathcal{L}_{\mathcal{H}}) \cap \ell_1 \cap \ell_{j_2} =p_1$ and $\sing(\mathcal{L}_{\mathcal{H}}) \cap \ell_1 \cap \ell_{j_7} = p_2$. Now we have to make choices for $\ell_{j_2}$ and $\ell_{j_7}$. Since both $\sing(\mathcal{L}_{\mathcal{H}}) \cap \ell_{j_2} \cap \ell_{j_3}$ and $\sing(\mathcal{L}_{\mathcal{H}}) \cap \ell_{j_6}\cap \ell_{j_7}$ has to be double points, if we choose $\ell_{j_2} = \ell_4$, $\ell_{j_7}$ can't be equal to $\ell_5$. Hence, $\ell_{j_7}$ is either $\ell_9$ or $\ell_{12}$. First, we assume, $\ell_{j_2} = \ell_4$ and $\ell_{j_7} = \ell_9$. We know that two double points on $\ell_4$ are $\sing(\mathcal{L}_{\mathcal{H}}) \cap \ell_4 \cap \ell_5$ and $\sing(\mathcal{L}_{\mathcal{H}}) \cap \ell_4 \cap \ell_6$. Since, $p_2 \in \sing(\mathcal{L}_{\mathcal{H}}) \cap \ell_5$, we must have $\ell_{j_3} =  \ell_6$. Similarly, $\ell_9$ has two double points on it, $\sing(\mathcal{L}_{\mathcal{H}}) \cap \ell_9 \cap \ell_7$ and $\sing(\mathcal{L}_{\mathcal{H}}) \cap \ell_9 \cap \ell_{11}$.   Since $p_1 \in \sing(\mathcal{L}_{\mathcal{H}}) \cap \ell_7$, only possibility of $\ell_{j_6}$ is $\ell_{11}$. Since all the three double points are non-adjacent, $\sing(\mathcal{L}_{\mathcal{H}}) \cap \ell_6 \cap \ell_{j_4}$ and $\sing(\mathcal{L}_{\mathcal{H}}) \cap \ell_9 \cap \ell_{j_5}$ are $4$-points. As $p_6 \in \sing(\mathcal{L}_{\mathcal{H}}) \cap \ell_9$, $\ell_{j_4} \neq \ell_2, \ell_{10}$. So, we have to consider $p_9= \sing(\mathcal{L}_{\mathcal{H}}) \cap \ell_3 \cap \ell_6 \cap \ell_7 \cap \ell_{12}$ and the only choice for $\ell_{j_4}$ is $\ell_3$ in that case. For the choice of $\ell_{j_6}$, we know that $\sing(\mathcal{L}_{\mathcal{H}}) \cap \ell_3 \cap \ell_{j_5}$ is a double point. So, the only choice for $\ell_{j_5}$ is $\ell_2$, as we have already chosen $\ell_{j_1}$ to be $\ell_1$. Now, $\sing(\mathcal{L}_{\mathcal{H}}) \cap \ell_2 \cap \ell_{11}$ must be a $4$-point which is $p_4$. But $p_4$ is on $\ell_4$, which is $\ell_{j_2}$, a contradiction.

  Now assume that $\ell_{j_2} = \ell_4$ and $\ell_{j_7} = \ell_{12}$. As in the previous case, $\ell_{j_3} = \ell_6$. Now $\sing(\mathcal{L}_{\mathcal{H}}) \cap \ell_6 \cap \ell_{j_4}$ is a $4$-point and the possible choices are $p_6$ and $p_9$. But $p_9$ is on $\ell_{12}$. So, $\sing(\mathcal{L}_{\mathcal{H}}) \cap \ell_6 \cap \ell_{j_4}$ is $p_9$ and $\ell_{j_4} = \ell_2$. Also, $\ell_{j_6} \neq \ell_{10}$, as $p_1 \in \sing(\mathcal{L}_{\mathcal{H}}) \cap \ell_{10}$. So, $\ell_{j_6}$ has to be $\ell_8$. Next we can easily see that the only choice for $\ell_{j_5}$ is $\ell_3$. $j_5 = 3, j_6 = 8$ and $\sing(\mathcal{L}_{\mathcal{H}}) \cap \ell_3 \cap \ell_8$ is a $4$-point which is $p_7$. But, $p_7$ is on $\ell_4$, a contradiction.

  Other choices for $(j_2, j_7)$ are $(5,7), (7, 12), (5, 10)$ and $(9, 10)$. In those cases the proofs are similar and the conclusion is the same, hence omitted.

  Finally, we have have two more choices $(p_1, p_3)$ and $(p_2, p_3)$ for $\sing(\mathcal{L}_{\mathcal{H}}) \cap \ell_1 \cap \ell_{j_2},\sing(\mathcal{L}_{\mathcal{H}}) \cap \ell_1 \cap \ell_{j_7})$. In each case we reach a contradiction as observed above. The proofs are similar, hence the details are not illustrated. 

  For $i \geq 8$ with $S$ having three double points, we reach a contradiction in all cases via \textit{Claim I, Claim II} or \textit{Claim III} barring one case. This case is for $i=8$ and when $S$ contains three double points, two sets of adjacent $4$-points but no three $4$-points are consecutive. Mimicking the proof of $i=7$ with three double points in $S$, we can reach a contradiction in this case too.

  \end{proof}

 \textbf{Ceva's arrangements of lines:}
Let us consider the Ceva's arrangement $\mathcal{L}_{\mathcal{C}}$ of $3n$ lines given by:
$$(x^n-y^n)(y^n-z^n)(x^n-z^n).$$

 For $n \geq 4$, we have $t_3 = n^2, t_n =3$ and $t_r= 0$ elsewhere. In the theorem below, we study the induced cycles in the associated Levi graph $G_{\mathcal{C}}$.
 
\begin{theorem}
    For Ceva's lines arrangement $\mathcal{L}_{\mathcal{C}}$, the associated Levi graph $G_{\mathcal{C}}$ contain induced cycles $\BFC_{2i}$ for $4\leq i\leq 2n+1$.
\end{theorem}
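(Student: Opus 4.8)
The plan is to exploit the pencil structure of $\mathcal{L}_{\mathcal{C}}$. Fixing a primitive $n$th root of unity $\zeta$, the $3n$ lines split into three pencils, namely $a_r\colon x=\zeta^{r}y$ through $[0:0:1]$, $b_s\colon y=\zeta^{s}z$ through $[1:0:0]$, and $c_t\colon x=\zeta^{t}z$ through $[0:1:0]$, where $r,s,t$ run over $\mathbb{Z}_n$. The three pencil vertices are the $n$-fold points, say $A,B,C$, and a short computation shows that $a_r,b_s,c_t$ are concurrent exactly when $t\equiv r+s \pmod n$; these concurrences give all $n^2$ triple points, which I denote $T_{rs}$ (lying on $a_r$, $b_s$, $c_{r+s}$). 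In the Levi graph an induced $\BFC_{2i}$ is a cyclic list of $i$ distinct lines together with $i$ chosen intersection points, each chosen point lying on exactly the two cyclically-adjacent lines. Two cycle-lines can meet at a chosen point only in two ways: either two lines of one pencil meet at one of $A,B,C$ (usable at most once per pencil, and then no third line of that pencil may appear in the list), or two lines of different pencils meet at some $T_{rs}$ (and then its unique third line must be kept out of the list). Every construction below is just a choice of indices making these forbidden ``third lines'' avoid the list.

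First I would settle the even lengths. Using only the pencils $\mathcal{A}$ and $\mathcal{B}$, the triple points $T_{rs}$ are exactly the edges of the complete bipartite graph $K_{n,n}$ on the $a$- and $b$-indices, and since no $c$-line is used the forbidden third line of each chosen point is automatically absent; moreover a non-chosen edge of $K_{n,n}$ corresponds to a triple point that is simply not a vertex of the list, so it cannot create a chord. Hence \emph{any} $2m$-cycle of $K_{n,n}$ lifts to an induced $\BFC_{4m}$, and as $n\ge 4$ such cycles exist for every $2\le m\le n$. This yields induced $\BFC_{2i}$ for all even $i$ with $4\le i\le 2n$.

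For the odd lengths, including the extremal $i=2n+1$, a two-pencil list has even size, so the list must meet all three pencils. Here I would use the $n$-fold point $A$ exactly once: keep the two lines $a_0,a_1$ joined through $A$ (so no further $a$-line is allowed), and close up by an alternating path through $\mathcal{B}$ and $\mathcal{C}$ of length $i-2$ from $a_1$ back to $a_0$. Along this path an adjacency $b_s,c_t$ is permissible precisely when its forbidden $a$-index $t-s\notin\{0,1\}$ (so that neither $a_0$ nor $a_1$ becomes a chord), while the two ends attach to $a_0$ and $a_1$ through triple points whose forbidden $c$-lines are among those omitted from the list; in the top case $i=2n+1$ this forces the path to run between $b_{w-1}$ and $b_{w}$, where $c_w$ is the unique omitted $c$-line. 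Realising the path amounts to finding a path with these prescribed endpoints in the auxiliary ``compatibility'' bipartite graph on the $b$- and $c$-indices with $b_s\sim c_t$ iff $t\notin\{s,s+1\}$; letting the indices advance by a fixed step produces it explicitly, giving odd $i$ from $5$ up to $2n+1$, the top case using all $n$ lines of $\mathcal{B}$ and all but one line of $\mathcal{C}$.

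The hard part will be exactly this extremal (and near-extremal) odd case: once almost every line is forced into the list there is essentially no slack, and one must check that the alternating path uses pairwise distinct triple points and creates no chord --- equivalently that the compatibility graph carries a near-Hamiltonian path with the two prescribed end attachments. I expect this to split on the parity of $n$, since the natural step-$2$ pattern runs through all residues only for odd $n$; the cleanest route is to record one explicit index sequence for odd $n$ and one for even $n$ and verify at each chosen point that the two concurrency conditions above hold. The genuinely delicate point is the bookkeeping of distinctness together with the two forbidden $c$-lines at the $a$-attachments, which is where the argument must be carried out with care.
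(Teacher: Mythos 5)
Your reduction of the problem to the three-pencil combinatorics is the right picture, and your even-length argument is correct and in fact cleaner than the paper's: since a chord of the would-be cycle in the Levi graph can only arise from a \emph{chosen} point being incident to a third cycle line, any $2m$-cycle of the bipartite incidence pattern between two of the pencils lifts to an induced $\BFC_{4m}$, because the third line through each chosen triple point lies in the unused pencil. That disposes of all even $i$ with $4\le i\le 2n$ at once, whereas the paper obtains its even lengths by surgery on odd-length cycles. For odd $i$ your plan (two lines through an $n$-fold point, closed up by an alternating path through the other two pencils) is essentially the same construction the paper uses, so there the approaches coincide.

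The gap is that the odd case, and in particular the extremal $i=2n+1$, is exactly the part you defer, and the deferral is not harmless. Your own compatibility graph shows the construction cannot work for $n=4$: with $a_0,a_1$ through the $n$-fold point and $c_w$ omitted, each $b_s$ other than $b_{w-1},b_w$ loses two of the only three available $c$-lines and so has degree $1$, hence cannot be an interior vertex of the required path; permuting the pencils or the choice of the two concurrent lines does not change this. Worse, the statement itself fails at $n=4$: classifying the pencil distributions $(\alpha,\beta,\gamma)$ of a putative $9$-line induced cycle, one finds that a pencil contributing all $4$ lines forbids every adjacency between the other two pencils and forces an even total; that $(3,3,3)$ forces each line to have one neighbour in each other pencil, whence the index recursion closes after only $6$ lines; and that $(4,3,2)$ forces the end-attachments of the $a$--$b$ path onto omitted $c$-lines. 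So no induced $\BFC_{18}$ exists for $n=4$ and no bookkeeping will rescue that case (the paper's own index ranges, e.g.\ $1\le i\le (n-6)/2$, silently become empty or negative there as well). For $n\ge 5$ your compatibility graph has minimum degree at least $2$ and the step-pattern paths do exist (e.g.\ for $n=5$ and $w=0$ the path $b_4c_2b_3c_1b_2c_4b_1c_3b_0$ works), but to count as a proof you must actually record one index sequence per parity of $n$ and verify the two incidence conditions at every chosen point --- precisely the content you have postponed.
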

\begin{proof}

Let us fix the following notations for the lines:
\begin{align*}
    L_{xy}^i: x-\ep^iy \text{ for } 0\leq i\leq n-1; \\
    L_{xz}^i: x-\ep^iz \text{ for } 0\leq i\leq n-1; \\
    L_{yz}^i: y-\ep^iz \text{ for } 0\leq i\leq n-1.
\end{align*}

    Let $\ell_1 p_1\ell_2 p_2\cdots \ell_{i-1}p_{i-1}\ell_i p_{i}\ell_1$ correspond to an induced cycle of length $2i$ for $4\leq i\leq 2n+1$. We now construct such lines to get induced cycle $\BFC_{2i}$. We divide into two cases:
    
    \vskip 2mm
    \noindent
    \textbf{Case-I:} Suppose $n$ is odd. We first show that $G$ contains an induced cycle of length $2m$, where $m$ is odd and $5\leq m\leq 2n+1$. Let us consider the lines $\mathcal{L}_{4i+3}:$ $\ell_1=L^1_{xy}$, $\ell_2=L^0_{xy}$, $\ell_3=L^0_{yz}$, $\ell_{4i}=L^{2i}_{xz}$, $\ell_{4i+1}=L^{2i+1}_{yz}$, $\ell_{4i+2}=L^{2i-1}_{xz}$, $\ell_{4i+3}=L^{2i}_{yz}$ for $1\leq i\leq (n-3)/2.$
    We can take either the lines $\mathcal{L}_{4i+1}$ with the end line $\ell_{4i+1}=L^{2i+1}_{yz}$ or the lines $\mathcal{L}_{4i+3}$ with the end line $\ell_{4i+3}=L^{2i}_{yz}$ for $1\leq i\leq (n-3)/2$ together with the corresponding intersection points to get a cycle $\BFC_{2m}$ of length $2m$, where $m$ is odd and $5\leq m\leq 2n-3$. 
    
    
    We make an induced cycle of length $2(2n-1)$ by considering the lines $\mathcal{L}_{2n-5}$ with the extra $4$ lines $\ell_{2n-4}=L^0_{xz}$, $\ell_{2n-3}=L^1_{yz}$, $\ell_{2n-2}=L^{n-4}_{xz}$ and $\ell_{2n-1}=L^{n-3}_{yz}$. We now make an induced cycle of maximum length $2(2n+1)$ by considering the lines $\mathcal{L}_{2n-3}$ with the extra lines $\ell_{2n-2}=L^{n-1}_{xz}$, $\ell_{2n-1}=L^1_{yz}$, $\ell_{2n}=L^{n-2}_{xz}$ and $\ell_{2n+1}=L^{n-1}_{yz}$.

    So, it remains to show that $G$ contains an induced cycle of length $2m$, where $m$ is even and $4\leq m \leq 2n$. Let $\BFC_{2m}: \ell_1 p_1\ell_2 p_2\cdots \ell_{m-1}p_{m-1}\ell_m p_{m}\ell_1$ correspond to an induced cycle of length $2m$, where $m$ is odd and $5\leq m\leq 2n+1$. We modify this cycle by considering $\ell_1' = \ell_1, \ell_2'= L^2_{xy}$ and $\ell_j' = \ell_{j+1}$ for $j \geq 3$. Then $\ell_1' p_1'\ell_2' p_2'\cdots \ell_{m-1}'p_{m-1}'\ell_m' p_{m}'\ell_1'$ corresponds to an induced cycle of length $2m$ for $m$ even and $4\leq m \leq 2n$. 
    We now show that the obtained cycle $\BFC_{2m}$ is an induced cycle.
    
    \noindent
    \textbf{Case-II:} Assume that $n$ is even. We first construct an induced cycle of length $2m$, where $m$ is odd and $5\leq m\leq (n-6)/2$. Consider the lines $\mathcal{L}_{4i+5}$: $\ell_1=L^1_{xy}$, $\ell_2=L^0_{xy}$, $\ell_3=L^0_{yz}$, $\ell_4=L^2_{xz}$, $\ell_5=L^3_{yz}$, $\ell_{4i+2}=L^{2i+3}_{xz}$, $\ell_{4i+3}=L^{2i+4}_{yz}$, $\ell_{4i+4}=L^{2i+2}_{xz}$, $\ell_{4i+5}=L^{2i+3}_{yz}$  for $1\leq i\leq (n-6)/2$. Then $\ell_1 p_1\ell_2 p_2\ell_3 p_3\ell_4 p_4 \ell_5 p_5\ell_1$ corresponds to an induced cycle of length $2\cdot 5$. For $m\geq 7$, we consider the lines $\mathcal{L}_{4i+3}$ with the end line $\ell_{4i+3}=L^{2i+4}_{yz}$ or $\mathcal{L}_{4i+5}$ with the end line $\ell_{4i+5}=L^{2i+3}_{yz}$ together with the corresponding intersection points to get an induced cycle $\BFC_{2m}$ of length $2m$ for $m$ odd and $7\leq m\leq 2n-7$. We make an induced cycle $\BFC_{2(2n-5)}$ by considering the lines $\mathcal{L}_{2n-9}$ with extra $4$ lines $\ell_{2n-8}=L^1_{xz}$, $\ell_{2n-7}=L^2_{yz}$, $\ell_{2n-6}=L^{2n-2}_{xz}$, $\ell_{2n-5}=L^{n-1}_{yz}$. Now we make an induced cycle of length $2(2n-3)$, $2(2n-1)$, $2(2n+1)$ from the induced cycle $\BFC_{2(2n-7)}$. Let $\ell_{2n-6}=L^{n-1}_{xz}$, $\ell_{2n-5}=L^1_{yz}$, $\ell_{2n-4}=L^3_{xz}$, $\ell_{2n-3}=L^4_{yz}$, $\ell_{2n-2}=L^1_{xz}$, $\ell_{2n-1}=L^2_{yz}$, $\ell_{2n}=L^{n-2}_{xz}$, $\ell_{2n+1}=L^{n-1}_{yz}$. Therefore,
    \begin{itemize}
        \item the lines of $\mathcal{L}_{2n-7}$ with extra lines 
 $\ell_{2n-6}$, $\ell_{2n-5}$, $\ell_{2n-4}$, $\ell_{2n-3}$, $\ell_{2n-2}$, $\ell_{2n-1}$, $\ell_{2n}$, $\ell_{2n+1}$ correspond to an induced cycle of length $2(2n+1)$.
 \item the lines of $\mathcal{L}_{2n-7}$ with extra lines 
 $\ell_{2n-6}$, $\ell_{2n-5}$, $\ell_{2n-4}$, $\ell_{2n-3}$, $\ell_{2n}$, $\ell_{2n+1}$ correspond to an induced cycle of length $2(2n-1)$.
 \item the lines of $\mathcal{L}_{2n-7}$ with extra lines 
 $\ell_{2n-6}$, $\ell_{2n-5}$, $\ell_{2n}$, $\ell_{2n+1}$ correspond to an induced cycle of length $2(2n-3)$.
    \end{itemize}

    Similar to the Case-I, if $\BFC_{2m}: \ell_1 p_1\ell_2 p_2\cdots \ell_{m-1}p_{m-1}\ell_m p_{m}\ell_1$ corresponds to an induced cycle of length $2m$ for $m$ being odd, then by modifying the cycle with the consideration of $\ell_1' = \ell_1, \ell_2'= L^2_{xy}$ and $\ell_j' = \ell_{j+1}$ for $j \geq 3$, we get an induced cycle of length $2m$, where $m$ is even and $4\leq m \leq 2n$.

    

\end{proof}

 \vspace{4mm}
 \textbf{Supersolvable line arrangements:}
A line arrangement $\mathcal{L}$ is supersolvable if $\sing(\mathcal{L})$ contains a modular point. We use the nations $\mu_{\mathcal{L}}$ and $m_{\mathcal{L}}$ for the number of modular points in $\sing(\mathcal{L})$ and for maximum multiplicity of modular points respectively. A supersolvable line arrangement is called non-homogenous if all the modular points don't have same multiplicity and is called $m$- homogeneous if all the modular points have the common multiplicity $m = m_{\mathcal{L}}$. 

We start with a simple yet important observation.

\vskip 2mm 

\begin{theorem}
 Let $\mathcal{L}$ be a supersolvable line arrangement of $k$ lines in $\mathbb{P}_\mathbb{C}^2$. Then the associated Levi graph $G$ does not have an induced cycle of maximum length i.e. $G$ does not contain an induced cycle of length $2k$.
\end{theorem}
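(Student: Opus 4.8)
The plan is to argue by contradiction, so suppose the Levi graph $G$ contains an induced cycle $C$ of length $2k$. Since $C$ is an even cycle in the bipartite graph $G$, whose parts have sizes $|\sing(\mathcal{L})|$ and $k$, it uses exactly $k$ vertices from each side; as there are only $k$ line-vertices, \emph{every} line of $\mathcal{L}$ occurs in $C$. The first observation I would record is that every point-vertex of $C$ must be a double point: if a cycle-point $q$ had multiplicity $m_q\geq 3$, then all $m_q$ lines through $q$ lie in $C$, so in the induced subgraph $q$ would be adjacent to at least three cycle-lines, contradicting $\deg_C(q)=2$. Hence $C$ consists of all $k$ lines together with $k$ double points, and each line carries exactly its two cyclic neighbours. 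In particular, a necessary condition is that \emph{every line of $\mathcal{L}$ has at least two double points on it}, and the whole problem reduces to excluding this.

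Next I would feed in a modular point. Let $p$ be a modular point of maximal multiplicity $m=m_{\mathcal{L}}$; the degenerate case $m=2$ (which forces every point of $\sing(\mathcal{L})$ onto the two lines through $p$) I would dispose of separately, so assume $m\geq 3$. Then $p$ is not a double point, so $p\notin C$, and the $m$ lines through $p$ (the ``red'' lines) are pairwise non-adjacent in the cyclic line-order of $C$, since any two of them meet only at $p$. Modularity now enters twice more: every double point $q\neq p$ lies on a red line, because the line $pq\in\mathcal{L}$ passes through $q$; and the intersection of any two non-red (``blue'') lines lies on a red line, hence has multiplicity $\geq 3$ and is not a double point. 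The latter shows the blue lines are also pairwise non-adjacent in $C$. Thus both colour classes are independent sets in the line-cycle, each of size $\leq\lfloor k/2\rfloor$; as they exhaust all $k$ lines, we must have $k$ even, $m=k/2$, and strict alternation. (When $m>\lfloor k/2\rfloor$, e.g. for a near-pencil or the two–modular-point arrangements of Example \ref{exam: not having cycle of length 10}, this step already gives the contradiction outright.)

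The remaining, extremal, case is strict alternation $R_1B_1R_2\cdots R_mB_m$ with $B_i$ between $R_i$ and $R_{i+1}$. Here the two cycle double-points on $B_i$ are $B_i\cap R_i$ and $B_i\cap R_{i+1}$, so the $m-1$ intersections of $B_i$ with the other blue lines — which by modularity all sit on red lines — must avoid $R_i$ and $R_{i+1}$, i.e. they land on the remaining $m-2$ red lines. For $m=3,4$ this is immediately impossible: pigeonholing $B_i\cap B_j$ onto the available red lines, the red line carrying it is unavoidably a cyclic neighbour of $B_i$ or of $B_j$, and the extra blue line through that point destroys one of the prescribed cycle double-points. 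Equivalently, one checks directly (as in the braid arrangement $A_3$, where every line has only one double point) that no line can supply its two double neighbours.

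The hard part will be the alternating case for large $m$. Tracing the coincidences, the incidence pattern of the blue lines is forced to be a \emph{resolution} of the $\binom{m}{2}$ blue pairs into the $m$ pencil-lines through $p$ (each pair on exactly one red), and the cycle demands that every red and every blue still retain two simple double points. The naive count $\binom{m}{2}\leq m\binom{m-2}{2}$ only settles $m\leq 4$, and a sharper edge-disjoint packing argument disposes of further values (for instance $m=5$, where the ten pairs cannot be packed into five classes each with two singletons). The genuine obstruction, which I expect to be the crux, is that some residual patterns — most notably the Fano incidence when $m=7$ — satisfy \emph{all} of these combinatorial constraints and are excluded only because they are not realizable by lines in $\mathbb{P}_{\mathbb{C}}^2$. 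I would therefore finish by invoking the non-realizability over $\mathbb{C}$ of the designs that survive the counting (equivalently, a de Bruijn–Erdős– or Hirzebruch-type restriction on $t_2$ imposed by the many forced triple points), which shows that in fact some line of $\mathcal{L}$ always carries at most one double point, contradicting the necessary condition of the first paragraph.
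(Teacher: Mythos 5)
Your structural reduction is correct and is in fact more careful than the printed argument. Once you know that an induced $\mathbf{C}_{2k}$ must use every line and that each of its point-vertices is a double point, modularity of $p$ does force the $m_p$ lines through $p$ to be pairwise non-adjacent in the cyclic line-order, and (since every intersection of two lines missing $p$ sits on a line of $\mathcal{L}$ through $p$, hence is at least a triple point) the complementary lines to be pairwise non-adjacent as well; so either $m_p\neq k/2$ and you are done, or $k=2m_p$ with strict alternation. The paper's proof takes a shortcut at exactly this juncture: it argues that the line joining $p$ to the cycle point $p_{12}$ must be $\ell_1$ or $\ell_2$ and then asserts $p=p_{12}$, from which the contradiction is immediate; the alternating configuration is precisely the situation in which that assertion fails (one of $\ell_1,\ell_2$ passes through $p$ without $p$ being $p_{12}$), so your case split is identifying a real difficulty rather than manufacturing one. (Two side remarks: the generic triangle $k=3$ is supersolvable with all modular points of multiplicity $2$ and its Levi graph \emph{is} an induced $\mathbf{C}_6$, so the $m=2$ case you propose to ``dispose of separately'' cannot be disposed of without excluding it; and your $m=3,4$ pigeonhole is fine provided you apply it to an antipodal blue pair.)

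The genuine gap is the alternating case for $m\geq 5$, which you explicitly leave open. The counting constraints you mention cannot close it: in the alternating configuration the identities $\binom{k}{2}=\sum_q\binom{m_q}{2}$ and $k-1=\sum_{q\in\ell_i}(m_q-1)$ are satisfied \emph{identically} for every decomposition of the edge set of $K_m$ into $m$ classes, where class $l$ is an equivalence relation on $[m]\setminus\{l-1,l\}$ and the edge $\{i,j\}$ is forbidden from classes $i,i+1,j,j+1$; so no numerical count will finish the job, and the forced ``diagonal'' assignments that kill $m=5$ do not obviously propagate. Announcing that you would ``invoke the non-realizability over $\mathbb{C}$ of the designs that survive'' is a plan, not a proof: you would need to classify the surviving decompositions and exhibit a concrete obstruction (Hirzebruch-type inequalities are delicate here because $t_{k/2}\neq 0$), or else fall back on the Hanumanthu--Harbourne and Abe--Dimca classifications of supersolvable arrangements --- which would invert the logical order of the paper, where this theorem is presented as a direct observation prior to and independent of that classification. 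As written, the proposal does not constitute a complete proof of the statement.
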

\begin{proof}
First we assume that there exists an induced cycle of length $2k$. Let $\ell_1p_{12}\ell_2 \dots \ell_kp_{k1}\ell_1$ corresponds to an induced cycle of length $2k$ in $G$ where $p_{ij} \in \sing(\mathcal{L}) \cap L_i \cap L_j$.

Now let $p \in \sing(\mathcal{L})$ be a modular point. choose any element from the set $\{p_{i(i+1)}, \, \, 1 \leq i \leq k-1, p_{k1} \}$, say $p_{12}$. Since $p$ is a modular point, the line joining $p$ and $p_{12}$ is $\ell_v$ for some $v \in \{1, \dots, k\}$. If $v=1$ or $v=2$, $p=p_{12}$. Now, choose any point $p_{j(j+1)}$ for some $j \in \{2, \dots, k\}$. Since, $p_{12}$ is a modular point, the line joining $p_{12}$ and $p_{j(j+1)}$ is $\ell_{v_1}$ for some $v_1 \in \{1, \dots, k\}$. But, $p_{12} \in \sing(\mathcal{L}) \cap \ell_{v_1}$ implies that $v_1 = 1$ or $v_1=2$. Hence, $p_{j(j+1)}$ lies on either $\ell_1$ or $\ell_2$ which contradicts the fact that $\ell_1p_{12}\ell_2 \dots \ell_kp_{k1}\ell_1$ corresponds to an induced cycle of length $2k$.

If $v\neq 1,2$, $p_{12} \in \sing(\mathcal{L}) \cap \ell_v$, a contradiction to the assumption. Hence, $G$ does not have an induced cycle of length $2k$.
\end{proof}

Since the associated Levi graph of a supersolvable line arrangement does not have an induced cycle of maximum length, it is natural to ask the following question.

\textbf{Question:} what is the maximum length of an induced cycle in Levi graph associated to a supersolvable line arrangement?

 Based on the classification of supersolvable line arrangements in \cite{HH21} and \cite{AD20}, We study induced cycles of Levi graphs associated to supersolvable line arrangements. 

In \cite{HH21}, the authors gave the following description of supersolvable line arrangements (over any field) of $k$ lines in $\BBP_\BBC^2$. 
\begin{theorem}\cite[Theorem 1]{HH21}\label{thm: classification supersolvable}
    Let $\mathcal{L}$ be a line arrangement (over any field) with the number of modular points $\mu_{\mathcal{L}}>0$.
    \begin{enumerate}
        \item If $\mathcal{L}$ is not homogeneous, then either $\mathcal{L}$ is a near pencil or $\mu_{\mathcal{L}}=2$; if $\mu_{\mathcal{L}}=2$,
then $\mathcal{L}$ consists of $a\geq 2$ lines through one modular point, $b(>a)$ lines through the
other modular point, and we have $s_{\mathcal{L}}=a+b-1$ and $t_2=(a-1)(b-1)$.
\item If $\mathcal{L}$ has a modular point of multiplicity $2$, then $\mathcal{L}$ is trivial.
\item If $\mathcal{L}$ is complex and homogeneous with $m=m_{\mathcal{L}}>2$, then $1\leq \mu_{\mathcal{L}} \leq 4$. If $3\leq \mu_{\mathcal{L}} \leq 4$, we have the following possibilities. If $\mu_{\mathcal{L}}=4$, then $s_{\mathcal{L}}=6$, $m=3$, $t_2=3$, $t_3=4$ and $t_k=0$ otherwise; up to the change of coordinates, $\mathcal{L}$ consists of the lines $x=0$, $y=0$, $z=0$, $x-y=0$, $x-z=0$ and $y-z=0$ (intuitively, an equilateral triangle and its angle bisectors). And if $\mu_{\mathcal{L}}=3$, then $m>3$, and up to the change of coordinates, $\mathcal{L}$ consists of the lines defined by the linear factors
of $xyz(x^{m-2}-y^{m-2})(x^{m-2}-z^{m-2})(y^{m-2}-z^{m-2})$; hence, $s_{\mathcal{L}}=3(m-1)$, $t_2=3(m-2)$, $t_3=(m-2)^2$, $t_m=3$ and $t_k=0$ otherwise.

    \end{enumerate}
\end{theorem}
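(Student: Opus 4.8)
The plan is to read off the rigid geometry that a modular point imposes, feed the resulting incidences into the two counting identities recalled in the preliminaries, and call on a Hirzebruch-type inequality to handle the complex homogeneous case. The basic structural fact I would isolate first is this: if $p$ is modular of multiplicity $m_p$, then every singular point lies on one of the $m_p$ lines through $p$, so any two lines of $\mathcal{L}$ not passing through $p$ meet at a point $p'$ for which $\overline{pp'}$ is again one of those $m_p$ lines; in particular, for two distinct modular points $p,q$ the join $\overline{pq}$ is itself a line of $\mathcal{L}$. With this lemma the whole argument is organized by the number and multiplicities of the modular points, using $\binom{k}{2}=\sum_{p\in\sing(\mathcal{L})}\binom{m_p}{2}$ and \eqref{comb count} as the bookkeeping tools.

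For part (1) I would take two modular points $p,q$ of distinct multiplicities $a=m_p<b=m_q$. Suppose some line $\ell$ passed through neither. Then each crossing of $\ell$ with the pencil at $p$ would, by modularity of $q$, also lie on a line through $q$, and conversely; since two lines of a single pencil meet only at its center, each such crossing lies on exactly one $p$-line and exactly one $q$-line, forcing the two pencils to cut $\ell$ in the very same set of points and hence $a=b$ --- contradicting non-homogeneity. So every line passes through $p$ or $q$, which gives $k=a+b-1$, and the $(a-1)(b-1)$ crossings of the non-shared lines through $p$ with those through $q$ are precisely the double points, so $t_2=(a-1)(b-1)$. When $a=2$ this is a near-pencil; when $a\ge 3$ a short argument shows that two of the double points are not collinear along any arrangement line, so no double point is modular and $\mu_{\mathcal{L}}=2$. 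Part (2) is the degenerate end of the same analysis: a modular point of multiplicity $2$ lies on only two lines, every other crossing must fall on those two lines, and the incidence count collapses $\mathcal{L}$ to a pencil or near-pencil, i.e.\ a trivial arrangement.

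The heart of the argument is part (3), where $\mathcal{L}$ is complex and homogeneous with common modular multiplicity $m>2$. Assuming $\mu_{\mathcal{L}}\ge 2$, the modular points are pairwise joined by arrangement lines, so their mutual position is either collinear or a triangle and each carries $m$ lines; translating these incidences into the multiplicity vector $(t_2,t_3,t_m)$ and substituting into a Hirzebruch-type inequality of the form $t_2+\tfrac{3}{4}t_3\ge k+\sum_{r\ge 5}(r-4)t_r$ --- valid over $\mathbb{C}$ precisely because $\mathcal{L}$ is neither a pencil nor a near-pencil --- yields the bound $\mu_{\mathcal{L}}\le 4$ together with the numerical data at the two extremes. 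It then remains to realize the extremal configurations: three or four mutually modular points of multiplicity $m$ force, up to a projective change of coordinates, the six lines $x,y,z,x-y,x-z,y-z$ when $\mu_{\mathcal{L}}=4$ (whence $m=3$), and the linear factors of $xyz(x^{m-2}-y^{m-2})(x^{m-2}-z^{m-2})(y^{m-2}-z^{m-2})$ --- the extended Fermat (Ceva-type) arrangement --- when $\mu_{\mathcal{L}}=3$; once the coordinates are fixed the asserted counts $t_2,t_3,t_m$ are a direct verification.

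The step I expect to be the main obstacle is the bound $\mu_{\mathcal{L}}\le 4$ in part (3). This is exactly where the real and complex classifications part ways, and no purely combinatorial incidence count rules out five or more mutually modular points of equal multiplicity; the only sufficiently strong input is the complex Hirzebruch (equivalently Bogomolov--Miyaoka--Yau) inequality. The delicate work is to extract, from the bare hypothesis that several equal-multiplicity points are pairwise joined by arrangement lines, the precise low-multiplicity data $t_2$ and $t_3$ that the inequality constrains, and then to check that each of the two extremal numerical profiles is realized by a unique arrangement up to coordinates. By contrast, parts (1) and (2) reduce to elementary incidence counting once the modularity structure lemma is in place.
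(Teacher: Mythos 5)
This statement is not proved in the paper at all: it is quoted verbatim from \cite[Theorem 1]{HH21} and used as an external input, so there is no in-paper argument to compare your proposal against. Judged on its own, your outline does track the strategy of the cited source reasonably well: the structural lemma (every singular point lies on an arrangement line through a modular point, hence any two modular points are joined by a line of $\mathcal{L}$), the line-counting argument on a hypothetical line missing both modular points to get $k=a+b-1$ and $t_2=(a-1)(b-1)$ in part (1), and the appeal to the complex Hirzebruch inequality $t_2+\tfrac34 t_3\ge k+\sum_{r\ge 5}(r-4)t_r$ to separate the complex homogeneous case in part (3) are all the right ingredients, and your argument that a line through neither modular point would force $a=b$ is correct as stated.

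That said, what you have written is a plan rather than a proof, and the two places you yourself flag as the obstacles are exactly the places where nothing is actually done. You never translate ``$\mu$ pairwise-joined modular points of common multiplicity $m$'' into explicit lower bounds on $k$ and upper bounds on $t_2+\tfrac34 t_3$, so the claimed conclusion $\mu_{\mathcal{L}}\le 4$ is asserted, not derived; similarly, the identification of the $\mu_{\mathcal{L}}=3$ arrangement with the linear factors of $xyz(x^{m-2}-y^{m-2})(x^{m-2}-z^{m-2})(y^{m-2}-z^{m-2})$ up to coordinates, and of the $\mu_{\mathcal{L}}=4$ case with the six lines of the complete quadrilateral, requires a genuine rigidity argument (normalizing the three modular points to the coordinate vertices and showing the cross-ratios of the $m-2$ remaining lines in each pencil are forced to be the $(m-2)$-th roots of unity), which is only gestured at. Part (2) is likewise left as ``the incidence count collapses $\mathcal{L}$''. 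Since the result is imported from \cite{HH21} precisely because its proof is nontrivial, the appropriate disposition here is to keep the citation; if you wanted a self-contained proof, the missing content is concentrated in the Hirzebruch computation and the coordinate normalization, neither of which your proposal supplies.
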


\vskip 2mm

If $\mathcal{L}$ is not homogeneous. the cases are illustrated in \ref{exam: near pencil} and \ref{exam: not having cycle of length 10}. Now assume that $\mathcal{L}$ is complex and homogeneous and $\mu_{\mathcal{L}} =4$.

\begin{theorem}
    Let $\mathcal{L}$ be a supersolvable line arrangement of $s_{\mathcal{L}}$ lines with $\mu_{\mathcal{L}} =4$. Then length of the longest induced cycle in the associated Levi graph is $8$.
\end{theorem}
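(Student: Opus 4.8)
The plan is to first pin down the arrangement completely and then split the argument into a lower bound and an upper bound for the longest induced cycle. By part (3) of Theorem \ref{thm: classification supersolvable}, the hypothesis $\mu_{\mathcal{L}}=4$ forces (up to a change of coordinates) $\mathcal{L}$ to be the six lines $x=0,\ y=0,\ z=0,\ x-y=0,\ x-z=0,\ y-z=0$, with $t_2=3$, $t_3=4$ and $t_r=0$ otherwise; in particular $k=6$ and the largest multiplicity is $q=3$. I would first record the four triple points explicitly, namely the four line-triples $\{x{=}0,\,y{=}0,\,x{-}y{=}0\}$, $\{x{=}0,\,z{=}0,\,x{-}z{=}0\}$, $\{y{=}0,\,z{=}0,\,y{-}z{=}0\}$ and $\{x{-}y{=}0,\,x{-}z{=}0,\,y{-}z{=}0\}$ meeting at $(0{:}0{:}1)$, $(0{:}1{:}0)$, $(1{:}0{:}0)$ and $(1{:}1{:}1)$ respectively, together with the three double points. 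The combinatorial feature I will exploit is that each line lies on exactly two of these triples, so the four triples form a symmetric $(6_2,4_3)$ incidence configuration.

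For the upper bound, since $\mathcal{L}$ has only $k=6$ lines every induced cycle has length at most $2k=12$. The length $12$ is ruled out immediately by the earlier theorem of this section, which shows that the Levi graph of a supersolvable arrangement never contains an induced cycle of the maximal length $2k$. The main point is therefore to exclude an induced $\BFC_{10}$. Such a cycle would use five of the six lines, and I would check, by inspecting the six ways of deleting one line, that every five-element subset of $\mathcal{L}$ contains one of the four triples, i.e. three lines through a common triple point $P$. Arranging these five lines in the cyclic order prescribed by the putative $\BFC_{10}$, I would then use that three vertices of the cycle $\mathbf{C}_5$ can never form an independent set, so two of the three lines through $P$ must be consecutive in the cycle; their connecting point-vertex is then forced to equal $P$, and the third line through $P$ contributes a further edge at $P$, giving $P$ degree at least three in the induced subgraph. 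This contradicts the subgraph being an induced cycle, so no $\BFC_{10}$ exists.

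For the lower bound, since the largest multiplicity is $3$ we have $t_k=t_{k-1}=0$, so Theorem \ref{thm: cycle of length 8} already guarantees an induced $\BFC_8$; alternatively one exhibits one directly, for instance the cycle on the lines $x{=}0$, $y{-}z{=}0$, $z{=}0$, $x{-}y{=}0$ and the four points $(0{:}1{:}1)$, $(1{:}0{:}0)$, $(1{:}1{:}0)$, $(0{:}0{:}1)$, whose incidences are routine to verify. Combining the two bounds yields that the longest induced cycle has length exactly $8$. I expect the only genuine obstacle to be the $\BFC_{10}$ step: it is \emph{not} covered by Theorem \ref{thm: cycles of length 10}, whose hypothesis $k\ge 10$ fails here, so it must be argued by hand, and the argument rests on the two elementary observations above, that every five of the six lines contain a collinear triple and that three positions in $\mathbf{C}_5$ cannot be pairwise non-adjacent.
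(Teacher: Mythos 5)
Your proposal is correct, and each of its three ingredients checks out: the classification pins the arrangement down to the six lines of the complete quadrilateral with four triple points forming a $(6_2,4_3)$ configuration; the induced $\BFC_8$ you exhibit (equivalently, the appeal to Theorem \ref{thm: cycle of length 8}, whose hypothesis $t_6=t_5=0$ holds here) gives the lower bound; and $\BFC_{12}$ is excluded by the general no-$2k$-cycle result for supersolvable arrangements. Where you genuinely diverge from the paper is the exclusion of $\BFC_{10}$. The paper argues directly on the five point-vertices of a hypothetical $\BFC_{10}$: it shows by a short coordinate case analysis (tracing forced choices of consecutive lines) that at most one of them can be a double point, and then notes that four triple points among the five cycle points is impossible since there are only four triple points in total and they are pairwise joined by lines of the arrangement. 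Your route is cleaner and coordinate-free: since each line lies on exactly two of the four concurrent triples, every $5$-subset of lines contains a full triple through some point $P$; since $\alpha(\mathbf{C}_5)=2$, two of those three lines are consecutive in the cycle, forcing their connecting point-vertex to be $P$, and the third line through $P$ then gives $P$ degree $\ge 3$ in the induced subgraph. This is exactly the mechanism the paper itself uses in Example \ref{exam: not having cycle of length 10} for two modular points, so your proof unifies the two arguments; it also generalizes verbatim to any arrangement in which every $i$-subset of lines contains a concurrent triple, whereas the paper's case analysis is tied to the specific coordinates. You are also right that Theorem \ref{thm: cycles of length 10} is unavailable here because $k\ge 10$ fails, so the hand argument is unavoidable.
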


\begin{proof}
 From theorem \ref{thm: classification supersolvable} we know that $s_{\mathcal{L}} = 6$ and the lines are given by:

 \begin{align*}
     L_x : x=0, L_y: y=0, L_z: z=0, \\
     L_{xy} = x-y =0, L_{xz}: x-z =0, L_{yz}: y-z =0.
 \end{align*}
The intersection points are $p_1 = (1,0,0), p_2=(0,1,0), p_3 =(0,0,1), p_4 =(1,1,0), p_5 =(1,0,1), p_6 = (0,1,1), p_7 =(1,1,1)$. All the triple intersection points i.e $p_1,p_2, p_3$ and $p_7$ are modular points. 

Note that, $L_{xy}p_4L_zp_1L_{yz}p_6L_xp_3L_{xy}$ corresponds to an induced cycle of length $8$ in the associated Levi graph.

Now assume that $\ell_1p_{12}\ell_2 \dots p_{51}\ell_1$ corresponds to an induced cycle pf length $10$. Let $S= \{p_{ii+1}, \, i =1, \dots, 4, p_{51} \}$. We claim that $S$ has at most one double points. Suppose that $S$ has two or three double points. Wlog assume that $p_{12} =p_4$ is a double point. Then $\ell_1 = L_{xy}$ and $\ell_2= L_z$. Choices for $\ell_3$ are $L_{xz}$ and $l_{yz}$. If $\ell_3 = L_{xz}$ then we choose $\ell_4 = L_y$ so that $p_{34}= p_5$ is a double point. Since $p_{45}$ has to be a triple point, choices are $(1,0,0)$ and $(0,0,1)$ But $L_z$ passe through $(1,0,0)$ and $L_{xy}$ passes through $(0,0,1)$. So, $S$ has at most on double points. But in that case it is not possible to obtain an induced $\BFC_{10}$ as $s_{\mathcal{L}}=6$ and  we need at least $4$ triple points. Hence, length of a longest induced cycle is $8$.
\end{proof}

We now discuss the cases when $\mu_{\CLL}=2,3$.

\vskip 2mm

$\mathbf{\mu_{\CLL}=3}$:

\begin{theorem}
    Let $\CLL$ be a supersolvable line arrangement of $s_{\CLL}$ lines in $\BBP_\BBC^2$ with $\mu_{\CLL}=3$. Then the associated Levi graph $G$ contain induced cycles $\BFC_{2i}$ for $$ 4\leq i \leq 2m-2$$.
\end{theorem}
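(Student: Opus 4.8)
The plan is to exploit the explicit classification in Theorem \ref{thm: classification supersolvable}: when $\mu_{\CLL}=3$ and $m=m_{\CLL}>3$, up to a change of coordinates $\CLL$ consists of the three coordinate lines $x=0$, $y=0$, $z=0$ together with the $3(m-2)$ lines cut out by $(x^{m-2}-y^{m-2})(x^{m-2}-z^{m-2})(y^{m-2}-z^{m-2})$. Writing $n=m-2$, these latter $3n$ lines are precisely Ceva's arrangement $\mathcal{L}_{\mathcal{C}}$, so $\CLL=\mathcal{L}_{\mathcal{C}}\cup\{x=0,\,y=0,\,z=0\}$. First I would record the incidence structure obtained by adjoining the coordinate lines: the three $n$-fold points $(1,0,0),(0,1,0),(0,0,1)$ of $\mathcal{L}_{\mathcal{C}}$ absorb the two coordinate lines through each of them and become the three modular $m$-fold points, while each coordinate line meets the ``opposite'' family of Ceva lines in $n$ genuine \emph{double} points (for instance $x=0$ meets $y-\ep^i z$ at $(0,\ep^i,1)$) and meets everything else only at modular points. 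In particular, no coordinate line passes through any triple point of $\mathcal{L}_{\mathcal{C}}$.

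The structural point that does the work is that the Levi graph $G_{\mathcal{C}}$ of $\mathcal{L}_{\mathcal{C}}$ is an \emph{induced} subgraph of $G$: the incidences among the Ceva lines and the Ceva intersection points are unchanged by adjoining the coordinate lines, which only create new vertices together with edges incident to them. Consequently every induced cycle of $G_{\mathcal{C}}$ persists as an induced cycle of $G$, and the preceding theorem on Ceva's arrangements already yields induced cycles $\BFC_{2i}$ in $G$ for all $4\le i\le 2n+1=2m-3$. Thus the whole range $4\le i\le 2m-3$ comes for free, and it remains only to produce a single induced cycle of length $2(2m-2)=2(2n+2)$, i.e. one line longer than the longest Ceva cycle.

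For the top case $i=2m-2$, I would begin from the maximal induced Ceva cycle of length $2(2n+1)$ constructed in the Ceva proof and \emph{insert one coordinate line} into it. That cycle begins with the two $xy$-lines $L^{1}_{xy},L^{0}_{xy}$ joined at the modular point $(0,0,1)$; I would replace this junction by the path $L^{1}_{xy}\,(\ep,1,0)\,(z{=}0)\,(1,1,0)\,L^{0}_{xy}$, where $(\ep,1,0)=L^1_{xy}\cap\{z=0\}$ and $(1,1,0)=L^0_{xy}\cap\{z=0\}$ are the two double points identified above. Replacing one point-vertex by a point--line--point path raises the length from $2(2n+1)$ to $2(2n+2)=2(2m-2)$ while adjoining the single extra line $z=0$. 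Both parities of $n$ start from the same pair $L^1_{xy},L^0_{xy}$ in the Ceva construction, so the identical insertion applies to the $n$ even and $n$ odd forms of the maximal Ceva cycle.

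The step needing real verification — and the main obstacle — is confirming that the enlarged cycle is still \emph{induced}. Two things must be checked: that $(\ep,1,0)$ and $(1,1,0)$ are true double points carried by no other cycle line (immediate, since $z=0$ avoids all triple points and all $xz$- and $yz$-lines), and, crucially, that $z=0$ passes through no other vertex of the cycle. The only Ceva points lying on $z=0$ are the two \emph{other} modular points $(1,0,0)$ and $(0,1,0)$, so the insertion is legitimate exactly because the maximal Ceva cycle never uses them: it is built from an alternating $xz$/$yz$ pattern whose consecutive intersections are triple points, the single same-family adjacency being the initial pair producing $(0,0,1)$. I would isolate this last fact as the one lemma to check carefully; once it is in hand, chord-freeness of the new cycle is automatic and the argument closes. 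For the small exceptional values $m\in\{4,5\}$, where $n=m-2<4$ and the Ceva theorem is not directly available, I would instead verify the short range $4\le i\le 2m-2$ by the same type of explicit construction.
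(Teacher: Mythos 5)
Your route is genuinely different from the paper's and, modulo the two points you yourself flag, it is sound. The paper does not pass through the Ceva theorem at all: it rebuilds everything from scratch, explicitly listing line sequences such as $\ell_1=L_z,\ell_2=L_y,\ell_3=L^0_{xz},\ell_4=L^0_{xy},\dots$ to get cycles on $2i+4$ lines for $0\leq i\leq m-3$ (the even line-counts $4,6,\dots,2m-2$) and separate, case-split sequences for the odd line-counts $2i+3$, making heavy use of the coordinate lines throughout. Your observation that $\CLL$ is exactly Ceva's arrangement on $n=m-2$ together with $xyz$, and that the Levi graph $G_{\mathcal C}$ sits inside $G$ as an \emph{induced} subgraph (the coordinate lines miss every triple point $(\ep^i,\ep^j,1)$ and only enlarge the three $n$-fold points), is correct and immediately imports the whole range $4\leq i\leq 2m-3$ from the preceding theorem; your single-line insertion of $z=0$ across the unique same-family junction $(0,0,1)$ of the maximal Ceva cycle, via the genuine double points $(\ep,1,0)$ and $(1,1,0)$, is also correct, and the ``lemma to check'' holds for the reason you give: every other point-vertex of that cycle has nonzero last coordinate, and the cycle never visits $(1,0,0)$ or $(0,1,0)$ because its only same-family adjacency is the initial $xy$-pair. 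What your approach buys is modularity and a much shorter argument; what it costs is (a) dependence on the Ceva theorem's hypothesis $n\geq 4$, so $m\in\{4,5\}$ must be handled by hand as you note (the paper's direct construction covers all $m>3$ uniformly), and (b) the need to actually carry out the chord-freeness verification you defer, which in turn inherits whatever care is needed in the Ceva construction itself. Neither is a conceptual obstacle, but both must be written out before this counts as a complete proof.
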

\begin{proof}
    Let $\CLL$ be a supersolvable line arrangement of $s_{\CLL}$ lines in $\BBP_\BBC^2$ with $\mu_{\CLL}=3$. Then $m=m_{\CLL}>3$ and
up to change of coordinates, $\CLL$ consists of the lines defined by the linear factors of 
$$xyz(x^{m-2}-y^{m-2})(x^{m-2}-z^{m-2}) (y^{m-2}-z^{m-2})$$
such that $s_{\CLL}=3(m-1)=s$ (say), $t_2=3(m-2)$, $t_3=(m-2)^2$, $t_m=3$ and $t_k=0$ otherwise. Let $\ep$ be the $(m-2)$-th root of unity, that is, $\ep^{m-2}=1$. We fix the following notations for the lines:
\begin{equation}\label{notations lines supersolvable}
\begin{split}
    L_{xy}^i: x-\ep^iy \text{ for } 0\leq i\leq m-3; \\
     L_{xz}^i: x-\ep^iz \text{ for } 0\leq i\leq m-3; \\
    L_{yz}^i: y-\ep^iz \text{ for } 0\leq i\leq m-3; \\
    L_x: x=0, L_y: y=0 \text{ and } L_z: z=0. \\
    \end{split}
\end{equation}
Then double intersection points can be written in the form $ \{(\epsilon^i, 1, 0), (0, \epsilon^i,1), (1,0, \epsilon^i): i=0, \dots , m-3\}$, triple intersection points can be written in the form $\{(\epsilon^i, \epsilon^j, 1): i,j  \in \{0, \dots, m-3\} \}$ and three $m$-fold points are $(1,0,0), (0,1,0), (0,0,1)$.

Let $G$ be the associated Levi graph of $\CLL$. First, we show that for $0\leq i\leq m-3$, $G$ contains an induced cycle of length $2(2i+4)$.
Let us consider the lines for $0\leq i\leq m-3$: $\ell_1=L_z,\ell_2=L_y,\ell_3=L^0_{xz},\ell_4=L^0_{xy},\ell_5=L^1_{xz},\ell_6=L^1_{xy},\cdots, \ell_{2i+3}=L_{xz}^{i},\ell_{2i+4}=L_{xy}^{i}$ and intersection points accordingly to make an induced cycle $\BFC_{2(2i+4)}$ of length $2(2i+4)$ (See Figure \ref{fig: levi graph for supersolvable line arrangements}). The intersection points of $x-\epsilon^jz=0,x-\epsilon^jy=0$ and $x-\epsilon^jy=0,x-\epsilon^{j+1}z=0$ are $(\epsilon^j,1,1)$ and $(\epsilon^{j+1},\epsilon,1)$ respectively for $0\leq j\leq i$. Note that the set of lines, $\{\ell_1,\dots,\ell_{2i+4}\}\subseteq V(\BFC_{2(2i+4)}).$ Clearly, except the lines $x-\epsilon^jz=0,x-\epsilon^jy=0$, the point $(\epsilon^j,1,1)$ does not lie on any other line of the cycle $\BFC_{2(2i+4)}$ i.e., $(\epsilon^j,1,1)\notin \{\ell_1,\dots,\ell_{2i+4}\}\setminus \{L^j_{xy},L^j_{xz}\}$. The same holds for the point $(\epsilon^{j+1},\epsilon,1)$ too. Therefore, $\BFC_{2(2i+4)}$ is an induced cycle of length $2(2i+4)$ for $0\leq i\leq m-3$.
			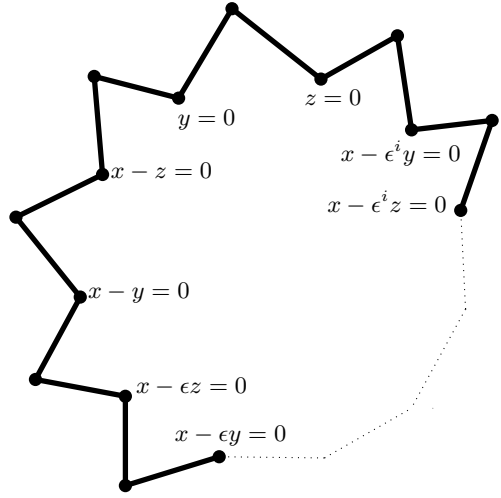
\begin{figure}[H]
				\begin{tikzpicture}[scale=1]
    \draw [line width=2pt] (-6.034651315755617,1.8029131271093046)-- (-4.883286264880086,2.3707799814009505);
\draw [line width=2pt] (-6.034651315755617,1.8029131271093046)-- (-5.18,0.74);
\draw [line width=2pt] (-5.18,0.74)-- (-5.775413087363905,-0.3564659578188849);
\draw [line width=2pt] (-5.775413087363905,-0.3564659578188849)-- (-4.580154887565866,-0.5801328168713076);
\draw [line width=2pt] (-4.580154887565866,-0.5801328168713076)-- (-4.579630177676801,-1.768644034773291);
\draw [line width=2pt] (-4.579630177676801,-1.768644034773291)-- (-3.332965668433068,-1.3844791437428416);
\draw [line width=2pt] (-4.883286264880086,2.3707799814009505)-- (-4.994413929413232,3.675382933071262);
\draw [line width=2pt] (-4.994413929413232,3.675382933071262)-- (-3.8724287561210886,3.3864272043145647);
\draw [line width=2pt] (-3.8724287561210886,3.3864272043145647)-- (-3.1626675139868152,4.576386157347569);
\draw [line width=2pt] (-3.1626675139868152,4.576386157347569)-- (-1.98,3.64);
\draw [line width=2pt] (-1.98,3.64)-- (-0.9633232103380183,4.215882602131701);
\draw [line width=2pt] (-0.9633232103380183,4.215882602131701)-- (-0.7751850559511861,2.964409855262814);
\draw [line width=2pt] (-0.7751850559511861,2.964409855262814)-- (0.29543528182319223,3.089683865185845);
\draw [line width=2pt] (0.29543528182319223,3.089683865185845)-- (-0.12186712229154395,1.8931363358292463);
\draw [dotted] (-0.05786847274836271,0.579552856470155) -- (-0.779083029050043,-0.7406158557312259);
\draw [dotted] (-1.9344907301099419,-1.4006693023343526) -- (-0.779083029050043,-0.7406158557312259);
\draw [dotted] (-0.05786847274836271,0.579552856470155) -- (-0.12186712229154395,1.8931363358292463);
\draw [dotted] (-1.9344907301099419,-1.4006693023343526) -- (-3.332965668433068,-1.3844791437428416);
\begin{scriptsize}
\fill  (-1.9344907301099419,-1.4006693023343526) circle ;
\fill  (-0.05786847274836271,0.579552856470155) circle ;
\fill  (-0.5,-0.7406158557312259) circle ;
\fill  (-1.98,3.64) circle (2.5pt);
\draw  (-1.82,3.4) node {$z=0$};
\fill  (-5.18,0.74) circle (2.5pt);
\draw  (-4.4,0.8) node {$x-y=0$};
\fill  (-3.8724287561210886,3.3864272043145647) circle (2.5pt);
\draw  (-3.5,3.1) node {$y=0$};
\fill  (-4.883286264880086,2.3707799814009505) circle (2.5pt);
\fill (-4.1,2.41) node {$x-z=0$};
\fill  (-4.580154887565866,-0.5801328168713076) circle (2.5pt);
\draw (-3.7,-0.45) node {$x-\epsilon z=0$};
\fill  (-3.332965668433068,-1.3844791437428416) circle (2.5pt);
\draw (-3.18,-1.1) node {$x-\epsilon y=0$};
\fill  (-0.7751850559511861,2.964409855262814) circle (2.5pt);
\draw (-0.92,2.65) node {$x-\epsilon^iy=0$};
\fill  (-0.12186712229154395,1.8931363358292463) circle (2.5pt);
\draw (-1.1,2.0) node {$x-\epsilon^iz=0$};
\fill  (-4.994413929413232,3.675382933071262) circle (2.5pt);
\fill  (-3.1626675139868152,4.576386157347569) circle (2.5pt);
\fill  (-6.034651315755617,1.8029131271093046) circle (2.5pt);
\fill  (-5.775413087363905,-0.3564659578188849) circle (2.5pt);
\fill  (-4.579630177676801,-1.768644034773291) circle (2.5pt);
\fill  (-0.9633232103380183,4.215882602131701) circle (2.5pt);
\fill  (0.29543528182319223,3.089683865185845) circle (2.5pt);
\end{scriptsize}

    \end{tikzpicture}
				\caption{Levi Graph $G=C_{2(2i+4)}$ associated to the supersolvable line arrangement $\mathcal{L}$}\label{fig: levi graph for supersolvable line arrangements}
			\end{figure}

We show that $G$ contains an induced cycle of length $2(2i+3)$ for every $1\leq i\leq m-3$. For $i=1$, we consider the lines $\ell_1=L^0_{xy}$, $\ell_2=L_z$, $\ell_3=L^1_{xy}$, $\ell_4=L^0_{xz}$, $\ell_5=L^1_{yz}$. In addition, we consider the corresponding intersection points $p_{12}=(1,1,0)$, $p_{23}=(\epsilon,1,0)$, $p_{34}=(\epsilon,1,\epsilon)$, $p_{4,5}=(1,\epsilon,1)$ and $p_{15}=(\epsilon,\epsilon,1)$. Clearly, $p_{ii+1}\notin L_{j}$ for $j\in [1,\dots,5]\setminus \{i,i+1\}$ and $p_{15}\notin L_j$ for $[1,\dots,5]\setminus \{1,5\}$. Therefore, the lines $\ell_1,\dots,\ell_5$ with those intersection points correspond to an induced cycle $C_{2\cdot 5}$ of length $10$. 
We now show that there exists an induced cycle of length $2(2i+3)$ for $2\leq i\leq m-5$. For that,
we consider the lines $\ell_1=L^0_{xy}$, $\ell_2=L_z$, $\ell_3=L^1_{xy}$, $\ell_4=L^0_{xz}$, $\ell_5=L_y$, $\ell_6=L^1_{xz}$, $\ell_7=L^2_{yz}$,\dots, $\ell_{2i+2}=L^{i-1}_{xz}$, $\ell_{2i+3}=L^i_{yz}$ if $i$ is even and

$\ell_1=L^0_{xy}$, $\ell_2=L_z$, $\ell_3=L^1_{xy}$, $\ell_4=L^0_{xz}$, $\ell_5=L_y$, $\ell_6=L^1_{xz}$, $\ell_7=L^2_{yz}$,\dots, $\ell_{2i+2}=L^{i+2}_{xz}$, $\ell_{2i+3}=L^{i+2}_{yz}$ if $i$ is odd.
Note that if $m-5$ is even, then $\ell_{2(m-5)+2}=L^{m-6}_{xz},$ $\ell_{2(m-5)+3}=L^{m-5}_{yz}$ and if $m-5$ is odd, then $\ell_{2(m-5)+2}=L^{m-4}_{xz},$ $\ell_{2(m-5)+3}=L^{m-3}_{yz}$. We also consider the corresponding intersection points such that it gives an induced cycle of length $2(2i+3)$ for $1\leq i\leq m-5$.
For $i=m-4$ and $i=m-3$, it will heavily depend on even, odd properties.
\\ \noindent
\textbf{Case-I:} We assume that $m$ is even. Then $m-4$ is even and consider the above lines for $m-5$ being odd with $\ell_{2(m-4)+2}=L^{m-5}_{xz}$ and $\ell_{2(m-4)+3}=L^{m-4}_{yz}$. So, we are left to make an induced cycle of length $2(2i+3)$ for $i=m-3.$ We just add the line $L_x$ to the above set of lines for $m-5$ being odd and consider the following:
$\ell_1=L^0_{xy}$, $\ell_2=L_z$, $\ell_3=L^1_{xy}$, $\ell_4=L^0_{xz}$, $\ell_5=L_y$, $\ell_6=L^1_{xz}$, $\ell_7=L^2_{yz}$,\dots, $\ell_{2(m-6)+2}=L^{m-7}_{xz}$, $\ell_{2(m-6)+3}=L^{m-6}_{yz}$, $\ell_{2(m-5)+2}=L^{m-4}_{xz}$, $\ell_{2(m-5)+3}=L^{m-3}_{yz}$, $\ell_{2(m-4)+2}=L_x$, $\ell_{2(m-4)+3}=L^{m-4}_{yz}$, $\ell_{2(m-3)+2}=L^{m-6}_{xz}$, $\ell_{2(m-3)+3}=L^{m-5}_{yz}$.
\\ \noindent
\textbf{Case-II:} We assume that $m$ is odd. Then $m-5$ is even. For $i=m-4$, we consider the set of lines for $m-5$ and add two lines $\ell_{2(m-4)+2}=L_x$ and $\ell_{2(m-4)+3}=L^{m-3}_{yz}$. For $i=m-3$, we consider the set of lines for $m-5$ and add four lines $\ell_{2(m-4)+2}=L_x$, $\ell_{2(m-4)+3}=L^{m-6}_{yz}$, $\ell_{2(m-3)+2}=L^{m-4}_{xz}$ and $\ell_{2(m-3)+3}=L^{m-3}_{yz}$. Taking the corresponding intersection points, we can make an induced cycle $C_{2(2i+3)}$ for $2\leq i\leq m-3$.

We now show that there is no induced cycle of length $>2(2m-2)$ in the Levi graph $G$.
Suppose that there is an induced cycle of length $r>2(2m-2).$ Therefore, the number of lines must be $>2m-2$. If there is one $m-$ fold intersection point, then we should avoid the $m-2$ lines containing it. For example, if we take the lines $L_y$, $L_z$ and the corresponding $m$-fold intersection point $(1,0,0)$. Then we must avoid the lines $L^{i}_{yz}$ for $0\leq i\leq m-3$. In this case, if we can take all the other possible lines except $L_x$, then we can also take the number of lines $\leq 2m-2$. So, we can assume that all the intersection points are either double intersection points or triple intersection points.

\end{proof}
 \noindent
 $\mathbf{\mu_{\CLL}=2:}$

 \vskip 2mm
 
Let $\mathcal{A}$ be an $m$-homogeneous supersolvable line arrangement with $m\geq 3$, having $\mu_{\mathcal{A}}=2$ modular point. We define the line arrangement $\mathcal{A}(w,k)$ 
\begin{align}
\mathcal{A}(w,k) \ \ : \ \ \ xyz(x^{m-2}-y^{m-2})(x^{m-2}-z^{m-2})\prod_{j=1}^{k} (y-\epsilon^{i_j}z),
\end{align}
where $0\leq i_j\leq m-3$ and $\epsilon^{m-2}=1$. For $k=0$, we define $\mathcal{A}(w,0)$ to be $xyz(x^{m-2}-y^{m-2})(x^{m-2}-z^{m-2})$.

Then it follows from \cite[Theorem 1.3]{AD20} that there is a unique $k$ with $0\leq k\leq m-3$ such that $\widetilde{\mathcal{A}}$ and $\widetilde{\mathcal{A}(w,k)}$ are lattice-isotopic, where $\widetilde{\mathcal{A}}$ and $\widetilde{\mathcal{A}(w,k)}$ are corresponding central plane arrangements in $\mathbb{C}^3$. In particular, intersections lattices $L(\widetilde{\mathcal{A}})$ and $L(\widetilde{\mathcal{A}(w,k)})$ are isomorphic i.e. both $\widetilde{\mathcal{A}}$ and $\widetilde{\mathcal{A}(w,k)}$ have the same combinatorics.

 \begin{theorem}\label{thm: supersolvable k < m-2}
 Let $\mathcal{A}(w, k)$ be the line arrangements as described above. Then
 
 $(i)$ For $k=0$ the associated Levi graph of $\mathcal{A}(w, 0)$ contains induced cycles of length $2i$ for all $i \in \{2,4, \dots, 2m-2\} \cup \{3\}$ and the maximum length of an induced cycle is $2(2m-2)$.

 $(ii)$ For $k=1$ the associated Levi graph of $\mathcal{A}(w, 1)$ contains induced cycles of length $2i$ for all $ i \in \{2, \dots, 2m-2\}$ and the maximum length of an induced cycle is $2(2m-2)$.

 $(iii)$ For $ 2 \leq k \leq m-4$ the associated Levi graph of $\mathcal{A}(w, k)$ has induced cycles of length $2i$ for all $i \leq 2m$. Moreover, for $k=2,3,4$, the maximum length of an induced cycle is $4m$.

 \end{theorem}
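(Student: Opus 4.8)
The plan is to handle all three parts through one geometric picture of $\mathcal{A}(w,k)$ and then specialize. First I would fix coordinates and the labels of \eqref{notations lines supersolvable}, adjoin the $k$ lines $L_{yz}^{i_1},\dots,L_{yz}^{i_k}$, and classify $\sing(\mathcal{A}(w,k))$: the two modular $m$-fold points are $(0,0,1)$, lying on $L_x,L_y$ and all $L_{xy}^i$, and $(0,1,0)$, lying on $L_x,L_z$ and all $L_{xz}^i$; the point $(1,0,0)$ lies on $L_y,L_z$ and the $k$ lines $L_{yz}^{i_j}$, so it has multiplicity $k+2$ and is \emph{not} modular when $k\le m-3$; a grid point $L_{xy}^a\cap L_{xz}^b=[\ep^{a+b}:\ep^b:\ep^a]$ is a triple point exactly when $b-a\in\{i_1,\dots,i_k\}$ (there are $k(m-2)$ of these) and a double point otherwise, and the remaining intersections are double. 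The organizing observation is that every line passes through one of these three points, so the lines split into a column pencil through $(0,1,0)$, namely $\{L_{xz}^i,L_x,L_z\}$ ($m$ lines), a row pencil through $(0,0,1)$, namely $\{L_{xy}^i,L_x,L_y\}$ ($m$ lines), and the $C$-pencil $\{L_{yz}^{i_j}\}$.

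For the existence statements I would build cycles on the $(m-1)\times(m-1)$ \emph{extended grid} whose rows are $L_{xy}^0,\dots,L_{xy}^{m-3},L_y$ and whose columns are $L_{xz}^0,\dots,L_{xz}^{m-3},L_z$. Reusing the alternating construction from the $\mu_{\CLL}=3$ case, a closed rook path alternating rows and columns through the (generically double) grid points is an induced $\BFC_{2i}$; using $t$ rows and $t$ columns produces length $4t$, and since the alternation forces equally many rows and columns these cycles have an \emph{even} number $i=2t$ ($t\ge 2$) of lines, up to $i=2(m-1)$, i.e.\ length $2(2m-2)$. This yields the even values in part $(i)$, while the exceptional $\BFC_6$ comes from the triangle $L_z,\,L_y,\,L_{xz}^0$ through $(1,0,0),[1:0:1],(0,1,0)$. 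To reach the odd lengths and to push past $2(2m-2)$, I would splice in $L_x$ together with the available $L_{yz}^{i_j}$, joined at the double points $[0:\ep^{i_j}:1]=L_x\cap L_{yz}^{i_j}$ and at $(1,0,0)$: for $k=1$ the single extra line breaks the row/column parity and supplies the odd $i$ (part $(ii)$), and for $k\ge 2$ these insertions raise the maximum to $4m$ (part $(iii)$). The even/odd bookkeeping would be organized exactly as in Theorem \ref{thm: max t_3}.

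For the maximality assertions I would first invoke the earlier theorem that a supersolvable Levi graph has no induced cycle of length twice its number of lines. As $\mathcal{A}(w,k)$ has $2m-1+k$ lines, this alone settles the bound $2(2m-2)$ in part $(i)$ ($k=0$, $2m-1$ lines) and the bound $4m$ for $k=2$ ($2m+1$ lines). The remaining claims are sharper: for $k=1$ one must exclude cycles using all but one line, and for $k=3$ (resp.\ $k=4$) cycles using all but one (resp.\ all but one and all but two) lines. For these I would run the modularity argument against each modular point $p\in\{(0,0,1),(0,1,0)\}$: if an induced cycle omits only a few lines, then for almost every cycle vertex $q$ the join $\overline{pq}$ is a line of the arrangement, hence a present cycle line, which is either a chord through $q$ or forces $q$ onto one of the two cycle lines already meeting there. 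Combining this with the fact that each chosen $L_{yz}^{i_j}$ meets every row and column only at a triple point — so it cannot be adjacent in the cycle to both grid lines through such a point — should cap the number of usable lines at $2m$.

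The hard part, I expect, is precisely this near-maximal upper bound for $k=3,4$. The obstruction is a trade-off: every $L_{yz}^{i_j}$ one adds passes through $(1,0,0)$ and touches the two modular pencils only at triple points, so turning it into a genuine cycle vertex forces deletion of a grid line, while attaching it instead to $L_x$ or routing it through $(1,0,0)$ consumes the one vertex available at that point. Quantifying this exactly — showing that the lines contributed by the $C$-pencil together with $L_x$ can never more than offset the grid lines they destroy once more than a bounded number are used, and that the threshold genuinely differs for $k=2$, $k=3$ and $k=4$ — is where the three cases separate and is the main technical burden; the constructions, by contrast, should be routine adaptations of the $\mu_{\CLL}=3$ argument.
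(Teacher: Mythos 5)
Your setup (classification of the singular points, the two modular pencils, the grid picture) matches the paper's, and one step of your plan is actually cleaner than what the paper does: since $\mathcal{A}(w,0)$ has $2m-1$ lines and $\mathcal{A}(w,2)$ has $2m+1$ lines, the earlier theorem that a supersolvable arrangement admits no induced cycle through \emph{all} of its lines immediately caps the maximum at $2(2m-2)$ and $4m$ respectively, whereas the paper argues these two bounds directly via the pencils. Your even-length rook-path construction on the extended grid and the triangle $L_z,L_y,L_{xz}^0$ for $\BFC_6$ are also correct and essentially what the paper does for part $(i)$.

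However, there is a genuine gap: everything that does not follow from the Hamiltonian-cycle exclusion is left as a sketch. Concretely, (a) for $k=1$ you must rule out induced cycles on $2m-1$ of the $2m$ lines, and for $k=3$ (resp.\ $k=4$) on $2m+1$ (resp.\ $2m+1$ and $2m+2$) lines; you describe the right obstruction --- each $L_{yz}^{i_j}$ meets every grid line only at a triple point, so using it as a cycle line forces the exclusion of a grid line $L_{x*}^{j}$ --- but you never carry out the count showing that at least one, two, three grid lines respectively must be dropped, and you explicitly defer this as ``the main technical burden.'' This is precisely the content of the paper's case analysis (if $\ell_{v_1}=L_{yz}^{i_1}$ then the triple point $p_{(v_1)(v_1\pm1)}$ lies on some $L_{x*}^{j_3}$ which can then appear nowhere in the cycle, and distinct $L_{yz}^{i_j}$'s kill distinct grid lines), and without it parts $(ii)$ and the $k=3,4$ claims of $(iii)$ are unproved. (b) The existence constructions for odd lengths and for the $\BFC_{4m}$ when $k\geq 2$ are only gestured at via ``splicing in $L_x$ and the $L_{yz}^{i_j}$''; since each such insertion passes through triple points, one must exhibit the explicit line sequences and verify inducedness, as the paper does. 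As written, the proposal is a correct plan with the decisive combinatorial counting missing.
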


\begin{proof}
   Let us denote the associated Levi graphs of $\mathcal{A}(w, k)$ as $G_k$ and let $\ell_1p_{12}\ell_2 \dots \ell_{i-1}p_{{i-1}i}\ell_i$ corresponds to an induced cycle of length $2i$ in $G_k$ where $p_{ij} \in \sing(\mathcal{L}) \cap \ell_i \cap \ell_j $. We give the lines of $\mathcal{A}(w, k)$, the same notations as in (\ref{notations lines supersolvable}).
   
   $\textit{Proof of $(i)$}:$ The number of lines in $\mathcal{A}(w, 0)$ is $k= 2m-1$. We consider the lines: $\ell_1= L_{xy}^0, \ell_2=L_z, \ell_3= L_{xy}^1, \ell_4= L_{xz}^1, \dots, \ell_{2j+1} = L_{xy}^j, \ell_{2j+2} = L_{xz}^j$ for $1 \leq j \leq m-3$. Then $\ell_1p_{12}\ell_2 \dots \ell_{2i+2}p_{{2i+2}1}\ell_1$ corresponds to an induced cycle of length $2i$ for $i = 2j+2 =  2,3, 4, 6, \dots, 2m-4$.

   Next consider the lines: $\ell_{2m-3}= L_y, \ell_{2m-2} = L_{xz}^0$. Then $\ell_1p_{12}\ell_2 \dots \ell_{2m-2}p_{{2m-2}1}p_1$ corresponds to an induced cycle of length $2(2m-2)$. 

  Now, we claim that the maximum length of an induced cycle in $G_0$ is $2(m-2)$. If not, suppose there is an induced cycle of length $2(2m-1)$. Then $\ell_{i_1} = L_x$ for some $i_1\in \{1, \dots 2m-1\}$. In that case, there would be four choices for $\ell_{{i_1}+1}$, namely $L_{xy}^{j_1}, L_{xz}^{j_2}, L_y$ and $L_z$ for some $j_1, j_2 \in \{0, \dots m-3\}$. If $\ell_{{i_1}+1} = L_{xy}^{j_1}$, we have $p_{{i_1}{{i_1}+1}} = (0,0,1)$ which implies that $\{L_{xy}^j, j \neq j_1\} \cap \{ \ell_i, i \neq i_1+1\} = \emptyset$. Hence, we get an induced cycle of length much lower that $2(2m-1)$. Similar conclusions can be made when $\ell_{{i_1}+1} = L_{xz}^{j_2}, L_y$ or $L_z$. So, there is no induced cycle of length $2(m-1)$ in $G_0$.

  Next, we show that there are no induced cycles of length $2i$ in $G_0$ if $i$ is odd and $i \neq 3$. First consider the following two sets:
  \begin{align*}
      S_1:= \{L_z; L_{xy}^j, \, \, 0 \leq j \leq m-3 \} \\
      S_2:= \{L_y; L_{xz}^j, \, \, 0 \leq j \leq m-3 \}.
  \end{align*}
  Intersection point of any two lines from $S_1$ is $(0,0,1)$. Let $\ell_{v_1} = L_{xy}^{j_1}$ and $\ell_{v_2} = L_{xy}^{j_2}$ for some $v_1, v_2 \in \{1, \dots i\}$ and for some $j_1, j_2 \in \{0, \dots , m-3\}$. Then $p_{v_1{{v_1}+1}} = (0,0,1)$. Also, observe that $\{\ell_i, i \neq v_1, v_{i+1} \} \cap S_2 = \emptyset$ and $\{\ell_i, i \neq v_1, v_{i+1} \} \cap S_1 \setminus \{L_{xy}^{j_1}, L_{xz}^{j_2}\} = \emptyset$. so,$\{j_1, j_2\} = \{1,2\}$ and the length of the induced cycle is $4$. Similar conclusions can be made if $\ell_{v_1} = L_z, \ell_{v_2} = L_{xy}^{j_1}$ for some $j_1 \in \{0, \dots, m-3\}$ or $\ell_{v_1}, \ell_{v_2} \in S_2$. 
  
  Hence, if $\ell_{v_1} \in S_1$, $\ell_{{v_1}+1} \in S_2$ for any $v_1 \in \{1, \dots, i\}$. Therefore, $G_0$ does not contain any induced cycle of length $2i$ when $i$ is odd and not equal to three.
  
\vskip 2mm

  \textit{Proof of $(ii)$}: For $k=1$ we choose $i_1=1$ in $(16)$ so that 
  \begin{align*}
  \mathcal{A}(x,1)= xyz(x^{m-2}-y^{m-2})(x^{m-2}-z^{m-2})(y - \epsilon z).
  \end{align*}
   The cases where $i_1 \neq 1$ can be proved similarly. We consider the lines: $\ell_1= L_y, \ell_2 = L_{yz}^1, \ell_3 = L_{xy}^0, \ell_4 = L_{xz}^0, \ell_5 = L_{xy}^1, \ell_6 = L_z, \ell_7 = L_{xy}^2, \ell_8 = L_{xz}^2, \dots, \ell_{2j+3} = L_{xy}^j, \ell_{2j+4} = L_{xz}^j$ for $2 \leq j \leq m-3$. Then $\ell_1p_{12}\ell_2 \dots \ell_{2j+4}p_{(2j+4)1}p_1$ corresponds to an induced cycle of length $2i$ for $i$ even and $ 8 \leq i = 2j+4 \leq 2m-2$. For $i=6$, $\ell_1p_{12}\ell_2 \dots, \ell_5p_{5(10)}\ell_{10}p_{(10)1}\ell_1$ corresponds to an induced cycle of length $12$ in $G_1$.
  
  Now to show that $G_1$ contains induced cycles of length $2i$ for $i$ odd, we consider the lines: $\ell_1 = L_{xy}^0, \ell_2 = L_{yz}^1, \ell_3 = L_{xz}^0, \ell_4 = L_{xy}^1, \ell_5 = L_z, \ell_6 = L_{xy}^2, \ell_7 = L_{xz}^2, \dots \ell_{2j+2} = L_{xz}^j, \ell_{2j+3} = L_{xy}^j$ for $ 2 \leq j \leq m-4$. Then $\ell_1p_{12}\ell_2 \dots \ell_{2j+3}p_{(2j+3)1}\ell_1$ corresponds to an induced cycle of length $2i$ for $i$ odd and $ 7 \leq i = 2j+3 \leq 2m-5$. For $i=5$ $\ell_1p_{12}\ell_2 \dots \ell_5p_{51}\ell_1$ corresponds to an induced cycle of length $10$ in $G_1$.

Next consider the lines: $\ell_{2m-4} = L_y, \ell_{2m-3} = L_{xz}^{m-3}$. Then,
\begin{align*}
\ell_1p_{12}\ell_2 \dots \ell_{2m-5}p_{(2m-5)(2m-4)}\ell_{2m-4}p_{(2m-4)(2m-3)}\ell_{2m-3}p_{(2m-3)1}p_1\ell_1
\end{align*}
corresponds to an induced cycle of length $2(2m-3)$.

Hence, $G_1$ contains induced cycles of length $2i$ for all $i \leq 2m-2$.

Now we show that maximum length of an induced cycle in $G_1$ is $2(2m-2)$. If $\ell_i \neq L_{yz}^1$ for $i \in \{1, \dots, m-3\}$, then by $(i)$ we see that the maximum length of an induced cycle in $G_1$ is $2(m-2)$. So, let $\ell_{v_1} = L_{yz}^1$ for some $v_1 \in \{1, \dots, m-3\}$. If $\ell_{v_1 -1}, \ell_{v_1+1} \in S_1$, then there exists at least one $j_1$ such that $\{ \ell_i, i \neq i_1-1, i_1, i_1+1\} \cap L_{xz}^{j_1} = \emptyset$. If $\ell_{v_1-1}\ell_{v_1+1} \in S_2$ or $\ell_{v_1-1} \in S_1, \ell_{v_1+1} \in S_2$, conclusions are same.
Also, $\{ \ell_v, \, \, 1 \leq v \leq i \}\cap L_x = \emptyset$ for the same reason as described in $(i)$.

Hence, maximum length of an induced cycle in $G_1$ is $2(2m-2)$. 

\vskip 2mm 
\textit{Proof of $(iii)$}: For $k=2$ we choose $i_1 =1$ and $i_2=2$ so that 
\begin{align*}
  \mathcal{A}(x,2)= xyz(x^{m-2}-y^{m-2})(x^{m-2}-z^{m-2})(y - \epsilon z)(y - \epsilon^2z).
  \end{align*}

  We consider the lines: $\ell_1 = L_{xy}^0, \ell_2= L_{yz}^2, \ell_3 = L_x, \ell_4 = L_{yz}^1, \ell_5 = L_{xy}^1, \ell_6 = L_{xz}^1, \ell_7 = L_{xy}^2, \ell_8 = L_z, \ell_9 = L_{xy}^3, \ell_{10} = L_{xz}^3, \dots , \ell_{2j+3} = L_{xy}^j, \ell_{2j+4} = L_{xz}^j$ for $3 \leq j \leq m-3$. Then $\ell_1p_{12}\ell_2 ... \ell_{2j+4} p_{{2j+4}1}\ell_1$ corresponds to an induced cycle of length $2i$ for $9 \leq i= 2j+4 \leq 2m-2$ and $i$ even. 
  
  To show that $G_2$ contains an induced cycle of length $4m$, consider the lines: $\ell_{2m-1} = L_y, \ell_{2m} = L_{xz}^0$. Then, $\ell_1p_{12}\ell_2 \dots \ell_{2m}p_{(2m)1}\ell_1$ corresponds to an induced cycle of length $4m$ in $G_2$.

  Note that, there exists $j_1, j_2 \in \{1, \dots, m-3\}$ such that 
  \begin{align*}
  p_{(2j+3)(2j+4)} = \sing(\mathcal{L}) \cap L_{xy}^j \cap L_{xz}^j \cap L_{yz}^{j_1}, \\ 
  p_{(2j+2)(2j+3)} = \sing(\mathcal{L}) \cap L_{xz}^{j-1} \cap L_{xy}^j \cap L_{yz}^{j_2}
  \end{align*}
  
  for all $3 \leq j \leq m-3$. Also, 
  \begin{align*}
  p_{56} = \sing(\mathcal{L}) \cap L_{xy}^1 \cap L_{xz}^1 \cap L_{yz}^{j_1},\\
  p_{(2m)1} = \sing(\mathcal{L}) \cap L_{xz}^0 \cap L_{xy}^0 \cap L_{yz}^0, \\
  p_{67} = \sing(\mathcal{L}) \cap L_{xz}^1 \cap L_{xy}^2 \cap L_{yz}^{j_2}.
  \end{align*}

  Hence, we can conclude that $\ell_1p_{12}\ell_2 ... \ell_i p_{i1}\ell_1$ corresponds to an induced cycle of length $2i$ in $G_k$ for $2 \leq i \leq 2m$ and $i$ even, for $2 \leq k \leq m-4$.

  Now from the proof $(ii)$ we get induced cycles of length $2i$ in $G_2$ for $i$ odd and $3 \leq i \leq 2m-3$.

Now, we show that maximum length of an induced cycle in $G_2$ is $4m$. If $\{ L_{yz}^1, L_{yz}^2 \} \not \subset \{\ell_v : 1 \leq v \leq i \}$, we can go back to $(i)$ and $(ii)$ and conclude that maximum length of an induced cycle in $G_2$ is strictly less than $4m$. Suppose there exists a $v_1 \in \{1, \dots , i\}$ such that $\ell_{v_1} = L_{yz}^1$. If $\ell_{{v_1}+1} = L_x$ and  $\ell_{{v_1}+2} = L_{yz}^2$, then $\ell_{{v_1}+3} = L_{xy}^{j_1}$ or $L_{xz}^{j_2}$ for some $j_1, j_2 \in \{1, \dots, m-3\}$. If $\ell_{{v_1}+3} = L_{xy}^{j_1}$ then there exists a $j_3 \in \{1, \dots, m-3\}$ such that $p_{({v_1}+2)({v_1}+3)} \in \sing(\mathcal{L}) \cap L_{yz}^2 \cap L_{xy}^{j_1} \cap L_{xz}^{j_3}$. Hence, $\{ \ell_v: 1 \leq v \leq i\} \cap L_{xz}^{j_3} = \emptyset$ and maximum length of an induced cycle is $\leq 4m$. But we have already shown the existence of an induced cycle of length $4m$. So, maximum length of an induced cycle in $G_2$ is $4m$. Now, if  $\ell_{{v_1}+3} = L_{xz}^{j_2}$ or $\ell_{{v_1}+1} \neq  L_x$, we can make similar conclusions. Finally observe that for $(i_1, i_2) \neq (1,2)$ we will have a similar proof and same conclusion.

For $k=3,4$, as before we assume that $(i_1, \dots, i_4) = (1, \dots, 4)$. The proof is similar when $(i_1, \dots, i_4) \neq  (1, \dots, 4)$. 

We claim that maximum length of an induced cycle in $G_3$ and $G_4$ is $4m$. First we prove the claim for $G_3$. If $ ( L_{yz}^1, \dots, L_{yz}^3) \not \subset \{\ell_v : 1 \leq v \leq i \}$ we can consider the induced cycles of $G_3$ as induced cycles in $G_0, G_1$ or $G_2$ and can conclude that maximum length of an induced cycle is $\leq 4m$. So, assume that $ ( L_{yz}^1, \dots, L_{yz}^3) \subset \{\ell_v : 1 \leq v \leq i \}$. Let, $\ell_{v_1} = L_{yz}^{i_1}, \ell_{{v_1}+1} = L_x$ and $\ell_{{v_1}+2} = L_{yz}^{i_2}$ for some $v_1 \in \{1, \dots, i\}$. Then $\ell_{{v_1}+3} = L_{x*}^{j_1}$ for $* \in \{y,z\}$ and for some $j_1 \in \{0, \dots, m-3\}$. So, there exists $j_2 \in \{0, \dots, m-3 \}$ such that $L_{x*}^{j_3} \cap \{ \ell_v : 1 \leq v \leq i \} = \emptyset$, where $* \in \{y,z\}$. Next let $\ell_{v_2} = L_{yz}^{i_3}$ for some $v_2 \in \{1, \dots, i\}$ with $v_2 \in \{ v_1, v_1 +1, \dots, v_1+3\}$. Then $\ell_{{v_2} -1}$ or $\ell_{{v_2}+1}$  is equal to  $L_{x*}^{j_3}$ for $* \in \{y,z\}$ and for some $j_3 \in \{0, \dots, m-3\}$ with $j_3 \neq j_1, j_2$. So, there exists $j_4 \in \{0, \dots, m-3\}$ such that $L_{x*}^{j_4} \cap \{ \ell_v : 1 \leq v \leq i \} = \emptyset$. Hence 
\begin{align*}\label{thm ex 2}
\{L_{x*}^{j_3}, L_{x*}^{j_4}\} \cap \{ \ell_v : 1 \leq v \leq i \} = \emptyset. 
\end{align*}

Now assume that $\ell_{{v_1}+1} \neq L_x$. In that case too, there exists at least two $j_3, j_4 \in \{0, \dots, m-3 \}$ such that $\{L_{x*}^{j_3}, L_{x*}^{j_4} \} \cap \{\ell_v : 1 \leq v \leq i \} = \emptyset$. Hence, maximum length of an induced cycle in $G_3$ is $\leq 2m$. As we have already shown an induced cycle of length $2m$ in $G_3$, maximum length of an induced cycle in $G_3$ is $2m$.

For $k=4$ similarly we can show that there exists at least three lines $L_{x*}^{j_1}, L_{x*}^{j_2}, L_{x*}^{j_3}$ such that $\{ L_{x*}^{j_1}, L_{x*}^{j_2}, L_{x*}^{j_3} \} \cap \{\ell_v : 1 \leq v \leq i \} = \emptyset$ for $j_1, \dots, j_3 \in \{0, \dots, m-3 \}$ and $* \in \{y,z\}$. Hence, maximum length of an induced cycle in $G_4$ is $2m$.

\end{proof}

\bibliographystyle{plain}
\bibliography{Reference}
\end{document}